\begin{document}

\theoremstyle{plain}
\newtheorem{thm}{Theorem}[section]
\newtheorem{theorem}[thm]{Theorem}
\newtheorem{lemma}[thm]{Lemma}
\newtheorem{corollary}[thm]{Corollary}
\newtheorem{proposition}[thm]{Proposition}
\newtheorem{addendum}[thm]{Addendum}
\newtheorem{variant}[thm]{Variant}
\newtheorem*{thm0}{Theorem}
\newtheorem*{thm35}{Theorem 3.5}
\theoremstyle{definition}
\newtheorem{construction}[thm]{Construction}
\newtheorem{notations}[thm]{Notations}
\newtheorem{question}[thm]{Question}
\newtheorem{problem}[thm]{Problem}
\newtheorem{remark}[thm]{Remark}
\newtheorem{remarks}[thm]{Remarks}
\newtheorem{definition}[thm]{Definition}
\newtheorem{claim}[thm]{Claim}
\newtheorem{assumption}[thm]{Assumption}
\newtheorem{assumptions}[thm]{Assumptions}
\newtheorem{properties}[thm]{Properties}
\newtheorem{example}[thm]{Example}
\newtheorem{conjecture}[thm]{Conjecture}
\newtheorem{sublemma}[thm]{Sublemma}
\numberwithin{equation}{thm}
\newcommand{\pP}{{\mathfrak p}}
\newcommand{\sA}{{\mathcal A}}
\newcommand{\sB}{{\mathcal B}}
\newcommand{\sC}{{C}}
\newcommand{\sD}{{\mathcal D}}
\newcommand{\sE}{{\mathcal E}}
\newcommand{\sF}{{\mathcal F}}
\newcommand{\sG}{{\mathcal G}}
\newcommand{\sH}{{\mathcal H}}
\newcommand{\sI}{{\mathcal I}}
\newcommand{\sJ}{{\mathcal J}}
\newcommand{\sK}{{\mathcal K}}
\newcommand{\sL}{{\mathcal L}}
\newcommand{\sM}{{\mathcal M}}
\newcommand{\sN}{{\mathcal N}}
\newcommand{\sO}{{\mathcal O}}
\newcommand{\sP}{{\mathcal P}}
\newcommand{\sQ}{{\mathcal Q}}
\newcommand{\sR}{{\mathcal R}}
\newcommand{\sS}{{\mathcal S}}
\newcommand{\sT}{{\mathcal T}}
\newcommand{\sU}{{\mathcal U}}
\newcommand{\sV}{{\mathcal V}}
\newcommand{\sW}{{\mathcal W}}
\newcommand{\sX}{{\mathcal X}}
\newcommand{\sY}{{\mathcal Y}}
\newcommand{\sZ}{{\mathcal Z}}
\newcommand{\A}{{\mathbb A}}
\newcommand{\B}{{\mathbb B}}
\newcommand{\C}{{\mathbb C}}
\newcommand{\D}{{\mathbb D}}
\newcommand{\E}{{\mathbb E}}
\newcommand{\F}{{\mathbb F}}
\newcommand{\G}{{\mathbb G}}
\renewcommand{\H}{{\mathbb H}}
\newcommand{\I}{{\mathbb I}}
\newcommand{\J}{{\mathbb J}}
\renewcommand{\L}{{\mathbb L}}
\newcommand{\M}{{\mathbb M}}
\newcommand{\N}{{\mathbb N}}
\renewcommand{\P}{{\mathbb P}}
\newcommand{\Q}{{\mathbb Q}}
\newcommand{\R}{{\mathbb R}}
\newcommand{\SSS}{{\mathbb S}}
\newcommand{\T}{{\mathbb T}}
\newcommand{\U}{{\mathbb U}}
\newcommand{\V}{{\mathbb V}}
\newcommand{\W}{{\mathbb W}}
\newcommand{\Z}{{\mathbb Z}}
\newcommand{\g}{{\gamma}}
\newcommand{\bb}{{\beta}}
\newcommand{\as}{{\alpha}}
\newcommand{\id}{{\rm id}}
\newcommand{\rk}{{\rm rank}}
\newcommand{\END}{{\mathbb E}{\rm nd}}
\newcommand{\End}{{\rm End}}
\newcommand{\Hom}{{\rm Hom}}
\newcommand{\Hg}{{\rm Hg}}
\newcommand{\tr}{{\rm tr}}
\newcommand{\Sl}{{\rm Sl}}
\newcommand{\Gl}{{\rm Gl}}
\newcommand{\Cor}{{\rm Cor}}
\newcommand{\HIG}{\mathrm{HIG}}
\newcommand{\MIC}{\mathrm{MIC}}
\newcommand{\Res}{\textrm{Res}}
\newcommand{\Pic}{\textrm{Pic}}
\newcommand{\Spec}{\textrm{Spec}}
\newcommand{\SO}{{\rm SO}}
\newcommand{\OO}{{\rm O}}
\newcommand{\SP}{{\rm SP}}
\newcommand{\Sp}{{\rm Sp}}
\newcommand{\UU}{{\rm U}}
\newcommand{\SU}{{\rm SU}}
\newcommand{\SL}{{\rm SL}}
\newcommand{\tL}{{\mathscr L}}
\newcommand{\Ext}{{\rm{Ext}}}


\title[]{Uniformization of $p$-adic curves via Higgs-de Rham flows}

\author[G.-T. Lan]{Guitang Lan}

\author[M. Sheng]{Mao Sheng}
\email{msheng@ustc.edu.cn}
\address{School of Mathematical Sciences,
University of Science and Technology of China, Hefei, 230026, China}
\author[Y.-H. Yang]{Yanhong Yang}
\author[K. Zuo]{Kang Zuo}
\email{lan@uni-mainz.de, yangy@uni-mainz.de, zuok@uni-mainz.de}
\address{Institut f\"{u}r  Mathematik, Universit\"{a}t
Mainz, Mainz, 55099, Germany}
\thanks{This work is supported by the SFB/TR 45 ``Periods, Moduli Spaces and Arithmetic of Algebraic Varieties" of the DFG. The second named author is supported by National Natural Science Foundation of China (Grant No. 11471298, No. 11526212.)}

\begin{abstract}
Let $k$ be an algebraic closure of a finite field of odd characteristic. We prove that for any rank two graded Higgs bundle with maximal Higgs field over a generic hyperbolic curve $X_1$ defined over $k$, there exists a lifting $X$ of the curve to the ring $W(k)$ of Witt vectors as well as a lifting of the Higgs bundle to a periodic Higgs bundle over $X/W(k)$. These liftings  give rise to a two-dimensional absolutely irreducible representation of the arithmetic fundamental group $\pi_1(X_K)$ of the generic fiber of $X$. This curve $X$ and its associated representation is in close relation to the canonical curve and its associated canonical crystalline representation in the $p$-adic Teichm\"{u}ller theory for curves due to S. Mochizuki. Our result may be viewed as an analogue of the Hitchin-Simpson's uniformization theory of hyperbolic Riemann surfaces via Higgs bundles.
\end{abstract}

\maketitle

\tableofcontents
\section{Introduction}

The work of Deninger-Werner \cite{DW} initiated the problem of associating representations of geometric fundamental groups of $p$-adic curves to vector bundles with suitable conditions. Their result can be viewed as a partial analogue of the classical Narasimhan-Seshadri theory of vector bundles on
compact Riemann surfaces. Later, Faltings \cite{Fa}, using the theory of almost \'{e}tale extensions in his work in the $p$-adic Hodge theory, obtained a far-reaching generalization. In particular, as a $p$-adic analogue of Simpson's theory in the nonabelian Hodge theory, Faltings associates generalized representations to Higgs bundles over $p$-adic curves. However, a fundamental question concerning semistability of Higgs bundles remains: Faltings asked whether semistable Higgs bundles of degree zero over a smooth projective $p$-adic curve corresponds to usual (i.e. continuous $\C_p$-) representations of the geometric fundamental group.

Reading the foundational paper \cite{Hi} of Hitchin in the nonabelian Hodge theory, one finds, on the other hand, that even the question that whether a very basic graded Higgs bundle of the following type corresponds a usual representation is unknown:
\begin{align}\label{Higgsbd0}
&(E,\theta)\ :=\ (L\oplus L^{-1},\theta), \\
&\text{
where }L \text{
is a line bundle over } X \text{ satisfying  }L^2\cong \Omega^1_{X}, \nonumber \\
&\text{
and }\theta: L \to L^{-1}\otimes\Omega^1_{X}\text{ is
the tautological  isomorphism.}\nonumber
 \end{align}

For $X$ being a compact Riemann surface, this is Example 1.5 \cite{Hi} (Example 1.4 relates to the theory of Narasimhan-Seshadri on stable vector bundles). Such a Higgs bundle is considered to be basic because, as shown by Hitchin, it gives a uniformization for a hyperbolic $X$. Simpson \cite{Si} extended the theory to a noncompact Riemann surface by introducing the logarithmic version of the previous Higgs bundles, namely
\begin{align}\label{Higgsbdl}
&(E,\theta)\ :=\ (L\oplus L^{-1},\theta), \\
&\text{
where }L \text{
is a line bundle over } X \text{ satisfying  }L^2\cong \Omega^1_{X}(\log
D), \nonumber \\
&\text{
and }\theta: L \to L^{-1}\otimes\Omega^1_{X}(\log D) \text{ is
a  tautological  isomorphism,}\nonumber
\end{align}
where $X$ is smooth projective and $D\subset X$ a reduced divisor. This paper is devoted to study this type of graded (logarithmic) Higgs bundles in the context of the $p$-adic Hitchin-Simpson correspondence established in \cite{LSZ}. For a rank two graded logarithmic Higgs bundle with trivial determinant, the above type is characterized by the Higgs field being an isomorphism and it is said to have \emph{maximal Higgs field}. Note that examples of the above type are typically Higgs stable and are of degree zero. Recall that in the complex case, a Higgs bundle with maximal Higgs field \eqref{Higgsbd0} is used to recover the uniformization theorem of a compact Riemann surface by solving the Yang-Mills-Higgs equation. Hitchin observed that the unique solution to
the Yang-Mills-Higgs equation associates to $(E,\theta)$  defined in \eqref{Higgsbd0} over $X$  a
polarized $\mathbb C$-variation of Hodge structure $(H,\nabla,
Fil , \Psi)$ (in fact it carries also a real structure), where $H$ is the underlying
$C^\infty$-bundle of $E$ with a new holomorphic
structure, $\nabla$ is an integrable connection
$$\nabla: H\to H\otimes\Omega^1_{X},$$
$Fil$ is a Hodge filtration, that is, a finite decreasing filtration satisfying Griffiths transversality and $\Psi$ is a horizontal
bilinear form satisfying the Hodge-Riemann bilinear relation. By taking the
grading of the Hodge filtration, one obtains that the associated
graded Higgs bundle $Gr_{Fil }(H,\nabla)$ is isomorphic to  $(E,\theta)$.
Moreover, the classifying map associated to $(H,\nabla,
Fil, \Psi)$ is a holomorphic map
$$\pi: \tilde X\to \mathcal H $$ from the universal cover $\tilde X$ of $X$ to the classifying space of rank two polarized
$\mathbb R$-Hodge structures of weight one, which turns out to be the
upper half plane $\mathcal H$. As the derivative of $\pi$ can be
identified with $\theta$ via the grading
$Gr_{Fil}(H,\nabla)$,  one knows that $\pi$ is an
isomorphism. This illustrates the approach to the uniformization
theorem from the point of view of Higgs bundles.

Turn to the $p$-adic case. Let $k:= \bar{\mathbb{F}}_p$ be an algebraic closure of a finite field of odd characteristic $p$,  $W:=W(k)$ the ring of Witt vectors with coefficients in $k$, and $K$ its fraction field, $\bar{K}$ an algebraic closure of $K$. Based on the fundamental work of Ogus-Vologodsky \cite{OV} on nonabelian Hodge theory in characteristic  $p$, we introduced in \cite{LSZ} the notion of a strongly semistable Higgs bundle, generalizing the notion of a strongly semistable vector bundle which played an important role in the work \cite{DW}, and a characteristic  $p$/$p$-adic analogue of Yang-Mills-Higgs flow whose ``limit" can be regarded as a characteristic $p$/$p$-adic analogue of the solution of the Yang-Mills-Higgs equation, whose definition is recalled as follows.

Let $X_1$ be a smooth projective variety over $k$ together with a simple normal crossing divisor $D_1\subset X_1$ such that $(X_1,D_1)$ is $W_2(k)$-liftable. Fix a smooth projective scheme $X_2$ over $W_2:=W_2(k)$ together with a divisor $D_2\subset X_2$ relative to $W_2(k)$ whose reduction mod $p$ is $(X_1,D_1)$. Let $C_1^{-1}$ be the inverse Cartier transform of Ogus-Vologodsky from the category of nilpotent logarithmic Higgs module of exponent $\leq p-1$ to the category of nilpotent logarithmic flat module of exponent $\leq p-1$ with respect to the chosen $W_2$-lifting $(X_2,D_2)$. We refer the reader to \cite{LSZ0} for an elementary approach to the construction of the inverse Cartier/Cartier transform in the case $D_1=\emptyset$ and the Appendix in a special log case. A Higgs-de Rham flow over $X_1$ is a diagram of the following form:
$$
\xymatrix{
                &  (H_0,\nabla_0)\ar[dr]^{Gr_{Fil_0}}       &&  (H_1,\nabla_1)\ar[dr]^{Gr_{Fil_1}}    \\
 (E_0,\theta_0) \ar[ur]^{C_1^{-1}}  & &     (E_1,\theta_1) \ar[ur]^{C_1^{-1}}&&\ldots       }
$$
where the initial term $(E_0,\theta_0)$ is a nilpotent graded Higgs bundle with exponent $\leq p-1$; for $ i\geq 0$, $Fil_i$ is a Hodge filtration on the flat bundle $C_1^{-1}(E_i,\theta_i)$ of level $\leq p-1$; for $i\geq 1$,  $(E_i,\theta_i)$ is the graded Higgs bundle associated with the de Rham bundle $(C_1^{-1}(E_{i-1},\theta_{i-1}),Fil_{i-1})$. A Higgs-de Rham flow is said to be \emph{periodic} of period $f\in \N$ if $f$ is the minimal integer such that there exists an isomorphism of graded Higgs modules
$\phi: (E_f,\theta_f)\cong (E_0,\theta_0)$. A (logarithmic) Higgs bundle is said to be periodic if it initiates a periodic Higgs-de Rham flow.
There are several points we want to emphasize in the above definition of a (periodic) flow: first, the Hodge filtrations in the above diagram do not come from the inverse Cartier transform, but rather are a part of the defining data  of a flow; second, the choice of the isomorphism $\phi$ is also a part of the defining data of a periodic flow; third, since the inverse Cartier transform does depend on the choice of a $W_2$-lifting of $(X_1,D_1)$, a periodic Higgs bundle only makes sense after a $W_2$-lifting is specified, although the Higgs bundle itself is just defined over $k$. To define the notion of a periodic Higgs-de Rham flow over a truncated Witt ring, one needs to lift the inverse Cartier transform of Ogus-Vologodsky which has been partially realized in \cite{LSZ}. We refer the reader to Section 5 in that work for details. It is quite straightforward to generalize one of main results \cite[Theorem 1.6]{LSZ} to the following logarithmic case, which shall provide us with the basic device for producing representations in this paper.
\begin{theorem}
Let $Y$ be a smooth projective scheme over $W$ with a simple normal crossing divisor $D\subset Y$ relative to $W$. Then for each natural number $f\in \N$, there is an equivalence of categories between the category of strict $p^n$-torsion logarithmic Fontaine modules (with pole along $D\times W_n\subset Y\times W_n$) with endomorphism structure of $W_n(\F_{p^f})$ whose Hodge-Tate weight $\leq p-2$ and the category of periodic Higgs-de Rham flows over $Y\times W_n$ whose periods are divisors of $f$ and exponents of nilpotency are $\leq p-2$, where $W_n(\F_{p^f})$ (resp. $W_n$) is the truncated Witt ring $W(\F_{p^f})/p^n$ (resp. $W/p^n$) with coefficients in $\F_{p^f}$ (resp. $k$).
\end{theorem}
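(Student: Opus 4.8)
The plan is to transcribe the proof of \cite[Theorem 1.6]{LSZ}, replacing each ingredient by its logarithmic analogue and using the logarithmic inverse Cartier transform (Ogus--Vologodsky modulo $p$, and its lift over $W_n$ constructed as in \cite[\S5]{LSZ}) whose realization in the simple normal crossing case is the object of the Appendix. I would first build the functor from Fontaine modules to flows. Given a strict $p^n$-torsion logarithmic Fontaine module $(V,\nabla,Fil,\Phi)$ with endomorphism structure $W_n(\F_{p^f})$ and Hodge--Tate weights $\leq p-2$, set $(E_0,\theta_0):=Gr_{Fil}(V,\nabla)$, the logarithmic associated graded à la Simpson; since $Fil$ has length $\leq p-1$, the Higgs field $\theta_0$ is nilpotent of exponent $\leq p-2$, so $C_1^{-1}(E_0,\theta_0)$ is defined, and the strong divisibility of $\Phi$ furnishes an isomorphism of $C_1^{-1}(E_0,\theta_0)$ with a Frobenius twist of $(V,\nabla)$, onto which the Hodge filtration of $V$ transports to a level $\leq p-2$ Griffiths-transverse filtration $Fil_0$. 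Iterating produces a logarithmic Higgs--de Rham flow, and the $W_n(\F_{p^f})$-structure---because the Frobenius of $\F_{p^f}$ has order $f$---forces it to be periodic of period dividing $f$. Conversely, a periodic flow of period $f'\mid f$ reconstructs, from the $f'$-fold composite $C_1^{-1}\circ Gr_{Fil}\circ\cdots$ together with the periodicity isomorphism $\phi$, a logarithmic de Rham module carrying a strongly divisible Frobenius structure and a natural $W_n(\F_{p^{f'}})$-action, from which a $W_n(\F_{p^f})$-structure is obtained by extension of scalars along $W_n(\F_{p^{f'}})\hookrightarrow W_n(\F_{p^f})$.

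Next I would check that the two functors are quasi-inverse. This is formal once three facts are in place: (i) $Gr_{Fil}$ and the logarithmic $C_1^{-1}$ are compatible exactly as in the projective case, so that, starting from $(V,\nabla,Fil)$, applying $C_1^{-1}\circ Gr_{Fil}$ and transporting the filtration along $\Phi$ returns the same datum, and all functors involved respect the endomorphism structure; (ii) the logarithmic inverse Cartier transform is an equivalence from nilpotent logarithmic Higgs modules of exponent $\leq p-1$ onto nilpotent logarithmic flat modules of exponent $\leq p-1$, with quasi-inverse the logarithmic Cartier transform, and this equivalence is exact and compatible with reduction modulo $p$; and (iii) the strict $p^n$-torsion condition on $\Phi$ is precisely what guarantees that the filtrations produced along the flow stay of level $\leq p-2$ and Griffiths-transverse. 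As in \emph{loc. cit.}, the weight bound $p-2$ (rather than $p-1$) is forced exactly here: one needs a unit of slack between the length of the Hodge filtration and the range in which $C_1^{-1}$ and its inverse are defined.

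The main obstacle is the logarithmic inverse Cartier transform over the truncated Witt rings $W_n$ together with the control of the residues along $D$. One has to check that the construction of \cite[\S5]{LSZ}, which lifts Ogus--Vologodsky, goes through verbatim in the presence of a simple normal crossing pole---in particular that it remains exact, compatible with Frobenius descent and with the grading by a Hodge filtration, and that it sends a logarithmic Higgs module with nilpotent residues to a logarithmic flat module whose residues are again nilpotent of controlled exponent. Keeping these residual data inside the range where the equivalence holds, as one iterates the construction defining the flow, is the real technical content; once the needed properties of the logarithmic $C_1^{-1}$ are isolated and established in the Appendix, the remainder is a line-by-line copy of the non-logarithmic proof in \cite{LSZ}.
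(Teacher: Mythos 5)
Your proposal matches the paper's treatment: the paper does not write out a proof of this theorem at all, but asserts it is a straightforward transcription of \cite[Theorem 1.6]{LSZ} to the logarithmic setting, with the only genuinely new input being the logarithmic Cartier/inverse Cartier transform supplied in the Appendix (and its lifting to $W_n$ from \cite[\S 5]{LSZ}) --- which is exactly the structure and the key technical point your sketch identifies. The functors in both directions, the role of the $W_n(\F_{p^f})$-endomorphism structure in forcing periodicity, and the extension of scalars from $W_n(\F_{p^{f'}})$ to $W_n(\F_{p^f})$ in the converse direction all agree with the argument of \emph{loc.\ cit.} that the paper is implicitly invoking.
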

After the fundamental theorem of Faltings \cite[Thereom 2.6*, Page 43 i)]{Fa0}, to each periodic Higgs-de Rham flow over $Y_n$ whose period is a divisor  of $f$ as above, one can now associate a crystalline  representation of the arithmetic fundamental group of the generic fiber $Y^\circ_K:=(Y-D)\times K$ with coefficients in $W_n(\F_{p^f})$. According to the theorem, the problem is reduced to showing that \eqref{Higgsbdl} over $k$ is periodic and can be lifted to a periodic Higgs bundle over an arbitrary truncated Witt ring $W_n, n\in \N$. Now we come to our main theorem.
\begin{thm}\label{mainthm}
Assume that $2g-2+r>0$ and $r$ is even. Let $X_1$ be a generic curve in the moduli  space $\mathscr M_{g,r}$ of smooth projective curves over $k$ with $r$ marked points $D$. Let $(L_1\oplus L_1^{-1},\theta_1)$ be a logarithmic Higgs bundle with maximal Higgs field \eqref{Higgsbdl} defined over $X_1$.
Then there exists a tower of log smooth  liftings $(X_n, D_n)/ W_n, n\in \N$
\begin{align}\label{lifts}
(X_1,D_1:=D)\hookrightarrow  (X_2,D_2)\hookrightarrow \cdots \hookrightarrow  (X_n,D_n)\hookrightarrow  \cdots,
\end{align}
such that $(L_1\oplus L_1^{-1},\theta_1)$ is two-periodic with respect to the $W_2$-lifting $(X_2,D_2)$ and lifts to a two-periodic logarithmic Higgs bundle $(L_n\oplus L_n^{-1},\theta_n)$ over $X_n$ with the log pole along $D_n$ for each $n\geq 2$.

To be more specific,  for all $n\in \N$, there exists a log smooth $W_{n+1}$-lifting
  $(X_{n+1},D_{n+1})$ of $(X_{n},D_n)$, a logarithmic Higgs bundle $(L_n\oplus L_n^{-1},\theta_n)$ over $X_n$, a two-torsion line bundle $\mathscr{L}_{n}$ over $X_{n}$,  a Hodge filtration $Fil_{n} $ on the inverse Cartier transform $C^{-1}_{n}(L_{n}\oplus
  L_{n}^{-1},\theta_{n})$ with respect to  $X_{n}\subset X_{n+1}$, and an isomorphism
  $$\phi_{n}: Gr_{Fil_{n} }\circ C_{n}^{-1}(L_{n}\oplus
  L_{n}^{-1},\theta_{n}) \cong (L_{n}\oplus
  L_{n}^{-1},\theta_{n})\otimes (\mathscr{L}_{n},0) ,$$
 such that for all $n\geq 2$,
  \begin{align*}
  &(L_{n}\oplus L_{n}^{-1},\theta_n) \equiv (L_{n-1}\oplus L_{n-1}^{-1},\theta_{n-1})  \text{ mod }p^{n-1};
  \\
  &\mathscr{L}_{n}  \equiv \mathscr{L}_{n-1} \text{ mod }p^{n-1}; \quad  Fil_{n} \equiv Fil_{n-1}  \text{ mod }p^{n-1}; \quad \phi_{n}  \equiv \phi_{n-1} \text{ mod
  }p^{n-1}.
\end{align*}

Set $(E_n, \theta_n):= (L_n\oplus
L_n^{-1},\theta_n)$, and denote the trivial filtration by $Fil_{tr}$. Then there is a tower of two-periodic
Higgs-de Rham flows as below:
\begin{align} \label{tower}
\begin{adjustbox}{scale=0.8}
\adjustbox{scale=0.75}{
\xymatrix{
 &  ((H_{n+1},
\nabla_{n+1}),  Fil_{n+1})  \ar[rd]^{\text{Gr}}  \ar@{.>}[ddd]^(.6){\text{ mod }p^{n}}& & ((H_{n+1},
\nabla_{n+1})\otimes \mathscr{L}_{n+1},  Fil_{n+1}\otimes Fil_{tr})\ar[rd]^{\text{Gr}}   \ar@{.>}[ddd]^(.6){\text{ mod }p^{n}} &\\
(E_{n+1},\theta_{n+1})  \ar[ru]^{C^{-1}_{n+1}}   \ar@{.>}[ddd]^-{\text{ mod }p^{n}} & &   (E_{n+1},\theta_{n+1})\otimes \mathscr{L}_{n+1} \ar[ru]^{C^{-1}_{n+1}}    \ar@{.>}[ddd]^-{\text{ mod }  p^{n}}& &(E_{n+1},\theta_{n+1})    \ar@{.>}[ddd]^-{\text{ mod }p^{n}} \ar@/^1pc/[llll]|(.45){\cong}\\
&&&&\\
 & ( (H_n,
\nabla_n),Fil_n) \ar[rd]^{\text{Gr}} & &  ((H_n,
\nabla_n)\otimes \mathscr{L}_{n},  Fil_n\otimes Fil_{tr}) \ar[rd]^{\text{Gr}} & \\
(E_n, \theta_n) \ar[ru]^{C^{-1}_{n}} & & (E_n, \theta_n) \otimes \mathscr{L}_{n} \ar[ru]^{C^{-1}_{n}}& & (E_n, \theta_n)\ar@/^1pc/[llll]|(.45){\cong}
}}
\end{adjustbox}
\end{align}
\end{thm}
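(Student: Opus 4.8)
The plan is to prove Theorem \ref{mainthm} by induction on $n$: the base case $n=1$ is the characteristic $p$ periodicity of \eqref{Higgsbdl} over a generic curve (relative to a suitable $W_2$-lifting), and the inductive step simultaneously extends the tower of log smooth liftings by one level and prolongs the period-two flow from $W_n$ to $W_{n+1}$. Two remarks organize everything. First, the hypothesis that $r$ is even is exactly what allows $\Omega^1_{X}(\log D)$, of degree $2g-2+r$, to have a square root $L$, so that a Higgs bundle of type \eqref{Higgsbdl} exists at all. Second, ``two-periodic'' is the same as ``one-periodic up to a $2$-torsion twist'': because $p$ is odd, a $2$-torsion Higgs line bundle $(\mathscr L,0)$ has $C^{-1}_n(\mathscr L,0)\cong(\mathscr L,\nabla_{\mathrm{can}})$ and $Gr_{Fil_{tr}}(\mathscr L,\nabla_{\mathrm{can}})\cong(\mathscr L,0)$, so tensoring one step of the flow by $(\mathscr L_n,0)$ and using $\mathscr L_n^{\otimes 2}\cong\sO$ converts the isomorphism $\phi_n$ of the statement into a genuine period-two Higgs-de Rham flow, as recorded by \eqref{tower}.

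For the base case I would analyze $(V_1,\nabla_1):=C_1^{-1}(L_1\oplus L_1^{-1},\theta_1)$, with $C_1^{-1}$ taken relative to a $W_2$-lifting $(X_2,D_2)$ still to be chosen: it is a rank two logarithmic flat bundle with $\det(V_1,\nabla_1)\cong(\sO,d)$ whose $p$-curvature corresponds under the Cartier transform to the isomorphism $\theta_1$ and is therefore nilpotent and nowhere zero. A Hodge filtration on $(V_1,\nabla_1)$ whose associated graded is again of maximal-Higgs-field type is exactly a line subbundle $V_1^1\subset V_1$ of degree $g-1+r/2$ on which $\nabla_1$ induces an isomorphism $V_1^1\xrightarrow{\ \sim\ }(V_1/V_1^1)\otimes\Omega^1_{X_1}(\log D_1)$; then $(V_1^1)^{\otimes 2}\cong\Omega^1_{X_1}(\log D_1)\cong L_1^{\otimes 2}$, whence $V_1^1\cong L_1\otimes\mathscr L_1$ with $\mathscr L_1^{\otimes 2}\cong\sO$. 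Such a flag is a logarithmic $\mathrm{GL}_2$-oper structure on $(V_1,\nabla_1)$, i.e.\ (after the $2$-torsion twist) a logarithmic indigenous bundle in the sense of Mochizuki's $p$-adic Teichm\"{u}ller theory, and producing $(X_2,D_2)$, $V_1^1$ and an isomorphism $\phi_1$ returning the original Higgs bundle after one more step of the flow is precisely the statement that this structure is a periodic point of the resulting self-correspondence. I would deduce the base case from the geometry of the moduli stack of such periodic structures over $\mathscr M_{g,r}$: it should be shown to dominate $\mathscr M_{g,r}$ by a dimension count, the required non-emptiness and finiteness coming from the theory of nilpotent logarithmic indigenous bundles; over a generic curve the stack is then finite \'etale, so that --- after untwisting the $\mathrm{PGL}_2/\mathrm{GL}_2$ discrepancy and a semicontinuity argument over $\mathscr M_{g,r}$ --- a structure of period $2$ exists. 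Establishing this characteristic $p$ periodicity (above all the non-emptiness) is the first principal difficulty.

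For the inductive step, assume the data for all levels $\le n$ has been built with the stated congruences; in particular a $W_{n+1}$-lifting $(X_{n+1},D_{n+1})$ is at hand. The discrete and line-bundle data at level $n+1$ are forced once a further log smooth $W_{n+2}$-lifting $(X_{n+2},D_{n+2})$ of $(X_{n+1},D_{n+1})$ is chosen: such a lifting exists because the obstruction lies in $H^2(X_1,T_{X_1}(-\log D_1))=0$, and its choices form a torsor under $H^1(X_1,T_{X_1}(-\log D_1))$, the tangent space of $\mathscr M_{g,r}$; on $X_{n+1}$ the square root $L_{n+1}$ of $\omega_{X_{n+1}/W_{n+1}}(\log D_{n+1})$ lifting $L_n$ exists and is unique because multiplication by $2$ on $H^1(X_1,\sO_{X_1})$ is invertible, $\theta_{n+1}$ is then tautological, and $\mathscr L_n$ lifts uniquely to a $2$-torsion $\mathscr L_{n+1}$ since $\mathrm{Pic}[2]$ is finite \'etale. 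The real work is to lift $(Fil_n,\phi_n)$ to $(Fil_{n+1},\phi_{n+1})$ on $C_{n+1}^{-1}(L_{n+1}\oplus L_{n+1}^{-1},\theta_{n+1})$: using the lifted inverse Cartier transform of \cite{LSZ}, the obstruction is a class in a cohomology group which, because $(L_1\oplus L_1^{-1},\theta_1)$ is Higgs stable (so its infinitesimal automorphisms are trivial and the competing $H^0$-obstructions vanish), is a quotient of a cohomology group of dimension $3g-3+r$; as $(X_{n+2},D_{n+2})$ ranges over its $H^1(X_1,T_{X_1}(-\log D_1))$-torsor the obstruction class varies by an affine map whose linear part is a Kodaira--Spencer-type cup product, and genericity of $X_1$ is exactly what forces this linear map to be surjective, so the obstruction is annihilated by a suitable choice of $(X_{n+2},D_{n+2})$.

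After fixing such a choice and lifting $(Fil_n,\phi_n)$, the level-$(n+1)$ flow in \eqref{tower} follows; passing to the inverse limit over $n$ --- at each stage the solutions form a non-empty set with surjective transition maps --- yields the tower \eqref{lifts}, and the equivalence of categories recalled above then furnishes the claimed crystalline representation of $\pi_1(X^\circ_K)$. I expect the main obstacle to be this obstruction analysis for $(Fil_n,\phi_n)$, and in particular the verification that the genericity of $X_1$ makes the relevant Kodaira--Spencer map surjective --- the precise point where ``generic curve'' is indispensable and where Mochizuki's canonical $p$-adic lifting enters implicitly --- together with the logically prior characteristic $p$ base case.
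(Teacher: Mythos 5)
Your overall architecture matches the paper's: a characteristic~$p$ base case (periodicity of \eqref{Higgsbdl} for a suitable $W_2$-lifting of a generic curve), followed by an inductive lifting step in which the obstruction to lifting the Hodge filtration varies over the $H^1(X_1,\mathcal{T}_{X_1/k})$-torsor of $W_{n+2}$-liftings and is killed by a suitable choice. Your reduction of two-periodicity to one-periodicity up to a two-torsion twist is exactly Corollary~\ref{cor of thm 03}, your identification of the required filtration $Fil_1^1\cong L_1\otimes\mathscr L$ with $\mathscr L$ two-torsion reproduces the degree computation in Proposition~\ref{prop20}, and your description of the obstruction map as affine over the torsor with a Kodaira--Spencer-type linear part is Proposition~\ref{anti-obstr}, whose surjectivity is the paper's \emph{ordinariness} of the section $s$.

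However, there is a genuine gap at precisely the two places you flag as ``the principal difficulty'': you do not prove either (i) the existence, for a generic curve, of a $W_2$-lifting and filtration realizing the periodicity, or (ii) the surjectivity of the linear part $d\varrho$ of the obstruction map. For (i), the dimension count $\dim A+\dim K=\dim H^1(X_1,F^*L_1^{-2})$ does \emph{not} by itself give $A\cap K\neq\emptyset$: $A$ is an affine subspace not through the origin and $K$ is a cone in an affine ambient space, so no B\'ezout-type or properness argument applies, and the paper explicitly remarks that nonemptiness is hard to see directly. Appealing to ``the theory of nilpotent logarithmic indigenous bundles'' to get dominance and finite \'etaleness is essentially importing Mochizuki's existence theorem without verifying that it produces the specific structure needed here (a flag whose associated graded returns the original Higgs bundle up to a two-torsion twist, relative to a lifting lying in the image of the torsor map $\rho$, i.e.\ in $A$ rather than merely in $B$). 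For (ii), ``genericity forces surjectivity'' is an assertion, not an argument: surjectivity of $\check s^2\circ F^*$ between spaces of dimension $3g-3+r$ is a nontrivial open condition that must be exhibited at some point of $\mathscr M_{g,r}$. The paper's mechanism for both (i) and (ii) is the degeneration argument of \S4--5: over a totally degenerate $(g,r)$-curve the intersection $A\cap K$ is computed explicitly via the gluing matrices at the nodes and shown to be a single point (Proposition~\ref{AKinter}), ordinariness is verified there by a diagram chase on the normalization (Proposition~\ref{tdord}), and both properties propagate to a generic smooth curve by constructing the relative $\sA$, $\sK$ over a deformation base and using properness/semicontinuity. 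This degeneration input is entirely absent from your proposal and cannot be replaced by the dimension count alone.
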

The following consequence is immediate from Theorems 1.1 and 1.2 .
\begin{corollary}\label{corollary}
Use notation as above. Then for a logarithmic Higgs bundle with maximal Higgs field over a generic curve $(X_1,D_1)$ in the moduli space $\mathscr M_{g,r}$, one has a log smooth curve  $(X_{\infty},D_{\infty})$ over $W$ lifting $(X_1,D_1)$ together with a two-dimensional irreducible crystalline  representation
\begin{align}
\rho: \pi_1(X_K) \to \text{GL}(2, W(\mathbb{F}_{p^2})),
\end{align}
where $X_K:=X\times_W K$ is the generic fiber of the hyperbolic curve $X:=X_{\infty}-D_{\infty}$ over $W$.
\end{corollary}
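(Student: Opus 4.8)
\emph{Proof strategy.} The plan is to obtain the representation by chaining Theorem~\ref{mainthm} into Theorem~1.1 and then into Faltings' construction of crystalline representations out of Fontaine modules. First I would apply Theorem~\ref{mainthm} to $(X_1,D_1)$ and $(L_1\oplus L_1^{-1},\theta_1)$, producing a compatible tower of log smooth liftings $(X_n,D_n)/W_n$ together with, at each level $n$, a two-periodic Higgs--de~Rham flow with initial term $(E_n,\theta_n)=(L_n\oplus L_n^{-1},\theta_n)$, the flows being congruent mod $p^{n-1}$ as in \eqref{tower}. Since $\omega_{X_n/W_n}(D_n)$ has positive relative degree ($2g-2+r>0$), it is relatively ample, so each $X_n$ is projective over $W_n$; by Grothendieck's existence theorem the tower $(X_n,D_n)$ algebraizes to a projective log smooth curve $(X_\infty,D_\infty)$ over $W$ lifting $(X_1,D_1)$, and I set $X:=X_\infty-D_\infty$.

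Next, for each $n$ I would invoke Theorem~1.1 with $Y=X_\infty$, $D=D_\infty$ and $f=2$. Its numerical hypotheses hold: in the flow the grading has two steps, so the Higgs field is nilpotent of exponent one and the Hodge filtration has level one, both $\leq p-2$ as $p$ is odd, and the period divides $2$. Theorem~1.1 then converts the two-periodic flow over $(X_\infty)_n=X_n$ into a strict $p^n$-torsion logarithmic Fontaine module $\mathcal{M}_n$ on $X_\infty/W$ (with pole along $D_\infty$), carrying an endomorphism structure by $W_n(\mathbb{F}_{p^2})$ and with Hodge--Tate weights $\leq p-2$; the congruences among the flows make $\{\mathcal{M}_n\}$ an inverse system. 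By Faltings' theorem \cite[Theorem~2.6*]{Fa88}, each $\mathcal{M}_n$ gives a crystalline representation $\rho_n\colon\pi_1(X_K)\to\mathrm{GL}(2,W_n(\mathbb{F}_{p^2}))$; these are compatible in $n$, and their inverse limit is the desired continuous crystalline representation $\rho\colon\pi_1(X_K)\to\mathrm{GL}(2,W(\mathbb{F}_{p^2}))$.

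It then remains to show that $\rho$ is absolutely irreducible, and this is the one place where an argument rather than bookkeeping is required. The decisive fact is that a Higgs bundle with maximal Higgs field is Higgs-stable of degree zero: the only $\theta$-invariant line subbundle of $L\oplus L^{-1}$ is $L^{-1}$, of negative degree $-(g-1+r/2)$. A nonzero proper subrepresentation of $\rho$ --- or, after extending coefficients to an algebraic closure, of its reduction mod $p$ --- would, through the exact and fully faithful functors of Theorem~1.1 and \cite{Fa88}, descend to a nonzero proper subobject of the level-one Higgs--de~Rham flow; restricting to level zero this yields a nonzero proper $\theta_1$-invariant saturated subsheaf of $L_1\oplus L_1^{-1}$, which is necessarily of degree zero because a periodic Higgs bundle has vanishing degree, contradicting the stability just noted. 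Hence $\rho$ is absolutely irreducible. The point that genuinely needs care in a complete write-up is the exactness and subobject-compatibility of the functors involved --- this is what licenses transporting a subrepresentation back to a Higgs subsheaf; the algebraization, the passage to the inverse limit, and the numerical verifications are routine.
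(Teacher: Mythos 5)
Your construction of $\rho$ is exactly the route the paper has in mind when it calls Corollary \ref{corollary} ``immediate'': Theorem \ref{mainthm} supplies the compatible tower of two-periodic flows, Theorem 1.1 (with $f=2$; the nilpotency and Hodge--Tate bounds are $1\leq p-2$ since $p$ is odd) converts them into an inverse system of strict $p^n$-torsion logarithmic Fontaine modules with $W_n(\F_{p^2})$-structure, and Faltings \cite{Fa88} plus a limit gives $\rho$; the algebraization of the tower to $(X_\infty,D_\infty)$ is the routine step the paper leaves implicit. Where you genuinely diverge is the irreducibility. The paper does not argue it inside the corollary: it is established (in the stronger form of absolute irreducibility after pullback to any finite cover, stated for $D_1=\emptyset$) in the Proposition that follows, and there the mechanism is Faltings' $p$-adic Simpson correspondence \cite{Fa} --- one identifies $\bar\rho\otimes\C_p$ with the generalized representation attached to $(E,\theta)$ and kills degree-zero Higgs subbundles using the exact sequence $0\to(f^*L^{-1},0)\to f^\circ(E,\theta)\to(f^*L,0)\to 0$ coming from the twisted pullback. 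You instead stay entirely in the crystalline category and transport a hypothetical subrepresentation back to a $\theta_1$-invariant subsheaf. That route can work, but two points need more care than you give them. First, the transport is not licensed by exactness and full faithfulness alone; you need the part of Faltings' Theorem 2.6* asserting that the essential image of the functor is closed under subobjects (together with the corresponding subobject compatibility for the equivalence of Theorem 1.1), and for \emph{absolute} irreducibility you must also let the coefficient field grow, i.e.\ regard the flow as periodic of period $2m$ for every $m$. Second, your reason that the resulting saturated $\theta_1$-invariant subsheaf has degree zero (``because a periodic Higgs bundle has vanishing degree'') is not the right one --- a subsheaf of a degree-zero bundle can have any degree; what forces degree zero is that the subsheaf underlies a periodic sub-flow, and one period multiplies degrees by $p$, so periodicity gives $p^f\deg=\deg$. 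With these repairs your argument is a valid and arguably more self-contained alternative to the paper's Proposition; what the paper's route buys in exchange is the stronger conclusion about restrictions to arbitrary finite covers of $X_{\bar K}$.
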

The so-constructed representation $\rho$ in the above corollary shares the following stronger irreducibility property. For simplicity, we give  the statement  only for the case where the divisor $D_1$ in $X_1$ is empty.
\begin{proposition}
Use notation as above. Assume $D_1=\emptyset$. Denote by $\bar{\rho}$ the restriction of $\rho$ to the geometric fundamental group  $\pi_1 (X_{\bar{K}})$. Then for any smooth curve $Y_{\bar{K}}$ over $\bar{K}$ with a finite morphism $f: Y_{\bar{K}} \to X_{\bar{K}}$, the induced representation of $\pi_1(Y_{\bar{K}})$ from $\bar \rho$ is absolutely irreducible.
\end{proposition}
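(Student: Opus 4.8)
The plan is to reduce the statement to finite étale covers and then read off absolute irreducibility from the positivity of $\deg L_1$, in parallel with the elementary fact that a hyperbolic Riemann surface stays hyperbolic on a finite cover. Since $D_1=\emptyset$ the curve $X_{\bar K}$ is proper, so a finite morphism $f\colon Y_{\bar K}\to X_{\bar K}$ from a connected smooth curve forces $Y_{\bar K}$ to be a smooth projective curve and $f$ a finite cover, branched over a finite set $B\subset X_{\bar K}$. Over $X^{\circ}:=X_{\bar K}\setminus B$ the restriction $Y^{\circ}:=f^{-1}(X^{\circ})\to X^{\circ}$ is connected finite étale, whence $\pi_1(Y^{\circ}_{\bar K})\hookrightarrow\pi_1(X^{\circ}_{\bar K})$ is an open embedding; and since $X_{\bar K}$ is proper, $\pi_1(X^{\circ}_{\bar K})\twoheadrightarrow\pi_1(X_{\bar K})$ kills all the inertia at the points of $B$. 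Hence the composite $\pi_1(Y^{\circ}_{\bar K})\to\pi_1(X_{\bar K})$ kills the inertia of $Y$ over $B$, factors through $\pi_1(Y_{\bar K})$, and this factorization is the induced representation in question; moreover it is the inflation, along the surjection $\pi_1(Y_{\bar K})\twoheadrightarrow\Gamma$ onto its image $\Gamma\leq\pi_1(X_{\bar K})$ (an open subgroup), of $\bar\rho|_{\Gamma}$. As inflation along a surjection preserves and detects (absolute) irreducibility, and as every open subgroup of $\pi_1(X_{\bar K})$ arises in this way, it suffices to prove that $p^{*}\bar\rho:=\bar\rho|_{\pi_1(X'_{\bar K})}$ is absolutely irreducible for every connected finite étale cover $p\colon X'\to X_{\bar K}$.

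Next I would pass to the Higgs side. By construction $\bar\rho$ is the restriction to the geometric fundamental group of the crystalline representation attached to the two-periodic Higgs--de Rham flow of Theorem~\ref{mainthm} (via the above equivalence of categories and \cite{Fa88}). Under Faltings' $p$-adic Simpson correspondence over $X_{\C_p}$, the local system $\bar\rho$ therefore corresponds to the associated graded Higgs bundle of that flow, namely $(L_1\oplus L_1^{-1},\theta_1)\otimes\C_p$, a Higgs bundle with maximal Higgs field \eqref{Higgsbd0}; and, by functoriality of this correspondence under pullback, $p^{*}\bar\rho$ corresponds to $(p^{*}L_1\oplus p^{*}L_1^{-1},p^{*}\theta_1)$ on $X'_{\C_p}$. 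Since $p$ is étale, $p^{*}\theta_1\colon p^{*}L_1\to p^{*}L_1^{-1}\otimes\Omega^1_{X'}$ is again an isomorphism and $(p^{*}L_1)^2\cong\Omega^1_{X'}$, so this pulled-back bundle again has maximal Higgs field.

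The remaining input is an elementary stability computation. A rank-two graded Higgs bundle $(M\oplus M^{-1},\theta)$ with maximal Higgs field on a smooth projective curve with $\deg M>0$ is Higgs stable: a $\theta$-invariant line subsheaf $N$ must be contained in $M^{-1}$ --- otherwise its nonzero projection $a\colon N\to M$, combined with $\theta$-invariance and the injectivity of $\theta$ and of $a\otimes\mathrm{id}_{\Omega^1}$, yields a contradiction --- so that $\deg N\leq-\deg M<0$. In our situation $M=p^{*}L_1$ and $\deg M=(\deg p)(g-1)>0$, because $g\geq2$ (here $r=0$ and $2g-2+r>0$). Suppose now that $p^{*}\bar\rho$ is not absolutely irreducible; being two-dimensional, it then admits, over a finite extension of its coefficient field, a one-dimensional invariant subspace, i.e.\ a sub-representation given by a continuous character $\chi$ of $\pi_1(X'_{\bar K})$. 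On one hand, functoriality of the correspondence turns $\chi$ into a $\theta$-invariant line subsheaf of $(p^{*}L_1\oplus p^{*}L_1^{-1},p^{*}\theta_1)$; on the other hand, a continuous character of the fundamental group of the proper curve $X'_{\C_p}$ corresponds under the Faltings correspondence to a line bundle of degree $0$ (the abelian case; cf.\ also \cite{DW}). This contradicts the negativity of the degree established above, whence $p^{*}\bar\rho$ --- and therefore the induced representation of $\pi_1(Y_{\bar K})$ --- is absolutely irreducible.

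The step I expect to demand the most care is the second one: making precise the functoriality of the Faltings correspondence that is used here --- its compatibility with finite étale pullback and with the finite extension of the coefficient ring needed to exhibit the hypothetical invariant line; the identification of the Higgs bundle underlying the crystalline $\bar\rho$ with the \emph{graded} bundle $(L_1\oplus L_1^{-1},\theta_1)$ of its flow, rather than with an intermediate term of the flow (though any graded term would do equally); and the passage from a sub-representation of $p^{*}\bar\rho$ to a sub-Higgs-sheaf of $(p^{*}L_1\oplus p^{*}L_1^{-1},p^{*}\theta_1)$ of the expected degree. Granting this bookkeeping, the proposition is forced by the single inequality $\deg L_1=g-1>0$.
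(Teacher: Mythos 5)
Your proof is correct in outline, but it reaches the conclusion by a genuinely different route from the paper. The paper treats the possibly ramified morphism $f$ head-on: it descends $f$ to a finite extension $L/K$, takes a semistable (toroidal) model $Y_{\mathcal{O}_L}\to X_{\mathcal{O}_L}$, and applies Faltings' \emph{twisted} pullback $f^{\circ}$ to the limit Higgs bundle $(E,\theta)$, which by construction sits in an extension $0\to (f^*L^{-1},0)\to f^{\circ}(E,\theta)\to (f^*L,0)\to 0$; the degree argument then rules out a degree-zero Higgs line subsheaf. You instead make a purely group-theoretic reduction first: the induced representation $\bar\rho\circ f_*$ is the inflation of $\bar\rho|_{\Gamma}$ along $\pi_1(Y_{\bar K})\twoheadrightarrow\Gamma:=f_*(\pi_1(Y_{\bar K}))$, an open subgroup of $\pi_1(X_{\bar K})$, so it suffices to treat connected finite \'etale covers $X'\to X_{\bar K}$. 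This reduction is sound (the inertia at $f^{-1}(B)$ dies in $\pi_1(X_{\bar K})$ because $X_{\bar K}$ is proper, and inflation along a surjection preserves and detects absolute irreducibility), and it buys you a real simplification: you never need semistable reduction nor the twisted pullback along a ramified map, only functoriality of the correspondence under \'etale pullback, where compatible liftings exist and the pulled-back Higgs bundle retains its maximal Higgs field, so your explicit stability computation ($\theta$-invariant line subsheaves lie in $M^{-1}$ and have negative degree) applies verbatim. What the paper's route buys is uniformity -- it handles the ramified case directly without passing through the factorization of $f_*$ -- at the cost of invoking the toroidal machinery. Both arguments ultimately rest on the same two inputs from \cite{Fa}, which you correctly flag as the delicate bookkeeping: that $\bar\rho\otimes\C_p$ corresponds to the (graded) Higgs bundle of the flow compatibly with pullback, and that a rank-one subrepresentation produces a $\theta$-invariant line subsheaf of degree zero; the paper's own proof uses these identically, so you are on the same footing there.
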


\begin{proof} Let $(E,\theta)$ be the inverse limit of the graded Higgs bundle $\{(E_n,\theta_n)\}$ in Theorem \ref{mainthm}.
By the example in \cite[Page 861]{Fa}, one sees that the generalized representation corresponding to $(E,\theta)_{\C_p}:=(E,\theta)\otimes \C_p$ is compatible with $\bar{\rho}$, that is, it is just the scalar extension of $\bar{\rho}$ by tensoring with $\C_p$.  We can find a finite extension field  $K'$ of  $K$, with its integral ring $\mathcal{O}_{K'}$, such that $Y_{\bar K}$ has an integral model $Y_{\mathcal{O}_{K'}}$ over $\mathcal{O}_{K'}$ with toroidal singularity. By the construction of the correspondence \cite[Theorem 6]{Fa}, the twisted pullback of the graded Higgs bundle $f^{\circ}(E,\theta)_{\C_p}$ corresponds to the pull-back representation of $\bar{\rho}\otimes \C_p$ to $\pi_1 (Y_{\bar{K}})$. By the very construction of the twisted pullback, one has a short exact sequence
 $$
 0\to (f^*L^{-1},0)_{\C_p}\to f^{\circ}(E,\theta)_{\C_p}\to (f^*L,0)_{\C_p}\to 0,
 $$
and that the Higgs field of $f^{\circ}(E,\theta)_{\C_p}$ has nonzero Higgs field. Assume the contrary that the restricted $\C_p$-representation of $\pi_1 (Y_{\bar{K}})$ is not irreducible. Then it contains a one-dimensional $\C_p$-subrepresentation, and by the last paragraph in \cite[Page 860]{Fa}, it follows that $f^{\circ}(E,\theta)_{\C_p}$ contains a rank one Higgs subbundle $(N,0)$ of degree zero. But, this leads to a contradiction: as $\deg f^*L^{-1}<0$, the composite
$$
N\to   f^{\circ}(E,\theta)_{\C_p}\to f^*L_{\C_p}
$$
cannot be zero and hence an isomorphism over a nonempty open subset $U$. This implies that the Higgs field of $f^{\circ}(E,\theta)_{\C_p}$ is zero over $U$ and hence zero over the whole space which is impossible.
\end{proof}

In general, the $p$-adic curve appeared in Corollary \ref{corollary} is neither unique nor far from an arbitrary lifting of $(X_1,D_1)$ over $k$. In fact, it is in close relation with the notion of a \emph{canonical curve} due to S. Mochizuki \cite[Definition 3.1, Ch. III]{Mo}. At this point, it is necessary to make a brief clarification about the relation between the notion of an \emph{indigenous bundle}, which is central in the $p$-adic Teichm\"{u}ller theory of Mochizuki for curves, and the notion of a logarithmic Higgs bundle with maximal Higgs field (i.e. \eqref{Higgsbdl}). According to Mochizuki, an indigenous bundle is a $\P^1$-bundle with connection associated to a rank two flat bundle with a Hodge filtration, and its associated graded Higgs bundle is of the form
\eqref{Higgsbdl}.  He started with indigenous bundles over a Riemann surface admitting an integral structure over $W(k)$ for some $p$, and then
studied the moduli space of $p$-adic indigenous bundles over
the corresponding $p$-adic curve. Although related, the approach and setting in \cite{Mo} are very different from ours. From the point of view of nonabelian Hodge theory, one is by no means restricted to the curve case. As an illustration, our approach yields a similar result for an ordinary abelian variety $A$ as Theorem \ref{mainthm} by considering the following Higgs bundle (see \cite[Example 5.27]{LSZ}):
$$(\Omega_{A}\oplus\mathcal{O}_{A}, \theta), \text{ where }
\theta:  \Omega_{A}\to \mathcal{O}_{A}\otimes\Omega_{A} \text{ is the tautological isomorphism}.$$

Finally, we shall remind the reader that the explicit form of the divisor $D_1$ in the log curve $X_1$ plays a minor role in our paper and therefore will be suppressed whenever the context is clear. Thus the notation $X_1$ could sometimes actually mean the whole pair $(X_1,D_1)$. The same convention applies also for log curves $X_n$ over $W_n, n\geq 2$.

\section{Outline of the proof of the main theorem}

Let $(E_1:=L_1\oplus L_1^{-1},\theta_1)$ be the Higgs bundle
 \eqref{Higgsbdl}. The proof of Theorem
\ref{mainthm} is divided into two steps. In the first step, we show
that there exists a $W_{2}$-lifting of $X_1$ such that
\eqref{Higgsbdl} becomes a two-periodic Higgs bundle, see Theorem
\ref{thm03}; in the second step, we show that under some conditions,
a periodic Higgs bundle over $X_n$ is liftable to a periodic Higgs
bundle over $X_{n+1}$, see Theorem \ref{thm04}.

\begin{theorem}\label{thm03}
For a generic curve $X_1\in \mathscr{M}_{g,r}$,
there exists a  $W_2$-lifting $X_2$ of  $X_1$ and a Hodge filtration
$Fil_1$ on $C_{1}^{-1}(L_1\oplus L_1^{-1},\theta_{1})$
such that
\begin{align}\label{eq00}
Gr_{Fil_1}\circ C_{1}^{-1}(L_1\oplus
L_1^{-1},\theta_{1})\ \cong \ (L_1  \oplus L_1^{-1},\theta_1)\otimes
(\mathscr{L},0 ),
\end{align}
where $\mathscr{L}$ is a two-torsion line bundle over $X_1$.
\end{theorem}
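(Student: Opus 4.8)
The plan is to compute the inverse Cartier transform $C_1^{-1}(L_1\oplus L_1^{-1},\theta_1)$ explicitly, choose the Hodge filtration, and then identify the associated graded Higgs bundle up to a two-torsion twist. First I would recall that the Higgs bundle in \eqref{Higgsbdl} has a very rigid local shape: since $\theta_1\colon L_1\to L_1^{-1}\otimes\Omega^1_{X_1}(\log D)$ is an isomorphism with $L_1^2\cong\Omega^1_{X_1}(\log D)$, the flat bundle $(H,\nabla):=C_1^{-1}(E_1,\theta_1)$ is again of rank two with trivial determinant, and by the compatibility of the inverse Cartier transform with the Hodge filtration in the nonabelian Hodge theory in characteristic $p$, the flat bundle carries a canonical Hodge filtration whose graded is related to a Frobenius twist of $(E_1,\theta_1)$. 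More precisely, since $X_1$ is generic, I would try to use that $(E_1,\theta_1)$ is the graded of an \emph{indigenous}/oper-type flat bundle, so that $C_1^{-1}$ of it is again of oper type and its canonical Hodge filtration $Fil$ has graded a line bundle $M$ together with $M^{-1}\otimes\Omega^1_{X_1}(\log D)$. The point is then to show that $M$ and $L_1$ differ by a two-torsion line bundle $\mathscr L$, i.e. $M^2\cong L_1^2\cong\Omega^1_{X_1}(\log D)$.

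The second main step is the choice of $W_2$-lifting $X_2$. Here I expect the genericity hypothesis on $X_1\in\mathscr M_{g,r}$ to be essential. The line bundle $M$ produced by the construction depends on the lifting $X_2$ (since $C_1^{-1}$ does), and one needs to arrange $M\otimes L_1^{-1}$ to be two-torsion. I would set up a map from the space of $W_2$-liftings of $(X_1,D_1)$ — a torsor under $H^1(X_1,T_{X_1}(-\log D_1))$ — to an appropriate moduli of the output data (roughly, the difference class $M\otimes L_1^{-1}\in\Pic(X_1)$, or its image in $\Pic(X_1)/2$), and show that for generic $X_1$ this map can be made to hit the two-torsion locus. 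Equivalently, one can phrase it via Mochizuki's theory: a $W_2$-lifting together with a lifting of the indigenous bundle corresponds to an $F$-divided/nilpotent indigenous bundle, and the existence of such with the prescribed graded is exactly the statement that $X_1$ admits a $W_2$-valued point of the moduli stack of nilpotent indigenous bundles lying over it — which holds for generic $X_1$ by a dimension count ($3g-3+r$ on each side) plus a non-degeneracy/dominance argument for the $p$-curvature map, as in \cite{Mo}.

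Concretely the steps are: (1) describe $C_1^{-1}(E_1,\theta_1)$ as a flat bundle and identify when it admits a Hodge filtration of level $\le p-2$ with line-bundle graded pieces; (2) show the graded pieces are forced to be $M$ and $M^{-1}\otimes\Omega^1_{X_1}(\log D)$ with $\theta$ the tautological map, so that $\mathscr L:=M\otimes L_1^{-1}$ satisfies $\mathscr L^{\otimes 2}\cong\mathcal O_{X_1}$; (3) analyze the dependence of $M$ (hence of $\mathscr L$) on the choice of $W_2$-lifting, reducing the theorem to finding a lifting for which $\mathscr L$ is trivial or, more robustly, just two-torsion; (4) invoke genericity of $X_1$ to guarantee such a lifting exists, via a count of moduli / a dominance statement for the relevant classifying map. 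The main obstacle is step (3)–(4): controlling how the output line bundle varies with the $W_2$-lifting and proving the needed surjectivity/dominance onto the two-torsion locus — everything else is a local computation with the inverse Cartier transform plus the rigidity of rank-two bundles with trivial determinant and maximal Higgs field. I would expect the genericity to enter precisely to avoid the exceptional loci where this classifying map degenerates.
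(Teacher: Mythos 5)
There is a genuine gap, and it sits exactly where the paper has to work hardest. Your step (1)--(2) asserts that $C_1^{-1}(E_1,\theta_1)$ ``carries a canonical Hodge filtration'' because it is of oper/indigenous type. It does not: the only canonical subobject of $(H,\nabla)=C_1^{-1}(E_1,\theta_1)$ is the flat subbundle $(F^*L_1^{-1},\nabla_{can})$ coming from the exact sequence \eqref{eq01}, which is $\nabla$-invariant and of the wrong (very negative) degree, hence useless as a $Fil^1$. A usable Hodge filtration is a line subbundle of degree $\tfrac{2g-2+r}{2}$ that is \emph{not} $\nabla$-invariant, and whether such a subbundle exists for a given $W_2$-lifting is precisely the open question --- the paper stresses in the introduction that the filtrations in a Higgs--de Rham flow are extra data, not outputs of $C^{-1}$. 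The paper encodes this existence question as membership of the extension class of \eqref{eq01} in the periodic cone $K\subset H^1(X_1,F^*L_1^{-2})$, and encodes the dependence on the $W_2$-lifting as the affine subspace $A$ (image of the torsor map $\rho$); Proposition \ref{prop20} then shows that a point of $A\cap K$ yields the filtration, with your degree/two-torsion argument appearing there correctly. So your step (2) is recoverable once existence is granted, but existence is assumed, not proved.

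The second half of the gap is step (4). A dimension count gives $\dim A+\dim K=\dim H^1(X_1,F^*L_1^{-2})$ (equation \eqref{eq07}), i.e.\ the expected intersection is zero-dimensional --- but since $A$ is an affine subspace \emph{not} passing through the origin and $K$ is a cone, complementary dimensions alone do not force $A\cap K\neq\emptyset$; indeed for $\P^1$ with four marked points the relevant intersection $W_F\cap K$ is actually zero (Proposition \ref{proj line}), so one cannot wave at a general position argument. The paper's actual mechanism is a degeneration argument: for a totally degenerate $(g,r)$-curve one computes $A\cap K$ explicitly by gluing the rank-two bundles over the $(0,3)$-components along the nodes and finds it is a single point (Proposition \ref{AKinter}); then $A$ and $K$ are spread out over a deformation base and semicontinuity/properness gives nonemptiness for nearby smooth curves, hence for a generic curve (Proposition \ref{generic nonempty intersection}). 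Your appeal to ``a non-degeneracy/dominance argument for the $p$-curvature map, as in Mochizuki'' is a placeholder for this entire step; if you want to pursue that alternative route you would still have to prove that a nilpotent indigenous bundle on $X_2$ exists \emph{whose associated graded is the prescribed} $(L_1\oplus L_1^{-1},\theta_1)$ up to a two-torsion twist, which is not a formal consequence of a moduli count.
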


\begin{corollary}\label{cor of thm 03}
Use notation as above. The Higgs bundle $(L_1\oplus
L_1^{-1},\theta_1)$ with respect to the $W_2$-lifting $X_2$ is two-periodic.
\end{corollary}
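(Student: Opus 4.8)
The plan is to deduce two-periodicity directly from the isomorphism \eqref{eq00} of Theorem \ref{thm03} by running the Higgs–de Rham flow starting from $(E_1,\theta_1)=(L_1\oplus L_1^{-1},\theta_1)$ and checking that it returns to the initial term after exactly two steps (and not after one). First I would set $(E_1',\theta_1') := Gr_{Fil_1}\circ C_1^{-1}(E_1,\theta_1)$, which by \eqref{eq00} is isomorphic to $(E_1,\theta_1)\otimes(\mathscr L,0)$; since $\mathscr L$ is two-torsion and $\deg\mathscr L=0$, this is again a rank two graded logarithmic Higgs bundle of the shape \eqref{Higgsbdl} (with $L_1$ replaced by $L_1\otimes\mathscr L$), in particular with maximal Higgs field. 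So its inverse Cartier transform carries a canonical Hodge filtration of level one as well, and one may feed it back into the flow.

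The key computation is to identify the \emph{second} graded term. Applying $Gr\circ C_1^{-1}$ to $(E_1,\theta_1)\otimes(\mathscr L,0)$ and using the compatibility of the inverse Cartier transform with tensoring by a Higgs bundle with zero Higgs field that underlies a flat bundle — here $\mathscr L$ with its canonical connection, noting that a two-torsion line bundle on $X_1$ with a chosen square-trivialization has a canonical (flat, $p$-curvature zero) structure, and that $C_1^{-1}(\mathscr L,0)$ is that flat line bundle — one gets
\[
Gr_{Fil_1'}\circ C_1^{-1}\big((E_1,\theta_1)\otimes(\mathscr L,0)\big)\ \cong\ \big(Gr_{Fil_1}\circ C_1^{-1}(E_1,\theta_1)\big)\otimes(\mathscr L,0)\ \cong\ (E_1,\theta_1)\otimes(\mathscr L^{\otimes 2},0)\ \cong\ (E_1,\theta_1),
\]
the last step because $\mathscr L$ is two-torsion. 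This produces an isomorphism $\phi\colon (E_2,\theta_2)\cong(E_0,\theta_0)$ in the notation of the flow, hence a periodic Higgs–de Rham flow of period dividing two. To see the period is exactly two and not one, I would argue that a period-one structure would force $(E_1,\theta_1)\cong(E_1,\theta_1)\otimes(\mathscr L,0)$ as graded Higgs bundles, i.e. $L_1\cong L_1\otimes\mathscr L$, forcing $\mathscr L\cong\mathcal O_{X_1}$; one then notes that for a generic curve the line bundle $\mathscr L$ produced in Theorem \ref{thm03} is genuinely non-trivial (this non-triviality is presumably part of, or an easy consequence of, the genericity statement and the construction there), so the period cannot be one.

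The main obstacle I anticipate is the bookkeeping around the Hodge filtration: one must exhibit an \emph{explicit} filtration $Fil_1'$ on $C_1^{-1}((E_1,\theta_1)\otimes(\mathscr L,0))$ and check that, under the tensor-compatibility isomorphism, $Gr_{Fil_1'}$ corresponds to $Gr_{Fil_1}$ tensored by $(\mathscr L,0)$ — in other words that the filtration data of the flow can also be chosen compatibly with the $\mathscr L$-twist, not merely the underlying bundles. Here one uses $Fil_{tr}$ on the line bundle factor (as already indicated in the statement of Theorem \ref{mainthm}, where the flow is written with $Fil_{n}\otimes Fil_{tr}$), so the filtration on the twist is forced and the compatibility is essentially formal; the only real content is the commutation of $C_1^{-1}$ with tensoring by $(\mathscr L,0)$, which follows from the tensor structure of the inverse Cartier transform in \cite{OV} (or \cite{LSZ0}) together with the fact that $(\mathscr L,0)$ lifts to a flat line bundle with vanishing $p$-curvature coming from the two-torsion structure. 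Once these compatibilities are in place, the corollary follows immediately.
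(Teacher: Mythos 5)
Your proof is correct and its core is essentially the paper's own argument: both rest on the natural isomorphism $C_1^{-1}\bigl((E_1,\theta_1)\otimes(\mathscr{L},0)\bigr)\cong C_1^{-1}(E_1,\theta_1)\otimes(\mathscr{L},\nabla_{can})$, equip the right-hand side with $Fil_1$ tensored with the trivial filtration on the line-bundle factor, and conclude from \eqref{eq00} that the second graded term is $(E_1,\theta_1)\otimes(\mathscr{L}^{\otimes 2},0)\cong(E_1,\theta_1)$.

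One caveat about your closing paragraph, where you try to exclude period one: the paper makes no such claim, and your supporting assertion that $\mathscr{L}$ is non-trivial for a generic curve is not established anywhere and is likely false in general --- the computation in Proposition \ref{AKinter} shows that for a totally degenerate curve the glued invertible subsheaf is exactly $\omega_{\chi/k}^{\frac{p+1}{2}}$ when $\frac{p+1}{2}$ is odd, i.e.\ $\mathscr{L}\cong\mathcal{O}$ there. This does not damage the corollary, since ``two-periodic'' is used throughout in the weak sense that the flow returns to $(E_1,\theta_1)$ after two steps (compare Proposition \ref{proj line}, where a one-periodic bundle is treated as a success case); you should simply drop the minimality argument rather than lean on the non-triviality of $\mathscr{L}$.
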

\begin{proof}
It suffices to show there is some filtration $Fil_2$ on
$$
C_1^{-1}((L_1  \oplus L_1^{-1},\theta_1)\otimes
(\mathscr{L},0))
$$
such that two graded Higgs bundles are isomorphic:
$$
Gr_{Fil_2}\circ C_{1}^{-1}\circ Gr_{Fil_1}\circ C_{1}^{-1}(L_1\oplus
L_1^{-1},\theta_{1})\ \cong \ (L_1  \oplus L_1^{-1},\theta_1).
$$
Since
$
C_1^{-1}((L_1  \oplus L_1^{-1},\theta_1)\otimes
(\mathscr{L},0))$ is naturally isomorphic to
$$
C_1^{-1}(L_1  \oplus L_1^{-1},\theta_1)\otimes
C_1^{-1}(\mathscr{L},0)=C_1^{-1}(L_1  \oplus L_1^{-1},\theta_1)\otimes
(\mathscr{L},\nabla_{can}),
$$
we can equip it with the filtration $Fil_2$ which is the tensor product of the filtration $Fil_1$ on $C_1^{-1}(L_1  \oplus L_1^{-1},\theta_1)$ and the trivial filtration on $\mathscr{L}$. In the previous equality, $\nabla_{can}$ denotes for the canonical connection in the Cartier descent theorem. Then by Theorem \ref{thm03}, it follows that
$$
Gr_{Fil_2}\circ C_{1}^{-1}\circ Gr_{Fil_1}\circ C_{1}^{-1}(L_1\oplus
L_1^{-1},\theta_{1})\cong (L_1\oplus
L_1^{-1},\theta_{1})\otimes (\mathscr{L}^{\otimes 2},0)=(L_1\oplus
L_1^{-1},\theta_{1}).
$$
\end{proof}
To outline the proof of Theorem \ref{thm03}, we start with an observation: For every $W_2$-lifting $X_2$ of  $X_1$, there exists a short exact sequence of flat bundles as follows:
\begin{align}\label{eq01}
0\to (F^*L_1^{-1}, \nabla_{can})\to C^{-1}_{X_1\subset
X_2}(E_1,\theta_1) \to (F^*L_1,\nabla_{can} )\to 0,
\end{align}
where $C^{-1}_{X_1\subset
X_2}$ is the inverse Cartier transform $C_1^{-1}$ and $F: X_1 \to X_1$ is the absolute Frobenius.  Here and also in the following, when the dependence of the inverse Cartier transform on the choice of a $W_2$-lifting $X_2$ of $X_1$ plays a prominent role in the situation, we shall use instead the notation $C^{-1}_{X_1\subset
X_2}$ for the inverse Cartier transform. The above short exact sequence comes after applying the exact functor $C^{-1}_{X_1\subset X_2}$ to the short exact sequence of Higgs bundles:
$$
0\to (L_1^{-1}, 0)\to (E_1,\theta_1) \to (L_1,0)\to 0.
$$
Set $(H,\nabla)=C^{-1}_{X_1\subset
X_2}(E_1,\theta_1)$. Then, ignoring the connection in the exact sequence, we get an extension of vector bundles:
\begin{align}\label{eq02}
0\to F^*L_1^{-1} \to H \to F^*L_1\to 0.
\end{align}
The cohomology group $H^1(X_1,F^*L_1^{-2})\cong \textrm{Ext}^1(F^*L_1,F^*L_1^{-1})$ classifies the isomorphism classes of extensions of form (\ref{eq02}). Let $\{X_1\subset X_2 \}$ be the set of  isomorphism classes of $W_2$-liftings of $X_1$, which is known to be an $H^1(X_1,\mathcal{T}_{X_1/k})$-torsor, where $\mathcal{T}_{X_1/k}$ is the log tangent sheaf of $X_1/k$. To consider the effect of the choices of $W_2$-liftings on the inverse Cartier transform of $(E_1,\theta_1)$, we shall consider the natural map
\begin{align}\label{eq03}
\rho: \{X_1\subset X_2 \} \to H^1(X_1,F^*L_1^{-2}),
\end{align}
obtained by sending $X_1\subset X_2$ to the extension class of $H$ as above. It turns out that the map $\rho$ behaves well under the torsor action and is injective (Lemma \ref{observs} and Lemma \ref{injective}). Set $A=\text{Im}(\rho)$. In order to take into account the connection in the inverse Cartier transform, we shall also consider the isomorphism classes $\textrm{Ext}^1\left((F^*L_1,\nabla_{can}),(F^*L_1^{-1},\nabla_{can}) \right)$ of extensions of flat bundles:
\begin{align}\label{eq06}
0\to (F^*L^{-1}_1, \nabla_{can})\to (H, \nabla )\to (F^*L_1,\nabla_{can}
)\to 0.
\end{align}
There is a natural forgetful map
$$
\textrm{Ext}^1\left((F^*L_1,\nabla_{can}),(F^*L_1^{-1},\nabla_{can}) \right)\to \textrm{Ext}^1(F^*L_1,F^*L_1^{-1})
$$
by forgetting connections in an exact sequence of (\ref{eq06}). We let $B$ denote its image. Thus by the short exact sequence (\ref{eq01}), we see that $A\subset B$. Later we show that $A$ is an affine subspace of the ambient space $H^1(X_1,F^*L_1^{-2})$, not passing through the origin, and $B$ is the linear hull of $A$. On the other hand, if
$C^{-1}_{X_1\subset X_2}(E_1,\theta_1)$ also satisfies
(\ref{eq00}), then it admits a subsheaf $L_1\otimes \mathscr{L}\hookrightarrow
C ^{-1}_{X_1\subset X_2}(E_1,\theta_1)$ with $\mathscr{L}\in \mathrm{Pic}^0(X_1)$. For a given element $\xi\in H^1(X_1,F^*L_1^{-2})$, let $H_{\xi}$ be the corresponding isomorphism class of extension:
\begin{align}\label{eq04}
0\to F^*L_1^{-1}  \to  H_{\xi} \to F^*L_1 \to 0,
\end{align}
Thus this leads us also to considering the subset $K\subset H^1(X_1,F^*L_1^{-2})$, consisting of extensions such that $H_\xi$ admits a subsheaf $L_1\otimes \mathscr{L}\hookrightarrow H_\xi$ for some  $\mathscr{L}\in \text{Pic}^0(X_1)$. $K$ is clearly a cone. We shall call it the \emph{periodic cone} of $X_1/k$.
\begin{lemma}
The periodic cone $K$ can be written in the following form:
\begin{align}\label{eq05}
K=\bigcup_{s\in\mathbb{P}\text{Hom}(L_1\otimes \mathscr{L}, F^*L_1), \mathscr{L}\in \text{Pic}^0(X_1)}\mathrm{Ker}(\phi_s),
\end{align}
where the map on $H^1$
$$
\phi_s: H^1(X_1,F^*L_1^{-1}\otimes F^*L_1^{-1}) \to H^1(X_1, L_1^{-1}\otimes \mathscr{L}^{-1}\otimes F^*L_1^{-1})
$$ is induced from the sheaf morphism $\check s\otimes id$ with $\check{s}: F^*L_1^{-1}\to L_1^{-1}\otimes \mathscr{L}^{-1}$ the dual of $s$.
\end{lemma}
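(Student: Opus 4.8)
The plan is to unwind the definitions and reduce the membership $\xi\in K$ to a purely cohomological vanishing condition. By definition, $\xi\in K$ means the extension $H_\xi$ of $F^*L_1$ by $F^*L_1^{-1}$ admits a subsheaf isomorphic to $L_1\otimes\mathscr L$ for some $\mathscr L\in\Pic^0(X_1)$. First I would note that such an inclusion $L_1\otimes\mathscr L\hookrightarrow H_\xi$ composed with the projection $H_\xi\to F^*L_1$ gives a sheaf map $L_1\otimes\mathscr L\to F^*L_1$; since both are line bundles on a curve, this map is either zero or injective. The case where it is zero would force $L_1\otimes\mathscr L$ to factor through $F^*L_1^{-1}$, i.e.\ $\Hom(L_1\otimes\mathscr L,F^*L_1^{-1})\neq 0$; one checks by a degree count (using $\deg L_1=g-1+r/2>0$ and $\deg F^*L_1^{-1}=-p(g-1+r/2)<0$) that this is impossible, so the map to $F^*L_1$ must be a nonzero element $s\in\Hom(L_1\otimes\mathscr L,F^*L_1)$, well-defined up to scalar, hence a point of $\mathbb P\Hom(L_1\otimes\mathscr L,F^*L_1)$.

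Next I would make the converse–and–quantitative direction precise via the connecting map. Fix $\mathscr L$ and a nonzero $s\in\Hom(L_1\otimes\mathscr L,F^*L_1)$. Lifting the composite $L_1\otimes\mathscr L\xrightarrow{s}F^*L_1$ through $H_\xi\to F^*L_1$ to an honest subsheaf of $H_\xi$ is exactly the problem of splitting the pulled-back extension
\begin{align*}
0\to F^*L_1^{-1}\to s^*H_\xi\to L_1\otimes\mathscr L\to 0,
\end{align*}
whose class is $s^*\xi\in\Ext^1(L_1\otimes\mathscr L,F^*L_1^{-1})\cong H^1(X_1,F^*L_1^{-1}\otimes L_1^{-1}\otimes\mathscr L^{-1})$. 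Now $s^*\xi$ is the image of $\xi\in H^1(X_1,F^*L_1^{-1}\otimes F^*L_1^{-1})$ under the map on $H^1$ induced by $\check s\otimes\id$, where $\check s\colon F^*L_1^{-1}\to L_1^{-1}\otimes\mathscr L^{-1}$ is the dual of $s$ — that is, $s^*\xi=\phi_s(\xi)$ in the notation of the statement. Therefore the subsheaf $L_1\otimes\mathscr L\hookrightarrow H_\xi$ lifting $s$ exists precisely when $\phi_s(\xi)=0$, i.e.\ $\xi\in\Ker(\phi_s)$. Taking the union over all choices of $\mathscr L\in\Pic^0(X_1)$ and all $s$ (only the line $[s]$ matters, since rescaling $s$ rescales $\phi_s$ and does not change the kernel) yields exactly the asserted formula \eqref{eq05}.

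I would organize the write-up as: (i) recall $\Ext^1(\mathscr F,\mathscr G)\cong H^1(X_1,\mathscr F^{\vee}\otimes\mathscr G)$ for line bundles on the smooth projective curve $X_1$, and the standard fact that pulling back an extension along $s\colon\mathscr M\to F^*L_1$ corresponds on $H^1$ to the map induced by $\check s\otimes\id$ on $F^*L_1^{-1}$; (ii) the degree computation ruling out $\Hom(L_1\otimes\mathscr L,F^*L_1^{-1})\neq 0$, which also shows any nonzero $s$ is automatically a bundle injection away from its vanishing divisor so the resulting subsheaf is a line subbundle only up to saturation — but since we only need a subsheaf isomorphic to $L_1\otimes\mathscr L$ this is harmless; (iii) the two inclusions $K\subseteq\bigcup\Ker(\phi_s)$ and $\bigcup\Ker(\phi_s)\subseteq K$, each immediate from (i) once (ii) is in place. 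The only genuinely delicate point is bookkeeping the identification of the pullback class with $\phi_s(\xi)$ compatibly with the isomorphisms $\Ext^1\cong H^1$ on both sides — i.e.\ checking that "pull back the extension along $s$" and "apply $\phi_s$ on cohomology" really are the same arrow and not off by a dualization or a sign; everything else is routine. Note also that for $\mathscr L$ with $\Hom(L_1\otimes\mathscr L,F^*L_1)=0$ the indexing set is empty and contributes nothing, consistently with the fact that then no such subsheaf can map nonzero to $F^*L_1$.
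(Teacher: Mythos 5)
Your proposal is correct and follows essentially the same route as the paper: rule out the zero composite $L_1\otimes\mathscr L\to F^*L_1$ by a degree count, then identify lifting $s$ to a subsheaf of $H_\xi$ with the splitting of the pulled-back extension, whose class is exactly $\phi_s(\xi)$. The only difference is that you spell out the degree computation and the $\Ext^1\cong H^1$ bookkeeping more explicitly than the paper does, which is harmless.
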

\begin{proof}
If $H_{\xi}$ admits a subsheaf $L_1\otimes \mathscr{L}$ with $\mathscr{L}\in \text{Pic}^0(X_1)$, then the composite $L_1\otimes \mathscr{L}\hookrightarrow  H_{\xi}\twoheadrightarrow F^*L_1$ cannot be zero since, otherwise, one gets a nonzero morphism $L_1\otimes \mathscr{L}\to F^*L_1^{-1}$ which contradicts the fact  $$\deg(L_1\otimes \mathscr{L})>0>\deg(F^*L_1^{-1}).$$ So for a chosen nonzero element $s\in \text{Hom}(L_1\otimes \mathscr{L}, F^*L_1)$, our task is to describe those extensions $H_{\xi}$ such that $s$ is liftable to a morphism $L_1\otimes \mathscr{L}\to H_{\xi}$. Pulling back the extension $H_{\xi}$ along $s$, one obtains an extension of $L_1\otimes \mathscr{L}$ by $F^*L_1^{-1}$ whose class is given by the image of $\xi$ under the map
$$
\phi_s: H^1(X_1,F^*L_1^{-1}\otimes F^*L_1^{-1}) \stackrel{\check s\otimes id}{\to} H^1(X_1, L_1^{-1}\otimes \mathscr{L}^{-1}\otimes F^*L_1^{-1}).
$$
It is clear that $s$ lifts to a morphism $L_1\otimes \mathscr{L}\to H_{\xi}$ iff the pull-back extension of $H_{\xi}$ along $s$ is split, that is, $\phi_s(\xi)=0$.
\end{proof}

Viewing $H^1(X_1, F^*L_1^{-2})$ as the affine space, we shall see that both $A$ and $K$ are actually closed subvarieties, see Propositions \ref{affine} and  \ref{cone}. One of the main points of the paper is to show that for a generic curve $X_1\in \mathscr{M}_{g,r}$, $A\cap K\neq \emptyset$. This fact has the consequence that one finds then a $W_2$-lifting $X_2$ and a Hodge filtration $Fil_1$ on $C^{-1}_{X_1\subset X_2}(E_1,\theta_1)$ such that the isomorphism (\ref{eq00}) holds, see Proposition \ref{prop20}. This is the way we prove Theorem \ref{thm03}. We use a degeneration argument to show $A\cap K\neq \emptyset$ for a generic curve. In fact, we consider a totally degenerate curve of genus $g$ with $r$-marked points in \S4, where we are able to prove the intersection $A\cap K$ consists of a unique element and consequently is nonempty. In \S5, we show that $A$s and $K$s form families when the base curve $X_1$ deforms and then, by upper semi-continuity, it follows that $A\cap K\neq \emptyset$ for closed points in a nonempty open neighborhood of a totally degenerate curve.

Now we turn to the lifting problem of a periodic Higgs bundle. The key is to resolve the obstruction class of lifting the Hodge filtration. For this purpose, we make the following
\begin{definition}\label{ordinary section}
 Assume that  $\mathscr{L} \in\text{Pic}^0(X_1)$.
We call  $s\in \text{Hom}(L_1\otimes\mathscr{L}, F^*L_1)$ ordinary if the composite \begin{align} \label{ordinary1}
H^1(X_1, L_1^{-2})\overset{F^*}\to H^1(X_1, F^*L_1^{-2})
\overset{\check{s}^2}\to H^1(X_1, L_1^{-2}\otimes \mathscr{L}^{-2})
\end{align}
is injective, where  the second map  is induced by the sheaf morphism $\check{s}^{2}$.
\end{definition}
This definition is equivalent to the one introduced by Mochizuki in \cite{Mo}, although it was stated in a totally different form. In our context, the ordinariness ensures the existence of lifting Hodge filtrations.
See \S\ref{lifting section}. Denote by $(E_n, \theta_n, Fil_n, \phi_n)$ the two-periodic Higgs bundle over $X_n$ in \eqref{tower}. This brings us to Theorem \ref{thm04}.

\begin{theorem}\label{thm04}
Use notation as in Theorem \ref{thm03}. Then with respect to a $W_2$-lifting $X_2$ of $X_1$, $(E_1,\theta_1)$ becomes a two-periodic Higgs bundle $(E_1, \theta_1, Fil_1, \phi_1)$. Let $s$ be the composite $Fil_1\hookrightarrow C_{1}^{-1}(E_{1},\theta_{1}) \twoheadrightarrow F^*L_1$,
where  $Fil_1$ is of the form $L_1\otimes \mathscr{L}$ for a two-torsion line bundle $\mathscr{L}$ and the second map is given in \eqref{eq02}. If $s$ is ordinary,
 then for all $n\geq 1$, inductively there exists a  $W_{n+1}$-lifting $X_{n+1}$ of $X_n$ such that the two-periodic Higgs bundle
 $(E_{n-1}, \theta_{n-1}, Fil_{n-1} , \phi_{n-1} )$  over $X_{n-1}$ can be lifted to
  a two-periodic Higgs bundle $(E_n, \theta_n, Fil_n , \phi_n )$ over $X_n$.
\end{theorem}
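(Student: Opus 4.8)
The argument is by induction on $n$. For $n=1$ the assertion is Theorem \ref{thm03} together with Corollary \ref{cor of thm 03}, which furnish the $W_2$‑lifting $X_2$ for which $(E_1,\theta_1)$ becomes two‑periodic. So let $n\ge 2$, assume the tower $X_1\subset\cdots\subset X_n$ and the two‑periodic Higgs bundle $(E_{n-1},\theta_{n-1},Fil_{n-1},\phi_{n-1})$ over $X_{n-1}$ (with its $2$‑torsion line bundle $\mathscr{L}_{n-1}$) have been built, and let us produce a $W_{n+1}$‑lifting $X_{n+1}$ of $X_n$ and the lifted data over $X_n$. First one disposes of the rigid pieces. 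A $W_{n+1}$‑lifting $X_{n+1}$ of $X_n$ exists, since the obstruction lies in $H^2(X_1,\mathcal{T}_{X_1/k})=0$, and the set of such liftings is a torsor under $H^1(X_1,\mathcal{T}_{X_1/k})=H^1(X_1,L_1^{-2})$; this freedom will be spent below. Because $p$ is odd, multiplication by $2$ is an automorphism of $H^1(X_1,\mathcal{O}_{X_1})$, so $L_{n-1}$ has a unique lift $L_n$ over $X_n$ with $L_n^{\otimes 2}\cong\Omega^1_{X_n/W_n}(\log D_n)$; this yields $(E_n,\theta_n):=(L_n\oplus L_n^{-1},\theta_n)$ with $\theta_n$ tautological and $\det(E_n,\theta_n)=(\mathcal{O}_{X_n},0)$. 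Put $(H_n,\nabla_n):=C_n^{-1}(E_n,\theta_n)$ with respect to $X_{n+1}$; compatibility of the inverse Cartier transform with reduction along the tower gives that $(H_n,\nabla_n)$ reduces to $(H_{n-1},\nabla_{n-1})$ modulo $p^{n-1}$, while $\det H_n\cong\mathcal{O}_{X_n}$, being the inverse Cartier transform of a trivial rank‑one Higgs bundle.

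The heart of the proof is lifting the Hodge line subbundle $Fil_{n-1}^1\subset H_{n-1}$ to a line subbundle $Fil_n^1\subset H_n$. Since $Fil_1^1\cong L_1\otimes\mathscr{L}$ and $H_1/Fil_1^1\cong L_1^{-1}\otimes\mathscr{L}$, the rank‑one subbundle $Fil_{n-1}^1$ defines a section of the associated $\mathbb{P}^1$‑bundle over $X_{n-1}$ whose normal bundle is $(Fil_1^1)^{-1}\otimes(H_1/Fil_1^1)\cong L_1^{-2}$; hence a lift $Fil_n^1$ exists if and only if a single obstruction class $\mathrm{ob}(X_{n+1})\in H^1(X_1,L_1^{-2})$ vanishes, and when it does the lift is unique because $H^0(X_1,L_1^{-2})=H^0(X_1,\mathcal{T}_{X_1/k})=0$ for a hyperbolic curve. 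The key point is that $\mathrm{ob}(X_{n+1})$ depends affinely on the choice of $X_{n+1}$, with linear part the operator of the definition of ordinariness: varying $X_{n+1}$ by $\delta\in H^1(X_1,\mathcal{T}_{X_1/k})=H^1(X_1,L_1^{-2})$ alters the extension class of $H_n$ in $H^1(X_1,F^*L_1^{-2})$ by the Frobenius pullback $F^*\delta$ (as follows from the behaviour under the torsor action of the analogue of the map $\rho$ of \eqref{eq03}, whose infinitesimal part is Frobenius pullback), and reading this off on $\mathrm{ob}$ — restricting an endomorphism to $Fil^1$ and projecting to the quotient, which at the level of sheaves is the composition $\check s\circ(-)\circ s=\check{s}^2$ — changes $\mathrm{ob}(X_{n+1})$ by the image of $\delta$ under
\[
H^1(X_1,L_1^{-2})\xrightarrow{\ F^*\ }H^1(X_1,F^*L_1^{-2})\xrightarrow{\ \check{s}^2\ }H^1(X_1,L_1^{-2}\otimes\mathscr{L}^{-2})\cong H^1(X_1,L_1^{-2}),
\]
the last isomorphism because $\mathscr{L}$ is $2$‑torsion. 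Ordinariness of $s$ says this composite is injective; having finite‑dimensional source and target of equal dimension, it is then bijective, so a (unique) choice of $X_{n+1}$ gives $\mathrm{ob}(X_{n+1})=0$ and $Fil_n^1$ exists.

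It remains to see that $Fil_n^1$ is a Hodge filtration restoring a two‑periodic structure, and to lift $\phi_{n-1}$. Griffiths transversality is automatic for a line subbundle of a rank‑two flat bundle, and the induced Higgs field $\bar\theta_n\colon Fil_n^1\to(H_n/Fil_n^1)\otimes\Omega^1_{X_n/W_n}(\log D_n)$ reduces modulo $p$ to the maximal Higgs field $\theta_1\otimes\mathrm{id}$, hence is an isomorphism by Nakayama. Setting $\mathscr{L}_n:=Fil_n^1\otimes L_n^{-1}\in\mathrm{Pic}^0(X_n)$, the identity $Fil_n^1\otimes(H_n/Fil_n^1)=\det H_n\cong\mathcal{O}_{X_n}$ combined with $Fil_n^1\cong(H_n/Fil_n^1)\otimes L_n^{\otimes 2}$ (from $\bar\theta_n$ being an isomorphism) forces $\mathscr{L}_n^{\otimes 2}\cong\mathcal{O}_{X_n}$, so $\mathscr{L}_n$ is the unique $2$‑torsion lift of $\mathscr{L}_{n-1}$ and $Gr_{Fil_n}C_n^{-1}(E_n,\theta_n)\cong(E_n,\theta_n)\otimes(\mathscr{L}_n,0)$. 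An isomorphism $\phi_n$ realizing this can be chosen lifting $\phi_{n-1}$, since two such differ by a Higgs automorphism of the stable bundle $(L_n\oplus L_n^{-1},\theta_n)\otimes(\mathscr{L}_n,0)$, i.e.\ by a scalar in $W_n^{\times}$, and $W_n^{\times}\twoheadrightarrow W_{n-1}^{\times}$. Running this through the second application of $C_n^{-1}$, where $C_n^{-1}((E_n,\theta_n)\otimes(\mathscr{L}_n,0))\cong C_n^{-1}(E_n,\theta_n)\otimes(\mathscr{L}_n,\nabla_{can})$ is equipped with $Fil_n\otimes Fil_{tr}$ exactly as in the proof of Corollary \ref{cor of thm 03} and $\mathscr{L}_n^{\otimes 2}\cong\mathcal{O}_{X_n}$ is used once more, produces the two‑periodic Higgs–de Rham flow \eqref{tower} over $X_n$ with all the congruences modulo $p^{n-1}$; the nilpotency exponent ($\le 1$, as $\theta_n^2=0$) and the filtration level ($\le 1$) stay $\le p-2$ since $p\ge 3$. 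The one genuinely hard step is the displayed identification in the middle paragraph — pinning down the infinitesimal dependence of $C_n^{-1}$ on the $W_{n+1}$‑lifting and tracing it through the pullback along $s$ to recover the ordinariness operator; the remaining steps are unobstructed on a curve, rigid because $p$ is odd, or open conditions surviving reduction modulo $p$.
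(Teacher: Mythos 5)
Your proposal is correct and follows essentially the same route as the paper: identify the obstruction to lifting $Fil_{n-1}^1$ in $H^1(X_1,\Hom(Fil_1^1,H_1/Fil_1^1))$, show it depends affinely on the choice of $W_{n+1}$-lifting with semilinear part $\check{s}^2\circ F^*$, invoke ordinariness to make that part bijective and so kill the obstruction, then conclude as in Corollary \ref{cor of thm 03}. The one step you assert rather than prove --- that the infinitesimal variation of the obstruction class under the torsor action is exactly $\check{s}^2\circ F^*$ --- is precisely the content of the paper's Lemma \ref{cohclass} and Proposition \ref{anti-obstr}, established there by an explicit \v{C}ech computation with the Frobenius liftings and the Taylor-formula gluing maps $G_{ij}$ of the inverse Cartier transform.
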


We are now in a position to give a simple proof of Theorem \ref{mainthm}.

\begin{proof}[Proof of Theorem \ref{mainthm}]
 With the ordinary condition  ensured by   Proposition \ref{ordinary} below, Theorem \ref{mainthm} is  a direct consequence of Corollary \ref{cor of thm 03} and Theorem \ref{thm04}.
\end{proof}

\section{The smooth case}
In this section, we begin to investigate the properties of the subsets $A$ and $K$ in the case where the base curve $X_1/k$ is a smooth projective curve of genus $g$ together with $r$-marked points $D_1\subset X_1$. It is required that $2g-2+r$ be an even positive number. To conform with the notation in later sections, we use the fine logarithmic structures defined in \cite{KKa} and notation therein. In particular, we equip $X_1$ with the standard log structure defined by the divisor $D_1$ (Example 1.5 (1) \cite{KKa}) and the base $\mathrm{Spec}\ k$ with the trivial log structure (so that the structural morphism $X_1\to \mathrm{Spec}\ k$ is log smooth Example 3.7 (1) \cite{KKa}); $\omega_{X_1/k}$ denotes the invertible sheaf of differential forms with respect to these log structures (1.7 \cite{KKa}).

\subsection{General discussion}
In the last section, we introduced the subsets $A$ and $K$ of the affine space $H^1(X_1,F^*L_1^{-2})$. We proceed to study some basic geometric properties of these two subsets and their intersection behavior.

Let $\sigma$ be the Frobenius automorphism of the field $k$. Then for any vector bundle $V$ over $X_1$, the absolute Frobenius of $X_1$ induces a $\sigma$-linear (i.e. semilinear) morphism $F^*: H^i(X_1,V)\to H^i(X_1,F^*V)$ for $i\geq 0$. Indeed, it is the composite of natural morphisms:
$$
H^i(X_1,V)\cong H^i(X_1,V\otimes_{\sO_{X_1}} \sO_{X_1})\to H^i(X_1,V\otimes_{\sO_{X_1}} F_*\sO_{X_1})\cong H^i(X_1,F^*V).
$$
In our case, we denote by $W_F$ the image of the natural map $F^*: H^1(X_1, L_1^{-2})\to H^1(X_1, F^*L_1^{-2})$. Then we have
\begin{proposition}\label{affine}
The subset $A\subset H^1(X_1, F^*L_1^{-2})$ is a \emph{nontrivial} translation of the linear subspace $W_F$. Moreover, $\dim A=3(g-1)+r$.
\end{proposition}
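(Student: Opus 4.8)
The plan is to analyze the map $\rho\colon\{X_1\subset X_2\}\to H^1(X_1,F^*L_1^{-2})$ together with its compatibility with the torsor structure on the left-hand side. Recall $\{X_1\subset X_2\}$ is an $H^1(X_1,\mathcal{T}_{X_1/k})$-torsor, where $\mathcal{T}_{X_1/k}=\omega_{X_1/k}^{-1}$ is the log tangent sheaf. First I would establish the key additivity statement (this is the content of the cited Lemma \ref{observs}): under the torsor action, $\rho$ intertwines the action of $v\in H^1(X_1,\mathcal{T}_{X_1/k})$ on liftings with translation by a \emph{linear} map $H^1(X_1,\mathcal{T}_{X_1/k})\to H^1(X_1,F^*L_1^{-2})$ whose image is precisely $W_F$. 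Concretely, changing the $W_2$-lifting by a class $v$ changes the inverse Cartier transform of $(E_1,\theta_1)$ by modifying the gluing/connection data via $F^*v$, and the effect on the extension class \eqref{eq02} of $H$ is to add the image of $F^*v$ under the cup-product / contraction map
\[
H^1(X_1,\mathcal{T}_{X_1/k})\xrightarrow{F^*} H^1(X_1,F^*\mathcal{T}_{X_1/k})\to H^1(X_1,F^*L_1^{-2}),
\]
the last arrow coming from $\theta_1^2$, i.e. the square of the Higgs field $L_1\cong L_1^{-1}\otimes\omega_{X_1/k}$, which identifies $F^*\mathcal{T}_{X_1/k}=F^*\omega_{X_1/k}^{-1}$ with $F^*L_1^{-2}$. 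Since $\omega_{X_1/k}^{-1}=L_1^{-2}$ already (because $L_1^2\cong\omega_{X_1/k}$), this composite is literally $F^*\colon H^1(X_1,L_1^{-2})\to H^1(X_1,F^*L_1^{-2})$, so its image is exactly $W_F$.

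Granting this, the set $A=\operatorname{Im}(\rho)$ is a single orbit under translation by $W_F$ inside $H^1(X_1,F^*L_1^{-2})$, hence a translate (coset) of the linear subspace $W_F$; this is the first assertion, \emph{provided} $\{X_1\subset X_2\}$ is nonempty, which holds since $(X_1,D_1)$ is assumed $W_2$-liftable. To see the translation is \emph{nontrivial}, i.e. $0\notin A$, I would argue that if some lifting $X_2$ gave the split extension $H\cong F^*L_1^{-1}\oplus F^*L_1$, then $(H,\nabla)=C^{-1}_{X_1\subset X_2}(E_1,\theta_1)$ would be a nontrivial extension of flat bundles by \eqref{eq01} while its underlying bundle splits; one then derives a contradiction, e.g. by computing that a horizontal splitting of $F^*L_1\hookrightarrow H$ would be forced (the obstruction to splitting the connection lies in a group that injects appropriately), or equivalently by invoking the fact that $(H,\nabla)$ is the inverse Cartier transform of a Higgs-\emph{stable} bundle of degree zero and so cannot contain $(F^*L_1,\nabla_{can})$ of positive slope as a sub-\emph{flat}-bundle — contradicting a splitting. (This is the content of Lemma \ref{injective}, which also gives injectivity of $\rho$, though injectivity is not needed for this proposition.)

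Finally, for the dimension: since $A$ is a torsor under $W_F$, we have $\dim A=\dim W_F$. Now $W_F$ is the image of $F^*\colon H^1(X_1,L_1^{-2})\to H^1(X_1,F^*L_1^{-2})$. Since $F$ is finite flat of degree $p$ and $F^*$ is $\sigma$-linear and injective on cohomology (the natural map $V\to F_*F^*V$ is split injective as $\mathcal{O}_{X_1}$ is a direct summand of $F_*\mathcal{O}_{X_1}$), we get $\dim W_F=\dim_k H^1(X_1,L_1^{-2})$. By Serre duality for the log-smooth curve $X_1/k$, $H^1(X_1,L_1^{-2})^\vee\cong H^0(X_1,L_1^{2}\otimes\omega_{X_1/k})=H^0(X_1,\omega_{X_1/k}^{\otimes 2})$, using $L_1^2\cong\omega_{X_1/k}$; and $H^0(X_1,\omega_{X_1/k}^{\otimes 2})$ has dimension $3g-3+r$ by Riemann–Roch (for $2g-2+r>0$ and $g\ge 2$, or a direct check in low genus, the $H^1$ of $\omega^{\otimes 2}$ vanishes since $\deg\omega_{X_1/k}^{\otimes 2}=2(2g-2+r)>2g-2$). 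Hence $\dim A=3(g-1)+r$, as claimed.

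The main obstacle I expect is pinning down, with the correct signs and twists, the precise linear map describing the effect of changing the $W_2$-lifting on the extension class of $H$ — that is, verifying that it is $F^*$ composed with the identification coming from $\theta_1^2$ and that its image is $W_F$ (rather than some proper subspace or a translate). This is essentially a careful unwinding of the local construction of $C^{-1}_{X_1\subset X_2}$ in terms of the lifting data, and of how the exact sequence \eqref{eq01} transforms; it is exactly what Lemma \ref{observs} is set up to provide, so in the write-up I would quote that lemma and keep the present argument at the level of assembling its output. A secondary, minor point is the nonvanishing $0\notin A$, which must genuinely use that $(E_1,\theta_1)$ has \emph{maximal} Higgs field (so the relevant extension of flat bundles is nonsplit for every lifting), not merely that it is some rank-two Higgs bundle.
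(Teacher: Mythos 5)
Your treatment of the main structural claim follows the paper exactly: the torsor compatibility $\rho(\tau+\nu)=\rho(\tau)+F^*\check{\theta}_1(\nu)$ is Lemma \ref{observs}(1), nontriviality of the translation is Lemma \ref{observs}(2) via Lemma \ref{injective} and the nonvanishing $p$-curvature, and the identification of the linear part with $W_F$ via $L_1^{-2}\cong\mathcal{T}_{X_1/k}$ is as in diagram \eqref{eq201}. That part is fine.

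The dimension computation, however, contains two genuine errors. First, your justification for the injectivity of $F^*$ on $H^1$ --- that $\mathcal{O}_{X_1}$ is a direct summand of $F_*\mathcal{O}_{X_1}$ --- is false for curves of genus $g\geq 2$: by duality, $\Hom(F_*\mathcal{O}_{X_1},\mathcal{O}_{X_1})\cong H^0(X_1,\Omega_{X_1}^{\otimes(1-p)})=0$ since this sheaf has negative degree, so no splitting exists (a general curve of genus $\geq 2$ is not Frobenius split). The injectivity is genuinely nontrivial here --- the paper devotes Lemma \ref{injective} to it, proving it by the degree comparison $\deg F^*L_1>\deg(F^*L_1^{-1}\otimes\omega_{X_1/k})$, which forces any bundle splitting to be horizontal, followed by Cartier descent; you should cite that lemma rather than appeal to Frobenius splitting. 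Second, your Serre duality step uses the log sheaf $\omega_{X_1/k}$ as the dualizing sheaf, which is wrong: $H^1(X_1,L_1^{-2})^\vee\cong H^0(X_1,L_1^2\otimes\Omega^1_{X_1})$, not $H^0(X_1,L_1^2\otimes\omega_{X_1/k})=H^0(X_1,\omega_{X_1/k}^{\otimes 2})$. The latter has dimension $3g-3+2r$ by Riemann--Roch (degree $2(2g-2+r)$, vanishing $H^1$), not $3g-3+r$ as you assert; your two slips cancel to give the stated answer, but the chain of equalities is false whenever $r>0$. The clean route is to apply Riemann--Roch directly to $L_1^{-2}\cong\mathcal{T}_{X_1/k}$ of degree $-(2g-2+r)$: since $h^0=0$, one gets $h^1(X_1,L_1^{-2})=3g-3+r$, whence $\dim A=\dim W_F=3(g-1)+r$.
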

We postpone the proof.  We point out first that $W_F$ is actually a linear subspace of $B$. Indeed, one can also interpret the natural map $F^*: H^1(X_1,L_1^{-2})\to H^1(X_1,F^*L_1^{-2})$ in terms of extensions. That is, it is the map obtained by pulling back an extension of form
\begin{align}\label{eq09}
0\to L_1^{-1} \to E\to L_1\to 0
\end{align}
via the absolute Frobenius to an extension of form
\begin{align}\label{eq009}
0\to F^*L_1^{-1} \to F^*E\to F^*L_1\to 0.
\end{align}
Note that the latter extension is tautological to the extension equipped with the canonical connections:
\begin{align}\label{eq0009}
0\to (F^*L_1^{-1},\nabla_{can}) \to (F^*E,\nabla_{can})\to (F^*L_1,\nabla_{can})\to 0.
\end{align}
Thus, we see that $W_F$ lies in $B$. In fact, there is also a natural map $H^1(X_1,L_1^{-2})$ to $\textrm{Ext}^1\left((F^*L_1,\nabla_{can}),(F^*L_1^{-1},\nabla_{can}) \right)$. To see this, we let $H^1_{dR}:=H^1_{dR}(F^*L_1^{-2},\nabla_{can})$ be the first hypercohomology of the de Rham complex
$$
\Omega_{dR}^*(F^*L_1^{-2},\nabla_{can}):=F^*L_{1}^{-2}\stackrel{\nabla_{can}}{\longrightarrow} F^*L_1^{-2}\otimes \omega_{X_1/k}.
$$
There are natural morphisms of complexes
$$
L_1^{-2}\to \Omega_{dR}^*(F^*L_1^{-2},\nabla)
$$
and
$$
\Omega_{dR}^*(F^*L_1^{-2},\nabla)\to F^*L_1^{-2}.
$$
Identifying $\textrm{Ext}^1\left((F^*L_1,\nabla_{can}),(F^*L_1^{-1},\nabla_{can}) \right)$ with $H^1_{dR}$, one sees that the former morphism induces the map on hypercohomologies
$$
\beta: H^1(X_1,L_1^{-2})\to  \textrm{Ext}^1\left((F^*L_1,\nabla_{can}),(F^*L_1^{-1},\nabla_{can}) \right),
$$
while the latter induces
$$
\alpha:  \textrm{Ext}^1\left((F^*L_1,\nabla_{can}),(F^*L_1^{-1},\nabla_{can}) \right)\to H^1(X_1,F^*L^{-2}_1),
$$
which is just the forgetful map defining $B$. Moreover, the composite map
$$
\alpha\circ \beta: H^1(X_1,L_1^{-2})\to H^1(X_1,F^*L^{-2}_1)
$$
is the map $F^*$ defining $W_F$. An injectivity result is now in order.
\begin{lemma}\label{injective}
 Both the map $\alpha: H^1_{dR}(F^*L_1^{-2},\nabla_{can})\to H^1(X_1,F^*L_1^{-2})$ and the map $\alpha\circ\beta=F^*: H^1(X_1,L_1^{-2})\to H^1(X_1,F^*L^{-2}_1)$ are injective.
 \end{lemma}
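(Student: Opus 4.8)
The plan is to reduce both injectivity assertions to a single cohomology vanishing on $X_1$ and then to deduce that vanishing from a degree count. Since $L_1^{2}\cong\omega_{X_1/k}$ and the absolute Frobenius $F$ is finite of degree $p$, one computes
\[
\deg\bigl(F^*L_1^{-2}\otimes\omega_{X_1/k}\bigr)\;=\;-p\deg\omega_{X_1/k}+\deg\omega_{X_1/k}\;=\;(1-p)(2g-2+r)\;<\;0,
\]
using that $p$ is odd and $2g-2+r>0$; similarly $\deg F^*L_1^{-2}=-p(2g-2+r)<0$. A line bundle of negative degree on a smooth projective curve has no nonzero global section, so both $H^0(X_1,F^*L_1^{-2})$ and $H^0(X_1,F^*L_1^{-2}\otimes\omega_{X_1/k})$ vanish. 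These two vanishings will do essentially all the work.

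For the injectivity of $\alpha$, I would use the tautological short exact sequence of complexes
\[
0\longrightarrow\bigl(F^*L_1^{-2}\otimes\omega_{X_1/k}\bigr)[-1]\longrightarrow\Omega_{dR}^*(F^*L_1^{-2},\nabla_{can})\longrightarrow F^*L_1^{-2}\longrightarrow0,
\]
whose surjection $\Omega_{dR}^*(F^*L_1^{-2},\nabla_{can})\to F^*L_1^{-2}$ is exactly the morphism of complexes inducing $\alpha$ on first hypercohomology. The connecting homomorphisms of the associated long exact hypercohomology sequence are the maps induced by $\nabla_{can}$, so $\ker\alpha$ gets identified with the cokernel of $\nabla_{can}\colon H^0(X_1,F^*L_1^{-2})\to H^0(X_1,F^*L_1^{-2}\otimes\omega_{X_1/k})$. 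The target vanishes, whence $\ker\alpha=0$.

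For $F^*=\alpha\circ\beta$: since $\alpha$ is then already known to be injective, it is enough to prove $\beta$ injective. Here I would invoke the (logarithmic) Cartier isomorphism for the Frobenius-pulled-back connection $(F^*L_1^{-2},\nabla_{can})$, i.e. the four-term exact sequence
\[
0\to L_1^{-2}\to F_*F^*L_1^{-2}\xrightarrow{\ F_*\nabla_{can}\ }F_*\bigl(F^*L_1^{-2}\otimes\omega_{X_1/k}\bigr)\xrightarrow{\ C\ }L_1^{-2}\otimes\omega_{X_1/k}\to0 .
\]
Under the identification of $L_1^{-2}$ with $\mathcal{H}^0\bigl(\Omega_{dR}^*(F^*L_1^{-2},\nabla_{can})\bigr)=\ker\nabla_{can}$, the map $\beta$ becomes the edge homomorphism $H^1(X_1,L_1^{-2})\to H^1_{dR}(F^*L_1^{-2},\nabla_{can})$ of the conjugate (second) hypercohomology spectral sequence; as $X_1$ is a curve, $E_2^{p,q}=0$ for $p\ge2$, so this edge map is the inclusion of the bottom step of the conjugate filtration and is therefore injective. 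Equivalently, using that $F$ is finite (so $H^i(X_1,F_*\mathcal{G})=H^i(X_1,\mathcal{G})$ for all $\mathcal{G}$), one splits the four-term sequence into two short exact sequences and chases directly: the kernel of $F^*$ on $H^1$ becomes a subquotient of $H^0(X_1,F^*L_1^{-2}\otimes\omega_{X_1/k})=0$.

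I do not expect a substantial obstacle: the content of the lemma is entirely the negativity of the degrees above. The points that need care are purely bookkeeping — checking that the connecting maps in the two long exact sequences are indeed the ones induced by $\nabla_{can}$ and by the Cartier operator $C$, and making sure $\beta$ is identified with the \emph{conjugate} edge map and not with an edge map for the Hodge filtration (which would not be injective).
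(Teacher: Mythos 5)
Your argument is correct, and it reaches both injectivity statements by a more homological route than the paper. The paper works with extensions: a class in $\ker\alpha$ corresponds to a flat extension $(H,\nabla)$ of $(F^*L_1,\nabla_{can})$ by $(F^*L_1^{-1},\nabla_{can})$ whose underlying bundle sequence splits; the second fundamental form of such a splitting is an $\sO_{X_1}$-linear map $\bar\nabla\colon F^*L_1\to F^*L_1^{-1}\otimes\omega_{X_1/k}$, which must vanish because $\deg F^*L_1=\tfrac{p}{2}(2g-2+r)>\tfrac{1-p}{2}(2g-2+r)=\deg(F^*L_1^{-1}\otimes\omega_{X_1/k})$; the claim for $F^*$ is then deduced from Cartier descent. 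You replace this by the long exact hypercohomology sequence of the stupid filtration (for $\alpha$) and the conjugate spectral sequence, equivalently the four-term Cartier sequence (for $\beta$). The two proofs consume exactly the same inputs: the paper's degree inequality \emph{is} your vanishing $H^0(X_1,F^*L_1^{-2}\otimes\omega_{X_1/k})=0$, and its appeal to Cartier descent is your identification $\mathcal H^0\bigl(\Omega_{dR}^*(F^*L_1^{-2},\nabla_{can})\bigr)\cong L_1^{-2}$. Your packaging isolates the single responsible vanishing and is closer in spirit to the computation of $\dim B$ in Lemma \ref{dimension of B}; the paper's packaging has the advantage that the same nonsplitting-by-degree argument is reused verbatim later (in Proposition \ref{prop20} and in the totally degenerate case). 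The two delicate points you flag at the end are indeed the only ones, and you resolve them correctly: the edge map must be the conjugate one, where on a curve $E_2^{1,0}=E_\infty^{1,0}\hookrightarrow\mathbb{H}^1$ since $E_2^{-1,1}=0$ and $E_2^{3,-1}=0$; and in the logarithmic case ($r>0$) all of $\omega_{X_1/k}$, $\nabla_{can}$ and the Cartier sequence must be taken in the log sense, which is harmless because $\ker\bigl(d\colon\sO_{X_1}\to\omega_{X_1/k}\bigr)=\sO_{X_1}^p$ still holds there.
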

 \begin{proof}
Consider first the map $\alpha$. We prove by contradiction. Assume the contrary. Then, there exists a nonsplit exact sequence (\ref{eq06}) which becomes split if ignoring the connections in the sequence. Let $F^*L_1\hookrightarrow H$ be a splitting of vector bundles. Since its image cannot be preserved by the connection $\nabla$, the associated Higgs bundle by taking grading with respect to the image must have nonzero Higgs field. That is,
$$
\bar \nabla: F^*L_1 \to (H/F^*L_1\cong F^*L^{-1}_1)\otimes \omega_{X_1/k}
$$
is nonzero. But, since
$$
\deg F^*L_1=\frac{p}{2}(2g-2+r)>\frac{2-p}{2}(2g-2+r)=\deg (F^*L^{-1}_1\otimes \omega_{X_1/k}),
$$
this is a contradiction. The injectivity for the map $F^*$ also follows. Indeed, the previous argument shows if the exact sequence (\ref{eq009}) is split then the exact sequence (\ref{eq0009}) with connections is also split. Then one applies the classical Cartier descent theorem (viz, not the logarithmic one) to conclude the splitting of (\ref{eq09}).
\end{proof}
\begin{remark}\label{general inject}
The above argument for $F^*$ shows the following injectivity statement: for a smooth projective variety $X/k$ and a line bundle $L$ over $X$ satisfying $\deg L>\frac{\mu_{\max}(\Omega_X)}{p}$, where $\mu_{\max}(\Omega_X)$ is the maximal slope of subsheaves of $\Omega_X$, the natural morphism $F^*: H^1(X, L^{-1})\to H^1(X,F^*L^{-1})$
is injective. It can be also directly deduced from the proof of Lemma 2.23 \cite{La}.
\end{remark}

By the above lemma, one computes the dimension of $W_F$ (and hence of $A$) by Riemann-Roch. The dimension of $H^1_{dR}$ is computed via its natural isomorphism to its corresponding Higgs cohomolgy \cite{DI},\cite{OV}. Define $H^1_{Hig}:=H^1_{Hig}(L_1^{-2},0)$ to be the hypercohomology of the Higgs complex
$$
\Omega^*_{Hig}(L_1^{-2},0):=L_1^{-2}\stackrel{0}{\to} L_1^{-2}\otimes \omega_{X_1/k}.
$$
Clearly,
$$
H^1_{Hig}\cong H^1(X_1,L_1^{-2})\oplus H^0(X_1,L_1^{-2}\otimes \omega_{X_1/k}\cong \sO_{X_1}).
$$
\begin{lemma}\label{dimension of B}
There is a natural isomorphism $$H^1_{dR}(F^*L_{1}^{-2},\nabla_{can})\cong H^1_{Hig}(L_1^{-2},0).$$ Therefore $\dim B= \dim A+1$.
\end{lemma}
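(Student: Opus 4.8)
The plan is to obtain the displayed isomorphism from the compatibility of the (inverse) Cartier transform with cohomology, and then simply read off the dimensions. First I would observe, as is already visible from the exact sequence \eqref{eq01}, that $(F^*L_1^{-2},\nabla_{can})$ is nothing but the inverse Cartier transform $C_1^{-1}(L_1^{-2},0)$ of the Higgs bundle with zero Higgs field, taken with respect to the fixed $W_2$-lifting $X_2$; in particular the nilpotency/level hypothesis is vacuous here since the Higgs field vanishes. The Cartier isomorphism of Deligne--Illusie, extended to the setting of the Cartier transform by Ogus--Vologodsky and adapted to the logarithmic situation in \cite{LSZ} (see also \cite{DI},\cite{OV}), then furnishes a canonical isomorphism
\[
H^i_{dR}\bigl(C_1^{-1}(E,\theta)\bigr)\ \cong\ H^i_{Hig}(E,\theta)
\]
for every nilpotent logarithmic Higgs bundle $(E,\theta)$ of exponent $\leq p-1$ on the $W_2$-liftable $(X_1,D_1)$. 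Applying this with $(E,\theta)=(L_1^{-2},0)$ yields $H^1_{dR}(F^*L_1^{-2},\nabla_{can})\cong H^1_{Hig}(L_1^{-2},0)$. Since $X_1$ is a curve one could also argue more directly: the conjugate spectral sequence of the two-term complex $\Omega^*_{dR}(F^*L_1^{-2},\nabla_{can})$ degenerates at $E_2$ for dimension reasons ($H^2(X_1,-)=0$), so only the classical Cartier isomorphism on cohomology sheaves is needed.

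For the dimension count I would then use that the Higgs complex $\Omega^*_{Hig}(L_1^{-2},0)$ has zero differential, hence $H^1_{Hig}(L_1^{-2},0)\cong H^1(X_1,L_1^{-2})\oplus H^0(X_1,L_1^{-2}\otimes\omega_{X_1/k})$ as already recorded; and since $L_1^2\cong\omega_{X_1/k}$, the second summand is $H^0(X_1,\mathcal{O}_{X_1})$, of dimension one. On the other hand, by Lemma \ref{injective} the forgetful map $\alpha$ is injective, so $B\cong\mathrm{Ext}^1\bigl((F^*L_1,\nabla_{can}),(F^*L_1^{-1},\nabla_{can})\bigr)\cong H^1_{dR}(F^*L_1^{-2},\nabla_{can})$ and thus $\dim B=\dim H^1_{dR}$; while Proposition \ref{affine} together with the injectivity of $F^*=\alpha\circ\beta$ (again Lemma \ref{injective}) gives $\dim A=\dim W_F=\dim H^1(X_1,L_1^{-2})$. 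Combining, $\dim B=\dim H^1(X_1,L_1^{-2})+1=\dim A+1$.

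The only substantive input is the cohomological compatibility of the Cartier transform; everything else is bookkeeping with facts already in hand. The point to be careful about is purely one of formulation: one must ensure the isomorphism is natural with respect to the chosen lifting $X_2$ and valid in the logarithmic setting relative to $(X_2,D_2)$ — but this is precisely what the framework of \cite{LSZ} and the appendix provide, and here it is applied in the simplest possible case (zero Higgs field), so no genuine difficulty arises.
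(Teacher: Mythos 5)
Your proposal is correct and follows essentially the same route as the paper: both obtain the isomorphism from the cohomological compatibility of the (inverse) Cartier transform (Ogus--Vologodsky Corollary 2.27, realized via the Deligne--Illusie splitting and valid in this log setting) applied to the zero-Higgs-field object $(L_1^{-2},0)$, and then carry out the identical dimension bookkeeping using Lemma \ref{injective}. Your side remark that on a curve the conjugate spectral sequence degenerates for dimension reasons is a harmless shortcut that suffices for the equality $\dim B=\dim A+1$, though the \emph{natural} isomorphism asserted in the statement still requires the explicit quasi-isomorphism coming from the $W_2$-lifting, which is exactly what the paper invokes.
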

\begin{proof}
See Corollary 2.27 \cite{OV} for the case $D_1=\emptyset$. However in the zero Higgs field case, one extends directly the splitting formula due to Deligne-Illusie \cite{DI} to construct an explicit quasi-isomorphism from the Higgs complex $\Omega^*_{Hig}(L_1^{-2},0)$ to the simple complex associated to the  \v{C}ech double complex of $F_*(\Omega^*_{dR}(F^*L_1^{-2},\nabla_{can}))$. See, for example, the proof of Theorem 10.7 \cite{EV}. In particular, the argument works also in this special log case.
\end{proof}

The proof of the following proposition will be postponed to a later subsection.
\begin{proposition}\label{cone}
The periodic cone $K$ is a closed subvariety of $H^1(X_1, F^*L_1^{-2})$. Moreover $\dim K=(p-1)(2g-2+r)$.
\end{proposition}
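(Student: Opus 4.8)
The plan is to identify $K$, via Serre duality on the curve $X_1$, with an affine cone over a secant variety of a projective embedding of $X_1$, and then to read off both closedness and dimension from the classical theory of secant varieties of curves. First I would reparametrize the index set of \eqref{eq05}. Put $N:=(p-1)\deg L_1=\tfrac12(p-1)(2g-2+r)$, so that $2N$ is the asserted dimension, write $M:=F^*L_1\otimes L_1^{-1}$ (a line bundle of degree $N>0$), and view $V:=H^1(X_1,F^*L_1^{-2})$ as an affine space. Since $L_1^2\cong\omega_{X_1/k}$ one has $L_1\otimes\mathscr L\cong F^*L_1(-D)$ whenever $\mathscr L=M(-D)$, and conversely a nonzero $s\in\Hom(L_1\otimes\mathscr L,F^*L_1)$ with $\mathscr L\in\Pic^0(X_1)$ is, up to a scalar, the inclusion $s_D\colon F^*L_1(-D)\hookrightarrow F^*L_1$ for $D$ its zero divisor (an effective divisor of degree $N$, which in turn determines $\mathscr L=M(-D)$). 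Thus the index set of \eqref{eq05} is $\mathrm{Sym}^N(X_1)$ and $K=\bigcup_{D\in\mathrm{Sym}^N(X_1)}\mathrm{Ker}(\phi_{s_D})$, where $\phi_{s_D}\colon V\to H^1(X_1,F^*L_1^{-2}(D))$ is the map on $H^1$ induced by the sheaf inclusion $F^*L_1^{-2}\hookrightarrow F^*L_1^{-2}(D)$. Both sheaves have negative degree, so their $H^0$ vanishes, and the long exact sequence gives $\mathrm{Ker}(\phi_{s_D})\cong H^0$ of the length-$N$ torsion cokernel; in particular $\dim\mathrm{Ker}(\phi_{s_D})=N$, \emph{independently of $D$}.

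Next I would dualize. Let $W:=V^{\vee}=H^0(X_1,\Omega^1_{X_1}\otimes F^*L_1^2)$ by Serre duality, with $\Omega^1_{X_1}=\omega_{X_1/k}(-D_1)$ the ordinary (non-log) canonical sheaf. The Serre transpose of $\phi_{s_D}$ is the inclusion $H^0(X_1,\Omega^1_{X_1}\otimes F^*L_1^2(-D))\hookrightarrow W$, so $\mathrm{Ker}(\phi_{s_D})\subset V$ is the annihilator of $H^0(X_1,\Omega^1_{X_1}\otimes F^*L_1^2(-D))$. A direct count gives $\deg(\Omega^1_{X_1}\otimes F^*L_1^2)=(2g-2)+p(2g-2+r)\ge 2g+1$ (since $p\ge3$ and $2g-2+r\ge2$), so the complete linear system $|\Omega^1_{X_1}\otimes F^*L_1^2|$ embeds $X_1$ as a nondegenerate curve $\iota\colon X_1\hookrightarrow\P(V)$, and $H^0(X_1,\Omega^1_{X_1}\otimes F^*L_1^2(-D))$ is exactly the space of hyperplanes through the length-$N$ subscheme $\iota(D)$. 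Hence $\mathrm{Ker}(\phi_{s_D})$ is the affine cone over the linear span $\langle\iota(D)\rangle\cong\P^{N-1}$, and
\[
K=\text{(affine cone over) }\bigcup_{D\in\mathrm{Sym}^N(X_1)}\langle\iota(D)\rangle=\text{(affine cone over) }\mathrm{Sec}_N(\iota(X_1)),
\]
the $N$-th secant variety of $\iota(X_1)$.

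Closedness and dimension then follow formally. Because $\dim\mathrm{Ker}(\phi_{s_D})\equiv N$, the subspaces $\mathrm{Ker}(\phi_{s_D})$ glue over $S:=\mathrm{Sym}^N(X_1)$: with the universal divisor $\mathcal{D}\subset X_1\times S$, cohomology and base change produce a rank-$N$ subbundle $\mathcal{K}$ of the trivial bundle $V\otimes_k\sO_S$ with fibre $\mathrm{Ker}(\phi_{s_D})$ over $D$, and $\P(\mathcal{K})$ is a $\P^{N-1}$-bundle over the projective variety $S$, hence projective; its image in $\P(V)$ is $\mathrm{Sec}_N(\iota(X_1))$, which is therefore closed (and irreducible), and so is its affine cone $K$. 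From $\dim\P(\mathcal{K})=\dim S+(N-1)=2N-1$ we get $\dim K\le 2N$. For the reverse inequality I would invoke the classical fact that an irreducible nondegenerate curve $C\subset\P^m$ satisfies $\dim\mathrm{Sec}_j(C)=\min\{2j-1,m\}$ — proved, in any characteristic, via $\mathrm{Sec}_{j+1}(C)=\mathrm{Join}(C,\mathrm{Sec}_j(C))$ together with the fact that joining with a nondegenerate curve raises dimension by exactly $2$ until $\P^m$ is reached (because the locus of vertices of a cone is a linear subspace). Here $m=\dim V-1=p(2g-2+r)+g-2$ and $2N-1=(p-1)(2g-2+r)-1$, and $m-(2N-1)=3g-3+r>0$ (a direct check using $2g-2+r\ge2$ and that $r$ is even), so $2N-1<m$; therefore $\dim\mathrm{Sec}_N(\iota(X_1))=2N-1$ and $\dim K=2N=(p-1)(2g-2+r)$.

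The step I expect to be the real obstacle is this last inequality: a priori the $N$-planes $\mathrm{Ker}(\phi_{s_D})$ could overlap so much that $\dim K<2N$, and it is precisely the secant-variety reformulation that forbids this, reducing the matter to the (elementary, characteristic-free) non-defectivity of secant varieties of curves. The remaining ingredients — the reparametrization by $\mathrm{Sym}^N(X_1)$, the Serre-duality identification of $\mathrm{Ker}(\phi_{s_D})$ with a secant cone, and the base-change globalization giving closedness and the upper bound — are routine.
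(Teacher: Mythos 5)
Your proof is correct, and while its set-up is essentially a reparametrization of the paper's, the decisive step --- the lower bound $\dim K\geq (p-1)(2g-2+r)$ --- is obtained by a genuinely different mechanism. Your index set $\mathrm{Sym}^N(X_1)$ (via $s\mapsto\mathrm{Div}(s)$) is the paper's projective bundle $\P$ over $\mathrm{Pic}^0(X_1)$, your rank-$N$ subbundle $\mathcal{K}$ is the paper's incidence variety $\mathrm{Ker}(\phi)$, and closedness comes in both cases from properness of the projection to $H^1(X_1,F^*L_1^{-2})$; so up to this point the two arguments coincide (your base over $\mathrm{Sym}^N$ even sidesteps the constancy of $h^0(M\otimes\mathscr{L}^{-1})$ that the paper needs for local freeness of $\V$, since $h^0(\sO_D)=N$ is automatic). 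The divergence is in showing the projection does not drop dimension. The paper's Lemma \ref{nointer} computes pairwise intersections explicitly, $\mathrm{Ker}(\phi_s)\cap\mathrm{Ker}(\phi_{s'})=H^0(X_1,\sO_{D_{s,s'}})$ with $D_{s,s'}=\mathrm{Div}(s)\cap\mathrm{Div}(s')$ --- in your dual picture this is precisely the linear general position statement $\langle\iota(D)\rangle\cap\langle\iota(D')\rangle=\langle\iota(D\cap D')\rangle$ --- and deduces generic injectivity of $\pi_1$, hence $\dim K=\dim\mathrm{Ker}(\phi)$. You instead identify $\mathbb{P}(K)$ with the $N$-th secant variety of $X_1$ embedded by the complete system $|\Omega^1_{X_1}\otimes F^*L_1^2|$ and invoke non-defectivity of secant varieties of irreducible nondegenerate curves. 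That reduction is legitimate, and you are right to justify the classical fact by the join/vertex-of-a-cone argument rather than Terracini's lemma, since the latter fails in characteristic $p$ while the cone argument is characteristic-free; your numerical checks ($\dim\mathrm{Ker}(\phi_{s_D})=N$, $m-(2N-1)=3g-3+r>0$) are also correct. What each route buys: yours identifies $K$ with a classical object and outsources the general-position issue to a standard theorem; the paper's is self-contained and yields the finer information that $\pi_1$ is injective away from an explicit proper closed subset (equivalently, exactly how two of the $N$-planes meet), which is strictly stronger than non-defectivity.
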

Next, we explain the intersection behavior of $A$ and $K$ inside the ambient space $H^1(X_1, F^*L_1^{-2})$. A simple dimension calculation shows
\begin{align}\label{eq07}
\dim A + \dim K = \dim H^1(X_1, F^*L_1^{-2}).
\end{align}
Let $\mathbb P:=\mathbb{P}(H^1(X_1, F^*L_1^{-2}))$ be the associated projective space and $$p: H^1(X_1, F^*L_1^{-2}) \backslash \{ 0\} \to \mathbb{P}$$ be the natural map. By Proposition \ref{affine}, $p(A)\subset \mathbb P$ is an affine space of the same dimension as $A$. Denote by $\mathbb{P}(K)$ the projectivized periodic cone, and similarly use notation $\mathbb{P}(W_F)$ and $\mathbb{P}(B)$. Now, by the above discussion, we get
\begin{align}\label{eq007}
\dim \mathbb{P}(B)+\dim \mathbb{P}(K)=\dim \mathbb{P}.
\end{align}
Hence $\mathbb{P}(B)$ and $\mathbb{P}(K)$ always intersect. For each
$\xi\in  \mathbb{P}(B)\cap \mathbb{P}(K)$, the corresponding extension $(H,\nabla)$ admits an invertible subsheaf $L_1\otimes\mathscr{L}$ for some
$\mathscr{L}\in \text{Pic}^0(X_1)$ and $\xi\in \mathrm{Ker}(\phi_s)$ with $s$ the composite map $L_1\otimes \mathscr{L} \hookrightarrow H\to F^*L_1$. This is almost what we need to prove the periodicity but not exactly: the next proposition shows that those $\xi$s in the subset $p(A)\cap \mathbb{P}(K)$, if nonempty, give rise to periodicity. But clearly
$$
p(A)\cap \mathbb{P}(K)=\mathbb{P}(B)\cap \mathbb{P}(K)-\mathbb{P}(W_F)\cap \mathbb{P}(K).
$$

\begin{proposition}\label{prop20}
If $p(A)\cap \mathbb{P}(K)\neq\emptyset$ or equivalently $A\cap K\neq \emptyset$, then $(E_1, \theta_1)$ is a two-periodic Higgs bundle, that is, there exists a $W_2$-lifting $X_2$ of $X_1$ and a Hodge filtration $Fil_1$  such that
\begin{align}
&Gr_{Fil_1}\circ C ^{-1}_{X_1\subset X_2}(E_1,\theta_1)\cong (E_1,\theta_1)\otimes (\mathscr{L},0),
 \end{align}
with $\mathscr{L}$ a certain two-torsion line bundle.
\end{proposition}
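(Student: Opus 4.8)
The plan is to manufacture the Hodge filtration directly from a point of $A\cap K$. First I would pick $\xi\in A\cap K$. Since $\xi\in A=\mathrm{Im}(\rho)$ with $\rho$ the map \eqref{eq03}, there is a $W_2$-lifting $X_2$ of $X_1$ whose image under $\rho$ is $\xi$; I set $(H,\nabla):=C^{-1}_{X_1\subset X_2}(E_1,\theta_1)$, so that the underlying vector bundle $H$ is the bundle $H_\xi$ of \eqref{eq04}. Since $\xi\in K$, the bundle $H$ contains a subsheaf $L_1\otimes\mathscr{L}$ for some $\mathscr{L}\in\Pic^0(X_1)$; let $\bar N\subseteq H$ be its saturation, which is a line subbundle. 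I would then take $Fil_1$ to be the two-step filtration $0\subset\bar N\subset H$: Griffiths transversality is vacuous for a two-step filtration and the level and nilpotency bounds hold because $p\ge 3$, so this is a genuine Hodge filtration on $(H,\nabla)$. Writing $\bar\theta\colon\bar N\to(H/\bar N)\otimes\omega_{X_1/k}$ for the Higgs field of $Gr_{Fil_1}(H,\nabla)$, the goal becomes to show $Gr_{Fil_1}(H,\nabla)\cong(E_1,\theta_1)\otimes(\mathscr{L},0)$ with $\mathscr{L}$ two-torsion; observe that $\bar\theta$ vanishes precisely when $\bar N$ is $\nabla$-horizontal.

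The heart of the matter is the claim that no $\nabla$-horizontal subbundle of $(H,\nabla)$ has non-negative degree; in particular $\bar\theta\neq 0$. To prove this I would use that $C^{-1}_{X_1\subset X_2}$ is, by Ogus--Vologodsky, an equivalence between the category of nilpotent logarithmic Higgs modules of exponent $\le p-1$ and the category of nilpotent logarithmic flat modules of exponent $\le p-1$ (\cite{OV}, \cite{LSZ0}, and the Appendix), so it matches the lattices of subobjects of $(E_1,\theta_1)$ and of $(H,\nabla)$. A $\nabla$-horizontal line subbundle $V\subseteq H$ has nilpotent $p$-curvature (the $p$-curvature of an inverse Cartier transform of a nilpotent Higgs module is nilpotent, and restriction to a subbundle preserves this), hence, being of rank one on a curve, vanishing $p$-curvature; thus $V$ is such a subobject and corresponds to a rank one $\theta_1$-stable coherent subsheaf $M\subseteq E_1$. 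A direct inspection of the maximal Higgs bundle $(E_1,\theta_1)=(L_1\oplus L_1^{-1},\theta_1)$, whose Higgs field is the off-diagonal isomorphism $L_1\xrightarrow{\sim}L_1^{-1}\otimes\omega_{X_1/k}$ (and $\mathrm{Hom}(L_1,L_1^{-1})=H^0(L_1^{-2})=0$ since $\deg L_1^{-2}=-(2g-2+r)<0$), shows that the only rank one $\theta_1$-stable subsheaves are those contained in the summand $L_1^{-1}$, on which $\theta_1$ vanishes; for such an $M$ one has $C^{-1}(M,0)=F^*M$, whence $\deg V=p\deg M\le p\deg L_1^{-1}<0$. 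Since $\deg\bar N\ge\deg(L_1\otimes\mathscr{L})=\deg L_1=g-1+\tfrac r2>0$, the subbundle $\bar N$ is not $\nabla$-horizontal, so $\bar\theta\neq 0$.

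It then remains to identify the graded object by degree bookkeeping. As $\det H=F^*L_1^{-1}\otimes F^*L_1\cong\mathcal{O}_{X_1}$ and $\bar N$ is a line subbundle, $H/\bar N\cong\bar N^{-1}$. Writing $\bar N=L_1\otimes\mathscr{L}\otimes\mathcal{O}_{X_1}(\Delta)$ with $\Delta\ge 0$ effective of degree $d$, and using $\omega_{X_1/k}\cong L_1^{\otimes 2}$, the nonzero map $\bar\theta$ becomes a nonzero map $L_1\otimes\mathscr{L}(\Delta)\to L_1\otimes\mathscr{L}^{-1}(-\Delta)$ of line bundles, which forces $(g-1+\tfrac r2)+d\le(g-1+\tfrac r2)-d$, hence $d=0$ and $\bar N=L_1\otimes\mathscr{L}$. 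Then $\bar\theta\colon L_1\otimes\mathscr{L}\to L_1\otimes\mathscr{L}^{-1}$ is a nonzero map between line bundles of equal degree, hence an isomorphism, so $\mathscr{L}^{\otimes 2}\cong\mathcal{O}_{X_1}$ and $\mathscr{L}$ is two-torsion. Finally $L_1\otimes\mathscr{L}^{-1}\cong L_1^{-1}\otimes\mathscr{L}\otimes\omega_{X_1/k}$, and since $\mathrm{Hom}(L_1\otimes\mathscr{L},\,L_1^{-1}\otimes\mathscr{L}\otimes\omega_{X_1/k})\cong H^0(\mathcal{O}_{X_1})$ is one dimensional, $\bar\theta$ agrees with the tautological isomorphism up to a nonzero scalar; rescaling one summand gives $Gr_{Fil_1}\circ C^{-1}_{X_1\subset X_2}(E_1,\theta_1)\cong(E_1,\theta_1)\otimes(\mathscr{L},0)$, which is exactly the assertion. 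I expect the claim $\bar\theta\neq 0$ to be the main obstacle: the degree estimates are routine, but one has to be careful that $C^{-1}_{X_1\subset X_2}$ is exact and that its restriction to the relevant log-nilpotent subcategories genuinely is an equivalence, so that $\nabla$-horizontal subbundles of $(H,\nabla)$ are detected by $\theta_1$-stable subsheaves of $(E_1,\theta_1)$.
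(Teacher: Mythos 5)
Your proof is correct, and its overall skeleton matches the paper's: pick $\xi\in A\cap K$, realize $H_\xi$ as $C^{-1}_{X_1\subset X_2}(E_1,\theta_1)$ for the lifting $X_2$ with $\rho(X_2)=\xi$, saturate the subsheaf $L_1\otimes\mathscr{L}$, show the saturation is not $\nabla$-invariant, and then squeeze the degrees between $\deg(L_1\otimes\mathscr{L})$ and $\tfrac12\deg\omega_{X_1/k}$ to force equality, an isomorphic graded Higgs field, and $\mathscr{L}^{\otimes 2}\cong\sO_{X_1}$. Where you genuinely diverge is at the central non-horizontality step. The paper argues directly with $p$-curvature: if the saturation were horizontal it would be a second line subbundle (besides $F^*L_1^{-1}$) on which the nilpotent, hence zero, $p$-curvature lives; two distinct such subbundles generically span $H_\xi$, forcing the $p$-curvature of $H_\xi$ to vanish identically, which contradicts the fact that the inverse Cartier transform of a maximal Higgs field has nowhere-vanishing $p$-curvature. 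You instead invoke the full Ogus--Vologodsky equivalence plus Cartier descent to transport any horizontal line subbundle back to a rank-one $\theta_1$-stable subsheaf of $(E_1,\theta_1)$, classify those (they all sit inside $L_1^{-1}$), and conclude that every horizontal line subbundle of $H_\xi$ has degree at most $p\deg L_1^{-1}<0$. Your route is heavier machinery for the same conclusion, but it buys a sharper statement (an explicit negative upper bound on the degree of \emph{every} horizontal line subbundle, not just the one at hand) and isolates cleanly where exactness and the equivalence of the log-nilpotent subcategories are used; the paper's argument is more elementary and self-contained, needing only the nilpotence and nonvanishing of the $p$-curvature rather than the categorical equivalence.
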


\begin{proof}
Take $\xi\in A\cap K$. By definition, the corresponding extension is of the form $H_\xi=C ^{-1}_{X_1\subset X_2}(E_1, \theta_1)$ for some $W_2$-lifting $X_2$ of $X_1$ and $H_\xi$ has a subsheaf  $L_1\otimes \mathscr{L}\hookrightarrow H_\xi$ for $\mathscr{L}\in \text{Pic}^0(X_1)$.

Next, we show that the subsheaf $L_1\otimes \mathscr{L}\hookrightarrow H_\xi$
is saturated and not $\nabla$-invariant.  Let $Fil_1$ be
the saturated subbundle of  $L_1\otimes \mathscr{L}\hookrightarrow H_\xi$. We claim that  $Fil_1$ is not $\nabla$-invariant. Otherwise,
since the $p$-curvature of $\nabla$ on $H_\xi$ is nilpotent,
the  $p$-curvature of $\nabla$ on $Fil_1$  would be
zero; as $H_\xi$ has another subbundle $F^*L_1^{-1}$ with trivial
$p$-curvature, the $p$-curvature of $H_\xi$ would have to be zero. But this is a contradiction since $H_\xi$ is the inverse Cartier of a Higgs bundle with maximal Higgs field and its $p$-curvature is therefore nonzero (actually nowhere zero).

Having now established that $Fil_1$ is not $\nabla$-invariant, one gets a nonzero Higgs field
$$
\bar \nabla: (Fil_1)\to (Fil_1)^{-1}\otimes \omega_{X_1/k}.
$$
Thus one obtains relations:
$$
\frac{2g-2+r}{2}=\deg (L_1\otimes \mathscr{L})\leq \deg Fil_1\leq \frac{1}{2}\deg \omega_{X_1/k}=\frac{2g-2+r}{2}.
$$
It follows immediately that $Fil_1\cong L_1\otimes
\mathscr{L}$, the Higgs field $\bar \nabla$ is an
isomorphism, and $\mathscr{L}$ is a two-torsion. To summarize, one gets
$$
Gr_{Fil_1}\circ C_{X_1\subset X_2}^{-1}(E_1,\theta_1)\cong (E_1,\theta_1)\otimes (\mathscr{L},0)
$$
as claimed.
\end{proof}
The last proposition reveals the intimate relation of the intersection $A\cap K$ and the periodicity of the Higgs bundle $(E_1,\theta_1)$ with maximal Higgs field. In the case of $\mathbb{P}^1$ with four marked points, one can directly show $A\cap K\neq \emptyset$. In general, we can only show the nonempty intersection for a generic $X_1$ in the moduli using a degeneration argument.

\begin{proposition}\label{proj line}
Let $X_1$ be a $\mathbb{P}^1$ with four marked points. Then the Higgs bundle $(E_1=\sO(1)\oplus \sO(-1),\theta_1)$ of type (\ref{Higgsbdl}) is one-periodic.
\end{proposition}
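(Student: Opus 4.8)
The plan is to verify the hypothesis $A\cap K\neq\emptyset$ of Proposition \ref{prop20} by direct inspection, and then read off \emph{one}-periodicity (rather than merely two-periodicity) from $\mathrm{Pic}^0(\mathbb{P}^1)=0$. First I would record the numerics. With $g=0$, $r=4$ one has $2g-2+r=2$ (even and positive), $\omega_{X_1/k}\cong\sO_{\mathbb{P}^1}(2)$, hence $L_1\cong\sO_{\mathbb{P}^1}(1)$ is its unique square root, $F^*L_1^{-2}\cong\sO_{\mathbb{P}^1}(-2p)$, so the ambient affine space $H^1(X_1,F^*L_1^{-2})$ has dimension $2p-1$ and $\mathbb{P}=\mathbb{P}(H^1(X_1,F^*L_1^{-2}))\cong\mathbb{P}^{2p-2}$. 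By Proposition \ref{affine}, $\dim A=3(g-1)+r=1$; by Lemma \ref{dimension of B}, $\dim B=\dim A+1=2$; and $\dim W_F=1$ by Lemma \ref{injective}. Hence inside $\mathbb{P}$, the set $\mathbb{P}(B)$ is a line and $\mathbb{P}(W_F)\subset\mathbb{P}(B)$ is a single point. By Proposition \ref{cone}, $\dim K=(p-1)(2g-2+r)=2p-2$, so $\mathbb{P}(K)$ has codimension $1$; in particular \eqref{eq007} holds, and as noted just after it, $\mathbb{P}(B)\cap\mathbb{P}(K)\neq\emptyset$.

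The only real work is to promote this to $p(A)\cap\mathbb{P}(K)\neq\emptyset$, which by Proposition \ref{prop20} is equivalent to $A\cap K\neq\emptyset$. Since $p(A)=\mathbb{P}(B)\setminus\mathbb{P}(W_F)$ and $\mathbb{P}(W_F)$ is a single point, it suffices to show that this point is \emph{not} on $\mathbb{P}(K)$, i.e.\ that a nonzero vector spanning $W_F$ lies outside $K$. By Lemma \ref{injective} such a vector equals $F^*\eta$ for a nonzero $\eta\in H^1(\mathbb{P}^1,\sO(-2))=\mathrm{Ext}^1(\sO(1),\sO(-1))$, and under the extension-theoretic description of the map $F^*$ the bundle underlying $F^*\eta$ is $F^*E$, where $E$ is the total space of the (unique up to scalar) nonsplit extension $0\to\sO(-1)\to E\to\sO(1)\to 0$ on $\mathbb{P}^1$. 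The key computation is that this forces $E\cong\sO_{\mathbb{P}^1}^{\oplus 2}$ (routine $\Hom$ and degree computations rule out the splitting types $\sO(a)\oplus\sO(-a)$ with $a\ge 1$), hence $F^*E\cong\sO_{\mathbb{P}^1}^{\oplus 2}$. Since $\Hom(\sO(1),\sO_{\mathbb{P}^1}^{\oplus 2})=H^0(\mathbb{P}^1,\sO(-1))^{\oplus 2}=0$, and $\mathrm{Pic}^0(\mathbb{P}^1)=0$ makes $L_1\otimes\mathscr{L}=L_1=\sO(1)$ the only admissible subsheaf type, the bundle $F^*E$ carries no subsheaf of the form $L_1\otimes\mathscr{L}$; by the definition of the periodic cone $K$ (cf.\ \eqref{eq05}) this says $F^*\eta\notin K$.

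I expect this last computation to be the crux: it is what guarantees that the (automatic) intersection point of $\mathbb{P}(B)$ and $\mathbb{P}(K)$ cannot be the ``point at infinity'' $\mathbb{P}(W_F)$, so it must lie in $p(A)$. Granting it, $A\cap K\neq\emptyset$, and Proposition \ref{prop20} produces a $W_2$-lifting $X_2$ and a Hodge filtration $Fil_1$ with $Gr_{Fil_1}\circ C^{-1}_{X_1\subset X_2}(E_1,\theta_1)\cong(E_1,\theta_1)\otimes(\mathscr{L},0)$ for a two-torsion $\mathscr{L}\in\mathrm{Pic}^0(\mathbb{P}^1)=0$; thus $\mathscr{L}\cong\sO_{\mathbb{P}^1}$, the grading recovers $(E_1,\theta_1)$ itself, and $(E_1,\theta_1)$ is one-periodic.
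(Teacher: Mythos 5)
Your proof is correct and follows essentially the same route as the paper's: both reduce to showing $W_F\cap K=0$ via the observation that the unique nonsplit extension of $\sO(1)$ by $\sO(-1)$ has middle term $\sO^{\oplus 2}$, whose Frobenius pullback is again trivial and so admits no invertible subsheaf of positive degree. You merely spell out two steps the paper leaves implicit (the dimension bookkeeping and the fact that $\mathrm{Pic}^0(\mathbb{P}^1)=0$ forces $\mathscr{L}\cong\sO_{\mathbb{P}^1}$, upgrading two-periodicity to one-periodicity), which is fine.
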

\begin{proof}
By the above discussion it suffices to show that $\mathbb{P}(W_F)\cap \mathbb{P}(K)=\emptyset$, or equivalently $W_F\cap K=0$. First notice that the only nontrivial extension of $\sO(-1)$ by $\sO(1)$ over $\mathbb{P}^1$ is isomorphic to $\sO^{\oplus 2}$. Therefore, $H_{\xi}$ corresponding to a nonzero $\xi\in W_F$ is also isomorphic to $\sO^{\oplus 2}$. It follows that $H_\xi$ does not admit any invertible subsheaf of positive degree, which implies $\xi\notin K$ and then $W_F\cap K=0$.
\end{proof}

\subsection{Proof of Proposition \ref{affine}}
Let $\check {\theta}_1:  \mathcal{T}_{X_1/k}\stackrel{\cong}{\to} L_1^{-2}$ be the isomorphism induced by the Higgs field. By abuse of notation, we use it to also denote the induced isomorphism on cohomologies. The following diagram commutes:
\begin{align}\label{eq201}
\xymatrix{
H^1(X_1,  \mathcal{T}_{X_1/k}) \ar[d]^{F^*} \ar[rr]^-{ \check{\theta}_1} && H^1(X_1, L_1^{-2}) \ar[d]^{F^*} \\
H^1(X_1, F^*  \mathcal{T}_{X_1/k}) \ar[rr]^{F^*(\check{\theta}_1)} &&H^1(X_1,F^* L_1^{-2}). }
\end{align}

Proposition \ref{affine} follows from  the following observations.
\begin{lemma}\label{observs}
\begin{enumerate}
\item
Consider the map $\rho: \{X_1\subset X_2 \} \to H^1(X_1,F^*L_1^{-2})$ defined in (\ref{eq03}). Then for $\tau\in \{X_1\subset X_2 \}$ and $\nu \in H^1(X_1,  \mathcal{T}_{X_1/k})$, it holds that
\begin{align}\label{eq203}
\rho(\tau+\nu)=\rho(\tau)-F^*\circ\check{\theta}_1(\nu).
\end{align}
\item The image of $\rho$ does not pass through the origin of $H^1(X_1, F^*L_1^{-2})$.
\end{enumerate}
\end{lemma}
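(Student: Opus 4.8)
The plan is to extract both statements from the explicit local description of the inverse Cartier transform (see \cite{LSZ0}, and the Appendix for the logarithmic situation). First I would fix an affine open cover $\mathfrak{U}=\{U_i\}$ of $X_1$ fine enough that each $U_i$ admits a flat $W_2$-lift together with a chosen (log) Frobenius lift. A class $\tau\in\{X_1\subset X_2\}$ then determines, on each $U_i$, a (log) Frobenius lift $\tilde F_i$, hence a local flat bundle $C^{-1}_{U_i}(E_1,\theta_1)=(F^*E_1|_{U_i},\nabla_i)$; on the overlaps $U_{ij}$ the canonical gluing isomorphism is $\exp$ of the contraction of $F^*\theta_1$ with the derivation $(\tilde F_i-\tilde F_j)/p$, and $(H,\nabla)=C^{-1}_{X_1\subset X_2}(E_1,\theta_1)$ is the bundle-with-connection obtained by gluing the $(F^*E_1|_{U_i},\nabla_i)$ along these isomorphisms. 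Because $\theta_1$ is strictly triangular for the flag $L_1^{-1}\subset E_1$, one has $(\nu\lrcorner\theta_1)^2=0$ for every local section $\nu$ of $\mathcal{T}_{X_1/k}$, so $\exp(F^*(\nu\lrcorner\theta_1))=\mathrm{id}+F^*(\nu\lrcorner\theta_1)$; reading off the off-diagonal ($F^*L_1\to F^*L_1^{-1}$) component, the class $\rho(\tau)\in H^1(X_1,F^*L_1^{-2})$ is represented by the \v{C}ech cocycle $\bigl\{(F^*\theta_1)\lrcorner\!\bigl((\tilde F_i-\tilde F_j)/p\bigr)\bigr\}_{ij}$.

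For part (1) I would use that the $H^1(X_1,\mathcal{T}_{X_1/k})$-torsor structure on $\{X_1\subset X_2\}$ is, through this dictionary, exactly translation of the \v{C}ech $1$-cocycle $\{(\tilde F_i-\tilde F_j)/p\}$ by a cocycle $\{\nu_{ij}\}$ representing the given class $\nu$. Thus $\rho(\tau+\nu)$ is represented by $\bigl\{(F^*\theta_1)\lrcorner\!\bigl((\tilde F_i-\tilde F_j)/p+\nu_{ij}\bigr)\bigr\}=\bigl\{(F^*\theta_1)\lrcorner((\tilde F_i-\tilde F_j)/p)\bigr\}+\bigl\{F^*(\nu_{ij}\lrcorner\theta_1)\bigr\}$. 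Since contracting $\theta_1$ against a vector field is precisely the sheaf isomorphism $\check\theta_1\colon\mathcal{T}_{X_1/k}\xrightarrow{\ \sim\ }L_1^{-2}$, the second summand represents $F^*(\check\theta_1(\nu))$, which gives \eqref{eq203}; the equality $F^*\circ\check\theta_1=F^*(\check\theta_1)\circ F^*$ then follows from the commutativity of \eqref{eq201}, or one simply notes both sides are the same map.

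For part (2) I would argue by contradiction: assume $\rho(\tau)=0$, so the underlying extension $0\to F^*L_1^{-1}\to H\to F^*L_1\to 0$ splits as bundles, and pick a bundle-theoretic section $s\colon F^*L_1\hookrightarrow H$. If $s(F^*L_1)$ is $\nabla$-horizontal, then, the quotient connection on $H/F^*L_1^{-1}\cong F^*L_1$ being $\nabla_{can}$ by \eqref{eq06}, one gets $(H,\nabla)\cong(F^*L_1^{-1},\nabla_{can})\oplus(F^*L_1,\nabla_{can})$, whose $p$-curvature vanishes; but $(H,\nabla)$ is the inverse Cartier transform of a Higgs bundle with maximal Higgs field, hence has nowhere-vanishing $p$-curvature (as already used in the proof of Proposition~\ref{prop20}), a contradiction. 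If $s(F^*L_1)$ is not $\nabla$-horizontal, its second fundamental form is a nonzero $\mathcal{O}_{X_1}$-linear map $F^*L_1\to F^*L_1^{-1}\otimes\omega_{X_1/k}$; but
\[
\deg F^*L_1=\tfrac{p}{2}(2g-2+r)>\tfrac{2-p}{2}(2g-2+r)=\deg\bigl(F^*L_1^{-1}\otimes\omega_{X_1/k}\bigr)
\]
since $2g-2+r>0$ and $p\geq 3$, so no nonzero such morphism of line bundles exists, again a contradiction. Hence $0\notin\mathrm{Im}(\rho)$.

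The main obstacle is the first paragraph: one must pin down exactly how the gluing data (equivalently, the extension class) of $C^{-1}_{X_1\subset X_2}(E_1,\theta_1)$ depends on the Frobenius lifts, and reconcile the two torsor structures --- that of $W_2$-liftings of $X_1$ and that of systems of local Frobenius lifts --- so that translation by $\nu$ on one side becomes translation by $F^*\check\theta_1(\nu)$ on the other. Once that local computation is in place, parts (1) and (2) are short, the latter using only the degree bookkeeping above and the nowhere-vanishing $p$-curvature of the inverse Cartier transform of a maximal Higgs field.
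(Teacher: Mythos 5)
Your overall route is the same as the paper's: both arguments factor $\rho$ through the Frobenius--obstruction cocycle $\{(\tilde F_i-\tilde F_j)/p\}$ and contract it with $F^*\theta_1$, and your part (2) is complete and correct --- it is the paper's argument with the proof of Lemma \ref{injective} inlined (incidentally, your value $\tfrac{2-p}{2}(2g-2+r)$ for $\deg(F^*L_1^{-1}\otimes\omega_{X_1/k})$ is the correct one; the paper's $\tfrac{1-p}{2}$ is a harmless slip). The problem is in part (1), at precisely the step you yourself flag as ``the main obstacle''. You assert that changing the $W_2$-lifting by $\nu$ translates the cocycle $\{(\tilde F_i-\tilde F_j)/p\}$ by a cocycle $\{\nu_{ij}\}$ representing $\nu$. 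As written this does not even typecheck: $(\tilde F_i-\tilde F_j)/p$ is a derivation over Frobenius, i.e.\ a section of $F^*\mathcal{T}_{X_1/k}$, whereas $\nu_{ij}$ is a section of $\mathcal{T}_{X_1/k}$, so neither the sum $(\tilde F_i-\tilde F_j)/p+\nu_{ij}$ nor the contraction $(F^*\theta_1)\lrcorner\,\nu_{ij}$ is defined. The correct statement is that the cocycle is translated by $F^*\nu_{ij}$, and this Frobenius twist is exactly where the $F^*$ in \eqref{eq203} comes from; your final expression $F^*(\nu_{ij}\lrcorner\theta_1)$ silently reinstates it without justification.

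More importantly, this compatibility of the two torsor structures is the entire content of part (1), and you leave it unproven. The paper establishes it by a direct \v{C}ech computation: writing $g_{ij}, g'_{ij}$ for the gluing embeddings corresponding to the two liftings and $h_{ij}=\bigl((g_{ij}\circ F_j)^*-(F_i\circ g_{ij})^*\bigr)/p$ for the obstruction cocycles, one finds
$$
h_{ij}-h'_{ij}=\frac{F_j^*\circ\bigl(g_{ij}^*-(g'_{ij})^*\bigr)-\bigl(g_{ij}^*-(g'_{ij})^*\bigr)\circ F_i^*}{p},
$$
and the second term vanishes because $\bigl(g_{ij}^*-(g'_{ij})^*\bigr)/p$ is a derivation while $dF_i^*$ has image in $p\Omega_{V_i}$; what survives is $F^*$ applied to the derivation cocycle representing $\nu$, which is then carried into $H^1(X_1,F^*L_1^{-2})$ by $F^*(\check\theta_1)$. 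Without this computation (or an equivalent one) your part (1) is an assertion rather than a proof; with it, your argument coincides with the paper's.
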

\begin{proof}
(1) Given a $W_2$-lifting $X_2$ of $X_1$, the obstruction to lifting the absolute Frobenius over $X_2$ lies in $H^1(X_1,F^*\mathcal{T}_{X_1/k})$. Therefore, one obtains the following map
$$
\mu: \{ X_1\subset X_2\} \to H^1(X_1,F^*  \mathcal{T}_{X_1/k}).
$$

Using the construction of the inverse Cartier transform of Ogus-Vologodsky via exponential twisting (see \cite{LSZ0} and see also Appendix in the log setting), one sees immediately that $\rho$ is nothing but the composite $F^*(\check{\theta}_1)\circ \mu$. Via the commutativity of the diagram (\ref{eq201}), it suffices to show that $\mu$ is a torsor map under the map
$$
F^*: H^1(X_1, \mathcal{T}_{X_1/k})\to H^1(X_1,F^*\mathcal{T}_{X_1/k}).
$$
This statement should be known in the literature, although we cannot find a suitable reference. In the following, we shall provide a proof for the reader's convenience. This proof might also help the reader to understand the  computations in the proof of Lemma \ref{cohclass}.

Let $X_2$ and $\hat X_2$ be two $W_2$-liftings of the log curve $X_1$. Let
$\sU=\{U_i\}_{i\in I}$ be an open affine covering of $X_1$. Following a suggestion of the referee, we assume additionally that the boundary divisor $D_1$ does not intersect with any overlap $U_{ij}:=U_i\cap U_j$. By doing so, we can avoid the log differential calculus to define the cocyle $\{h_{ij}\}$ below. Note that there is a unique $W_2$-lifting $V_i$ of $U_i$ up to isomorphism (which we shall suppress in our computations). Let $V_{ij}$ be a
$W_2$-lifting of $U_{ij}$. Fix the embeddings $V_{ij}\to V_j$.
Corresponding to liftings $X_2$ and $\hat X_2$, there are two embeddings $g_{ij}: V_{ij}\to V_i$, and $\hat g_{ij}:V_{ij}\to V_i$. Note that $g_{ij}$ and $\hat g_{ij}$ have the same reduction $U_{ij}\to U_i$ modulo $p$. Then the image of  $g_{ij}^*-(\hat g_{ij})^*: \sO_{V_{i}}\to \sO_{V_{ij}}$ is contained in $p\sO_{V_{ij}}$. Therefore, one obtains the morphism $\frac{g_{ij}^*-(\hat g_{ij})^*}{p}:\sO_{U_{i}}\to \sO_{U_{ij}}$ which is known to be a $k$-derivation. Let $\tilde \nu_{ij}: \Omega_{U_i/k}\to \sO_{U_{ij}}$ be the $\sO_{U_i}$-morphism such that $\tilde \nu_{ij}\circ d=\frac{g_{ij}^*-(\hat g_{ij})^*}{p}$. Localizing $\tilde \nu_{ij}$ over $U_{ij}$, one obtains an $\sO_{U_{ij}}$-morphism $ \omega_{U_{ij}/k}=\Omega_{U_{ij}/k}\to \sO_{U_{ij}}$ which can be regarded as a local section $\nu_{ij}$ of the sheaf $\sT_{X_1/k}$ over $U_{ij}$. It is clear that $\{\nu_{ij}\}$ is a 1-cocycle which represents the cohomology class $\nu=[X_2]-[\hat X_2]\in H^1(X_1,\mathcal{T}_{X_1/k})$. The well-definedness of the cohomology class is implicitly contained in \S8 \cite{EV}.

On the other hand, let us choose and then fix log Frobenius liftings $\{F_i:V_i\to V_i\}_{i\in I}$. For simplicity, let us assume $F_i$ restricts a morphism $V_{ij}\to V_{ij}$. Thus we get the composite morphisms
$$
g_{ij}\circ F_j: V_{ij}=V_{ji}\stackrel{F_j}{\to} V_{ji}=V_{ij}\stackrel{g_{ij}}{\to}V_i
$$
and
$$
F_i\circ g_{ij}: V_{ij}\stackrel{g_{ij}}{\to} V_{i}\stackrel{F_i}{\to}V_{i}.
$$
Both are log Frobenius liftings over $V_{ij}$. Set
$$
h_{ij}:= \frac{(F_i\circ g_{ij})^*-(g_{ij}\circ F_j)^*}{p}:
F^*\sO_{U_{i}}\to \sO_{U_{ij}}.
$$
The 1-cocyle condition for $\{h_{ij}\}$ as well as its well-definedness up to coboundary are implicitly contained in Corollary 9.12 \cite{EV}. Thus $\{h_{ij}\}$ gives rise to a \v{C}ech-representative of the class $\mu(X_1\subset X_2)$. Replacing $g_{ij}$ by $\hat g_{ij}$, we obtain similarly $\{\hat h_{ij}:=\frac{(F_i\circ \hat g_{ij} )^*-(\hat g_{ij}\circ F_j)^*}{p}\}$ for $\mu(X_1\subset \hat X_2)$. Thus it suffices to show the following equality:
$$
h_{ij}-\hat h_{ij}=-\frac{F^*\circ( g_{ij})^*-F^*\circ \hat g_{ij}^*}{p}.
$$
The left hand side is equal to
$$
\frac{(g_{ij}^*-(\hat g_{ij})^*)\circ F_i^*}{p}-\frac{ F^*\circ
(g_{ij}^*-(\hat g_{ij})^*)}{p}.
$$
It suffices to show the first term is zero. But this is clear: since $\frac{g_{ij}^*-(\hat g_{ij})^*}{p}$ is a $k$-derivation, for any element $\alpha\in
\sO_{U_i}$ over $U_{ij}$,
$$
\frac{(g_{ij}^*-(\hat g_{ij})^*)\circ F_i^*}{p}(\alpha\otimes 1)=\tilde\nu_{ij}\circ d(F^*\alpha)=0.
$$
Therefore, the required equality follows.

(2) Assume the contrary, namely, for some $W_2$-lifting, the extension (\ref{eq02}) given by the bundle part of the inverse Cartier transform of $(E_1,\theta_1)$ splits. By Lemma \ref{injective}, the extension as flat bundles (\ref{eq01}) splits as well. It follows that its $p$-curvature is zero, which is however impossible.
\end{proof}

\subsection{Proof of Proposition \ref{cone}}\label{consec}
To exhibit the algebraic structure of the periodic cone $K$, we proceed as follows. Note that the notions of locally free sheaves of finite rank and vector bundles will be used interchangeably in our description.\\

{\itshape Step 1}: Denote by $p_1: \text{Pic}^0(X_1)\times X_1 \to \text{Pic}^0(X_1)$ and  $p_2: \text{Pic}^0(X_1)\times X_1 \to   X_1$ the projections. Write $\sO=\sO_{\text{Pic}^0(X_1)\times X_1}$. Let $\mathcal{L}$ be the universal line bundle over $\text{Pic}^0(X_1)\times X_1$. Consider two invertible sheaves $p_2^*F^*L_1^{-1}$ and $p_2^{*}L_1^{-1}\otimes \sL^{-1}$ which form the sheaf
$\mathcal{H}om_{\sO}(p_2^*F^*L_1^{-1},p_2^{*}L_1^{-1}\otimes \sL^{-1})$.
Its direct image $p_{1*}\mathcal{H}om_{\sO}(p_2^*F^*L_1^{-1},p_2^{*}L_1^{-1}\otimes \sL^{-1})$ is a locally free $\sO_{\text{Pic}^0(X_1)}$-modules of rank $\frac{p-1}{2}(2g-2+r)-(g-1)$ by Riemann-Roch. Let $\V$ be the vector bundle $p_{1*}\mathcal{H}om(p_2^*F^*L_1^{-1},p_2^{*}L_1^{-1}\otimes \sL^{-1})$ and $\mathbb P$ its associated projective bundle. Let $\pi_0: \V\to \text{Pic}^0(X_1)$ be the natural projection and $\V^0=\V-\{0\}$, where $\{0\}$ is the zero section of $\pi_0$. Let $\bar{\pi}_0: \P\to \text{Pic}^0(X_1)$ be the natural projection. Set $\pi=\pi_0\times id$. The composite  
$$
g_2:\V\times X_1\stackrel{\pi}{\to}\text{Pic}^0(X_1)\times X_1  \stackrel{p_2}{\to} X_1
$$
is simply the natural projection to the second factor. Set $g_1: \V\times X_1\to \V$ to be the first projection. \\

{\itshape Step 2}: By construction, there is a tautological morphism
$$
t: \pi^*p_2^*F^*L_1^{-1}\to \pi^*p_2^*L_1^{-1}\otimes \pi^*\sL^{-1}.
$$
Consider further
$$
id\otimes t: \pi^*p_2^*F^*L_1^{-1}\otimes  \pi^*p_2^*F^*L_1^{-1}\to\pi^*p_2^*F^*L_1^{-1}\otimes \pi^*p_2^*L_1^{-1}\otimes \pi^*\sL^{-1},
$$
that induces the following morphism on the higher direct images:
$$
\phi: R^1g_{1,*}\pi^{*}p_2^*F^*L_1^{-2}\to R^1g_{1,*}\pi^{*}(p_2^*F^*L_1^{-1}\otimes p_2^*L_1^{-1}\otimes \sL^{-1}).
$$
Let $G$ denote $R^1g_{1,*}\pi^{*}p_2^*F^*L_1^{-2}$ and $H$ denote $R^1g_{1,*}\pi^{*}(p_2^*F^*L_1^{-1}\otimes p_2^*L_1^{-1}\otimes \sL^{-1})$. Then, the inverse image $\mathrm{Ker}(\phi)$ under $\phi$ of the zero section of the bundle $H$ is a closed subset of $G$.\\

{\itshape Step 3}: Note that $G=R^1g_{1,*}g_{2}^{*}(F^*L_1^{-2})\cong H^1(X_1,F^*L_1^{-2})\times \V$ as a vector bundle over $\V$. Set $\mathrm{Ker}(\phi)^0=\mathrm{Ker}(\phi)\cap G^0$, where
$G^0=H^1(X_1,F^*L_1^{-2})\times \V^0$ is an open subset of $G$. Notice that the closed subset $\mathrm{Ker}(\phi)^0\subset H^1(X_1,F^*L_1^{-2})\times \V^0$ is invariant under the natural $\G_m$-action on the second factor and hence descends to a closed subset of $H^1(X_1,F^*L_1^{-2})\times \P$. Indeed, let us consider the fiber $\phi_s: G_s\to H_s$ of the bundle morphism $\phi$ at a closed point $s\in \V^0$. Note for any $\lambda\in \G_m$, $\lambda s$ and $s$ in $\V^0$ have the same image $[\mathscr{L}]\in \text{Pic}^0(X_1)$ under $\pi_0$. Thus the fiber $H_{\lambda s}$ is canonically identified with $H_{s}$ which are both identified with $H^{1}(X_1, F^*L_1^{-1}\otimes L_1^{-1}\otimes \mathscr{L})$. Then, by construction, it holds that
$$
\phi_{\lambda s}(v)=\lambda \phi_s(v), \ \forall{v}\in H^1(X_1,F^*L_1^{-2}).
$$
Thus $\mathrm{Ker}(\phi)^0(v,\lambda s)=\mathrm{Ker}(\phi)^0(v,s)$ for all $v\in  H^1(X_1,F^*L_1^{-2})$ and all $\lambda\in \G_m$. By abuse of notation, we denote the closed subset in $H^1(X_1,F^*L_1^{-2})\times \P$ again by $\mathrm{Ker}(\phi)$ in the following, which shall not cause confusion. Now we consider the projections
\[\pi_1: \mathrm{Ker}(\phi) \to H^1(X_1, F^*L_1^{-2})  ; \quad \pi_2: \mathrm{Ker}(\phi) \to  \mathbb{P}. \]
Then since $\pi_1$ is proper, the image of $\mathrm{Ker}(\phi)$ under $\pi_1$ is a closed subset of $H^1(X_1, F^*L_1^{-2})$. By our construction, $\pi_1(\mathrm{Ker}(\phi))$ is nothing but the periodic cone $K$. \\

Next, we determine the dimension of $K$. It is easy to calculate the dimension of $\mathrm{Ker}(\phi)$ using the second projection $\pi_2$. Take a closed point $x=([s],[\mathscr{L}])\in \mathbb{P}$ with $[\mathscr{L}]=\bar \pi_0(x)\in \text{Pic}^0(X_1)$ and $s\in \text{Hom}(L_1\otimes \mathscr{L},F^*L_1)$. The fiber $\pi_2^{-1}(x)\subset \mathrm{Ker}(\phi)$ is naturally identified with $\mathrm{Ker}(\phi_s)\subset H^1(X_1, F^*L_1^{-2})$ defined before, where $\phi_s: H^1(X_1,F^*L_1^{-2}) \to H^1(X_1, L_1^{-1}\otimes \mathscr{L}^{-1}\otimes F^*L_1^{-1}) $ is the morphism induced by $s$. Let $\text{Div}(s)$ be the zero divisor of the morphism $L_1\otimes \mathscr{L}\overset{s}\hookrightarrow F^*L_1$. Then one computes that
\begin{align}
\dim \mathrm{Ker}(\phi_s) = \deg(\text{Div}(s))=\frac{p-1}{2}(2g-2+r).
\end{align}
Therefore,
\begin{align}
\dim \mathrm{Ker}(\phi)&=\dim \mathbb{P}  +\dim \mathrm{Ker}(\phi_s)\\
&=(p-1)(2g-2+r).  \nonumber
\end{align}
Note also that $\mathrm{Ker}(\phi)$ is actually a vector bundle over $\mathbb P$ and therefore irreducible. The following lemma asserts that $\pi_1: \mathrm{Ker}(\phi)\to K$ is injective over a nonempty open subset and therefore $K$ is irreducible and of the same dimension as $\mathrm{Ker}(\phi)$.
\begin{lemma}\label{nointer}
For every closed point $x=([s], [\mathscr{L}])\in \mathbb{P}(p_{1*} (L_1^{-1}\otimes \mathcal{L}^{-1}\otimes F^*L_1)) $, the intersection
$$
\mathrm{Ker}(\phi_s) \cap (\bigcup_{ ([s'],[\mathscr{L}'])\neq ([s], [\mathscr{L}])\in \mathbb{P}(p_{1*} (L_1^{-1}\otimes \mathcal{L}^{-1}\otimes F^*L_1))}\mathrm{Ker}(\phi_{s'}))
$$
in the ambient space $H^1(X_1,F^*L_1^{-2})$ is contained in a proper closed subset of $\mathrm{Ker}(\phi_s)$. Therefore, the map $\pi_1$ is injective over any closed point of $\mathrm{Ker}(\phi_s)$ away from the above intersection.
\end{lemma}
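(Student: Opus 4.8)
The plan is to reinterpret the statement in terms of sub-line-bundles of the extension bundle and then carry out a dimension count. Throughout, write $d:=\tfrac12(2g-2+r)=\deg L_1>0$, and for $\xi\in H^1(X_1,F^*L_1^{-2})$ let $0\to F^*L_1^{-1}\to H_\xi\to F^*L_1\to 0$ be the corresponding extension. First I will record the dictionary, exactly as in the proof of the Lemma preceding \eqref{eq05}: for $([s'],[\mathscr L'])\in\mathbb{P}$, one has $\xi\in\mathrm{Ker}(\phi_{s'})$ if and only if $H_\xi$ contains a rank-one subsheaf $M'\cong L_1\otimes\mathscr L'$ whose composite with $H_\xi\twoheadrightarrow F^*L_1$ is $s'$. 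Since $\deg(L_1\otimes\mathscr L')=d>0>-pd=\deg F^*L_1^{-1}$, any rank-one subsheaf of $H_\xi$ of degree $d$ maps non-trivially to $F^*L_1$, and the lift of such a map is unique because $\mathrm{Hom}(L_1\otimes\mathscr L',F^*L_1^{-1})=0$. Thus the fibre $\pi_1^{-1}(\xi)$ is canonically identified with the set of rank-one subsheaves of $H_\xi$ of degree exactly $d$.

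Next I will reduce to a statement about slopes. Because $L_1\otimes\mathscr L\hookrightarrow H_\xi$, one always has $\mu_{\max}(H_\xi)\geq d$, where $\mu_{\max}$ denotes the maximal degree of a sub-line-bundle. I claim that if $\mu_{\max}(H_\xi)=d$, then $H_\xi$ has a unique sub-line-bundle of degree $d$: any degree-$d$ subsheaf is then saturated, and two distinct saturated sub-line-bundles of degree $d$ in the degree-$0$ bundle $H_\xi$ would produce a non-zero map of line bundles from degree $d$ to degree $-d$, impossible as $d>0$. By the dictionary, $\pi_1^{-1}(\xi)$ is then a single point, so such $\xi$ does not lie in the intersection appearing in the Lemma. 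Hence that intersection is contained in $Z:=\{\xi\in\mathrm{Ker}(\phi_s):\mu_{\max}(H_\xi)\geq d+1\}$, and since $\mathrm{Ker}(\phi_s)$ is an affine space, it will suffice to prove $\dim Z<\dim\mathrm{Ker}(\phi_s)=(p-1)d$.

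The heart of the argument will be the estimate that, for every non-zero $t\colon N\to F^*L_1$ with $N$ a line bundle,
\[
\dim\bigl(\mathrm{Ker}(\phi_s)\cap\mathrm{Ker}(\phi_t)\bigr)\ \leq\ \deg\gcd\bigl(\mathrm{Div}(s),\mathrm{Div}(t)\bigr).
\]
To prove this I will observe that the image of $(s,t)\colon(L_1\otimes\mathscr L)\oplus N\to F^*L_1$ equals $F^*L_1(-E)$ with $E=\gcd(\mathrm{Div}(s),\mathrm{Div}(t))$, so that the cokernel of $(s,t)$ is $\sO_E$; dualizing the resulting exact sequences, tensoring with $F^*L_1^{-1}$ and passing to the long exact cohomology sequence then identifies $\mathrm{Ker}(\phi_s)\cap\mathrm{Ker}(\phi_t)$ with a quotient of $H^0$ of a rank-one sheaf whose torsion part has length $\deg E$ and whose locally free part has strictly negative degree, which yields the bound. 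Granting this, I will fix $e$ with $d+1\leq e\leq pd$ and consider $\mathcal I_e:=\{(\xi,D):D\in\operatorname{Sym}^{pd-e}(X_1),\ \xi\in\mathrm{Ker}(\phi_s)\cap\mathrm{Ker}(\phi_{t_D})\}$, where $t_D\colon F^*L_1(-D)\hookrightarrow F^*L_1$ is the tautological inclusion. The base has dimension $pd-e$; the fibre over $D$ has dimension $\leq\deg\gcd(\mathrm{Div}(s),D)$ by the estimate; and the locus of $D$ with $\deg\gcd(\mathrm{Div}(s),D)\geq\delta$ has dimension $\leq pd-e-\delta$, since it is covered by the finitely many translates $E+\operatorname{Sym}^{pd-e-\deg E}(X_1)$ with $E\leq\mathrm{Div}(s)$. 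Hence $\dim\mathcal I_e\leq pd-e$. Any $\xi$ with $\mu_{\max}(H_\xi)=e\geq d+1$ lies in $\operatorname{pr}_1(\mathcal I_e)$ (take $D$ to be the divisor of the composite of the maximal destabilizing sub-bundle of $H_\xi$ into $F^*L_1$), so $Z\subseteq\bigcup_{e=d+1}^{pd}\operatorname{pr}_1(\mathcal I_e)$, a finite union of closed subsets each of dimension $\leq pd-(d+1)=(p-1)d-1<(p-1)d$. This proves the Lemma; the final assertion is then immediate, because over any point outside that closed subset one has $\mu_{\max}(H_\xi)=d$, hence $\pi_1^{-1}$ is a single point.

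I expect the main obstacle to be this intersection estimate: running the homological computation with the cokernel of $(s,t)$ cleanly, in particular handling non-reduced $\mathrm{Div}(s)$ and $\mathrm{Div}(t)$, and checking that $\mathrm{Ker}(\phi_s)\cap\mathrm{Ker}(\phi_t)$ is genuinely a quotient of the relevant $H^0$ (rather than merely contained in it). The remaining ingredients — the dictionary, the slope reduction, and the count over $\operatorname{Sym}^{pd-e}(X_1)$ — should be routine once that estimate is in hand.
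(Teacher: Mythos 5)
Your proof is correct, but it is organized quite differently from the paper's. The shared core is the homological estimate: for two maps $s,t$ into $F^*L_1$, the image of $(s,t)$ saturates to $F^*L_1(-E)$ with $E=\mathrm{Div}(s)\cap\mathrm{Div}(t)$, and the long exact sequence identifies $\mathrm{Ker}(\phi_s)\cap\mathrm{Ker}(\phi_t)$ with the image of $H^0(X_1,\mathcal{O}_E)$ in $H^1(X_1,F^*L_1^{-2})$, of dimension $\deg E$ --- exactly the computation in the paper, which you reprove in slightly greater generality (arbitrary $\deg N$; the needed vanishing of $H^0$ of the relevant negative-degree line bundle does hold in your application since $\deg N=e>0$). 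Where you diverge is in the globalization. The paper observes that $D_{s,s'}$ is always a \emph{proper} subdivisor of the fixed divisor $\mathrm{Div}(s)$, so the whole union is trapped inside $\bigcup_{D'\subsetneq\mathrm{Div}(s)}\mathrm{Im}\bigl(H^0(X_1,\mathcal{O}_{D'})\hookrightarrow H^1(X_1,F^*L_1^{-2})\bigr)$, a \emph{finite} union of linear subspaces of dimension $<\deg\mathrm{Div}(s)=\dim\mathrm{Ker}(\phi_s)$; closedness comes for free. You instead reduce to the locus where $\mu_{\max}(H_\xi)\geq d+1$ and bound it by an incidence variety over $\operatorname{Sym}^{pd-e}(X_1)$, invoking properness of the symmetric product for closedness of the image. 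Your route is longer and leaves a couple of routine points implicit (closedness of $\mathcal{I}_e$, which needs local freeness of the relevant $R^1$, and the stratified dimension count over the locus $\deg(\mathrm{Div}(s)\cap D)\geq\delta$), but it buys a sharper structural statement --- off a proper closed subset the degree-$d$ subsheaf of $H_\xi$ is literally unique, which makes the injectivity assertion transparent --- and it would survive in settings where the pairwise intersections are not all indexed by subdivisors of a single fixed divisor. Either way the Lemma is established.
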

\begin{proof}
Assume that $\mathrm{Ker}(\phi_s) \cap  \mathrm{Ker}(\phi_{s'})\neq 0$ for some $([s'],[\mathscr{L}']) \in \mathbb{P}$. Note that $\mathrm{Ker}(\phi_s) \cap  \mathrm{Ker}(\phi_{s'})$
is the kernel of the map
\[  \text{Ext}^1(F^*L_1, F^*L_1^{-1}) \overset{(\phi_s, \phi_{s'})}\longrightarrow \text{Ext}^1(L_1\otimes \mathscr{L}, F^*L_1^{-1})\oplus \text{Ext}^1(L_1\otimes \mathscr{L}', F^*L_1^{-1}),\]
which is induced by the sheaf morphism
\begin{align} \label{sub}
 F^*L_1^{-2} \overset{(s, s')}\longrightarrow  (L_1^{-1}\otimes \mathscr{L}^{-1}\otimes F^*L_1^{-1}) \oplus  (L_1^{-1}\otimes \mathscr{L}'^{-1}\otimes F^*L_1^{-1}).
\end{align}
A moment of thought gives us the saturation of the image of (\ref{sub}), which is $ F^*L_1^{-2}\otimes\mathcal{O}(D_{s, s'})$ with $D_{s, s'}=\text{Div}(s)\cap \text{Div}(s')$. Therefore the morphism (\ref{sub}) factors as
\[ F^*L_1^{-2}\hookrightarrow  F^*L_1^{-2}\otimes\mathcal{O}(D_{s, s'})   \hookrightarrow  (L_1^{-1}\otimes \mathscr{L}^{-1}\otimes F^*L_1^{-1}) \oplus  (L_1^{-1}\otimes \mathscr{L}'^{-1}\otimes F^*L_1^{-1}). \]
The long exact sequence of cohomologies of the following short exact sequence of vector bundles
\begin{align*}
 0\to & F^*L_1^{-2}\otimes\mathcal{O}(D_{s, s'}) \to  (L_1^{-1}\otimes \mathscr{L}^{-1}\otimes F^*L_1^{-1}) \oplus  (L_1^{-1}\otimes \mathscr{L}'^{-1}\otimes F^*L_1^{-1}) \\
\to & L_1^{-2}\otimes \mathscr{L}^{-1} \otimes \mathscr{L}'^{-1} \otimes \mathcal{O}(-D_{s, s'}) \to 0
\end{align*}
gives the injective map
$$H^1(X_1, F^*L_1^{-2}\otimes\mathcal{O}(D_{s, s'}) ) \to H^1(X_1,  L_1^{-1}\otimes \mathscr{L}^{-1}\otimes F^*L_1^{-1}) \oplus H^1(X_1, L_1^{-1}\otimes \mathscr{L}'^{-1}\otimes F^*L_1^{-1} ),
$$
as $H^0(X_1,  L_1^{-2}\otimes \mathscr{L}^{-1} \otimes \mathscr{L}'^{-1} \otimes \mathcal{O}(-D_{s, s'}))=0$ because $L_1^{-2}\otimes \mathscr{L}^{-1} \otimes \mathscr{L}'^{-1} \otimes \mathcal{O}(-D_{s, s'})$ is of negative degree. Therefore, $\mathrm{Ker}(\phi_s) \cap  \mathrm{Ker}(\phi_{s'})$ is the kernel of the map
\[ H^1(X_1, F^*L_1^{-2}  )  \to H^1(X_1, F^*L_1^{-2}\otimes\mathcal{O}(D_{s, s'}) ), \]
 which turns out to be the image of the injection
\[ H^0(X_1, \mathcal{O}_{D_{s, s'}} ) \hookrightarrow  H^1(X_1, F^*L_1^{-2}  ).\]
Here $H^0(X_1, F^*L_1^{-2}\otimes\mathcal{O}(D_{s, s'}) )=0$ again because of the degree. Clearly, $$\dim (\mathrm{Ker}(\phi_s) \cap  \mathrm{Ker}(\phi_{s'}))=\text{deg}(D_{s, s'}) <\text{deg}(\text{Div}(s))=\dim \mathrm{Ker}(\phi_s).$$
The above argument actually proves that
\begin{align*}
&\mathrm{Ker}(\phi_s) \cap (\bigcup_{ ([s'],[\mathscr{L}'])\neq ([s], [\mathscr{L}])\in \mathbb{P}(p_{1*} (L_1^{-1}\otimes \mathcal{L}^{-1}\otimes F^*L_1))}\mathrm{Ker}(\phi_{s'})) \\
\subset  & \bigcup_{D'\subsetneq\text{Div}(s)}\text{Im}(H^0(X_1, \mathcal{O}_{D'} ) \hookrightarrow  H^1(X_1, F^*L_1^{-2}  ) ),
\end{align*}
while the latter is a finite union of closed subsets of dimension strictly less than $\dim \mathrm{Ker}(\phi_s)$. This completes the proof.
\end{proof}

In order to employ a degeneration argument in the next section, we need to consider the twisted Higgs bundle $(\omega_{X_1/k}\oplus \sO_{X_1},\theta_1)=(E_1,\theta_1)\otimes (L_1,0)$. The final remark shows that the intersection behaviour of the set $A\cap K$ is invariant under this twist.
\begin{remark}\label{twist}
In the study of $A$ and $K$, we can replace $(E_1,\theta_1)$ of (\ref{Higgsbdl}) with one of its twists $(E_1,\theta_1)\otimes (M,0)$ by a line bundle $M$ equipped with the zero Higgs field. Indeed, first one gets the same ambient space
$$
\textrm{Ext}^1(F^*L_1\otimes F^*M,F^*L_1^{-1}\otimes F^*M)=\textrm{Ext}^1(F^*L_1,F^*L_1^{-1}),
$$
and then the same affine subspace $A$ because of the canonical isomorphism $$
C^{-1}_{X_1\subset X_2}((E_1,\theta_1)\otimes (M,0))\cong C^{-1}_{X_1\subset X_2}(E_1,\theta_1)\otimes (F^*M,\nabla_{can}).
$$
We also get the same periodic cone $K$ when one considers those extensions of $F^*L_1^{-1}\otimes F^*M$ by $F^*L_1\otimes F^*M$ containing the invertible subsheaf $L_1\otimes F^*M$.
\end{remark}

\section{The degeneration argument}
For a given curve $X_1\in \mathscr{M}_{g,r}$, where $2g-2+r$ is kept as before to be a positive and even number, it seems hard in general to determine whether or not $A$ intersects with $K$. Dimension counting (\ref{eq07}) makes the problem more interesting. The principal aim of this section is to show $A\cap K\neq \emptyset$ for a \emph{generic} curve via a degeneration argument. Therefore, we shall consider the coarse moduli space of stable curves $\overline{\mathscr{M}}_{g,r}$ and investigate the intersection behavior of $A$ and $K$ in a neighborhood of a so-called \emph{totally degenerate $(g,r)$-curve} (abbreviated as t.d. $(g,r)$-curve). The advantage of considering a t.d curve is that one can exhibit explicitly the nonemptiness of $A\cap K$. In fact, Proposition \ref{AKinter} asserts that it consists of exactly one point (multiplicity not counted). As another output of this method, we also show the existence of ordinary sections for a generic curve, which is essential in our method to guarantee the liftability of the periodic Higgs bundle to an arbitrary truncated Witt ring in \S\ref{lifting section}.

\subsection{The relative $A$ and $K$ over a local base}\label{section on relative A and K}
Recall that a t.d. $(g,r)$-curve is a nodal curve with $r$-marked points whose arithmetic genus is $g$ and normalization consists of disjoint copies of $\P^1$ with three marked points, say $\{0,1,\infty\}$ (the inverse image of nodes are also considered as marked points on the each component of the normalization). In the moduli $\overline{\mathscr{M}}_{g,r}$ of stable $(g,r)$-curves, there always exists such a t.d. $(g,r)$-curve.

Let $\chi$ be a t.d. $(g,r)$-curve. Take a flat family $f: \mathscr{X}_1\to B_1$ of stable $(g,r)$-curves parameterized by an affine curve $B_1$ such that its fiber over the closed point $0\in B_1$ is isomorphic to $\chi$ and such that its image in $\overline{\mathscr{M}}_{g,r}$ intersects transversally with the boundary divisor consisting of degenerate curves at the point $[\chi]$. Let $\tilde N$ be the log structure on $B_1$ associated to the divisor $0$ and $\tilde M$ the log structure on $\mathscr{X}_1$ associated to the divisor which is the sum of the vertical divisor $f^{-1}(0)$ and the horizontal divisor consisting of $r$-marked points. With these log structures, we get a log smooth morphism
$\tilde f: (\mathscr{X}_1,\tilde M)\to (B_1,\tilde N)$. Let $\Spec \ k\to B_1$ be the closed point $0$. Then we equip the structural morphism $f_0: \chi\to \Spec \ k$ with the pull-back log structure $\tilde f_0: (\chi, M)\to (\Spec \ k, N)$ of $\tilde f$ along the above map. Note that $ (\Spec \ k, N)$ is the so-called logarithmic point, and \'{e}tale locally around a node of $\chi$ the log structure $M$ admits a chart of the following form
$$
\xymatrix{
  \N^2 \ar[rr]^{\alpha_2} & & k[x,y]/(xy) \\
  \N \ar[u]^{{\rm diag}}\ar[rr]^{\alpha_1}& & \ar[u]_{}k,   }
$$
where $\alpha_1$ sends $1$ to $0$, $\alpha_2$ sends $(1,0)$ to $x$ and $(0,1)$ to $y$, and ${\rm diag}$ is the diagonal morphism. Whenever the context is clear, we may also use $f$ to denote $\tilde f$ by abuse of notation.

One of the main advantages of log smooth deformations is that the sheaf of relative log differential forms $\omega_{\mathscr{X}_1/B_1}$ (w.r.t. the given log structures on $\mathscr{X}_1$ and $B_1$) is \emph{locally free} by Proposition 3.10 \cite{KKa}. Let $\sT_{\mathscr{X}_1/B_1}$ be its $\sO_{\mathscr{X}_1}$-dual. We shall consider the following relative Higgs bundle
\begin{align}
(\sE, \theta)=(\omega_{ \mathscr{X}_1/B_1}\oplus\sO_{ \mathscr{X}_1}, \theta),
\end{align}
where $\theta: \omega_{ \mathscr{X}_1/B_1} \to \sO_{\mathscr{X}_1}\otimes\omega_{ \mathscr{X}_1/B_1}$ is the natural isomorphism. For a closed point $b\neq 0$, it restricts to a Higgs bundle (\ref{Higgsbdl}) over $\mathscr{X}_{1,b}:=f^{-1}(b)$ twisted by $L_1$, i.e,
$$
(L_1^{2} \oplus \sO_{X_1},\theta_1), \quad \theta_1: L_1^2\stackrel{\cong}{\longrightarrow} \sO_{X_1}\otimes \omega_{X_1/k}.
$$

\begin{lemma}
The coherent sheaf of $\sO_{B_1}$-modules $R^1f_*F_{\mathscr{X}_1}^*
\sT_{\mathscr{X}_1/B_1}$ is locally free and of rank $(2p+1)(g-1)+pr$.
\end{lemma}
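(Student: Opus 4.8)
The plan is to reduce the statement to a computation with the relative log dualizing sheaf and then apply the theorem on cohomology and base change. The first step is to rewrite the sheaf $F_{\mathscr{X}_1}^*\sT_{\mathscr{X}_1/B_1}$. Since $F_{\mathscr{X}_1}$ is the absolute Frobenius of $\mathscr{X}_1$ — a homeomorphism acting by $p$-th powers on functions — for any invertible sheaf $\sM$ on $\mathscr{X}_1$ one has $F_{\mathscr{X}_1}^*\sM\cong \sM^{\otimes p}$ (compare transition cocycles). Applying this to $\sM=\sT_{\mathscr{X}_1/B_1}=\omega_{\mathscr{X}_1/B_1}^{-1}$, which is invertible by Proposition 3.10 of \cite{KKa}, we get an isomorphism of $\sO_{\mathscr{X}_1}$-modules $F_{\mathscr{X}_1}^*\sT_{\mathscr{X}_1/B_1}\cong \omega_{\mathscr{X}_1/B_1}^{\otimes(-p)}$. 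Thus it suffices to show that $R^1f_*\,\omega_{\mathscr{X}_1/B_1}^{\otimes(-p)}$ is locally free of rank $(2p+1)(g-1)+pr$.

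The key input is that $H^0$ of the restriction to every closed fiber vanishes. For a closed point $b\neq 0$ the fiber $\mathscr{X}_{1,b}$ is a smooth $(g,r)$-curve and $\omega_{\mathscr{X}_1/B_1}|_{\mathscr{X}_{1,b}}=\omega_{X_1/k}$ has degree $2g-2+r>0$, so $\omega_{\mathscr{X}_1/B_1}^{-p}|_{\mathscr{X}_{1,b}}$ has negative degree and hence no global sections. For $b=0$ the fiber is the t.d.\ curve $\chi$; I would use that $\omega_{\mathscr{X}_1/B_1}$ restricts on $\chi$ to its log dualizing sheaf, whose restriction to each component $C_i\cong\P^1$ with its three special points is $\omega_{\P^1}(3\ \text{pts})\cong\sO(1)$, so $\omega_{\mathscr{X}_1/B_1}^{-p}|_{C_i}\cong\sO(-p)$ has no sections. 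Since $\chi$ is reduced, $\sO_\chi\hookrightarrow\bigoplus_i(\nu_i)_*\sO_{C_i}$, whence $H^0(\chi,\omega_{\mathscr{X}_1/B_1}^{-p}|_\chi)\hookrightarrow\bigoplus_i H^0(C_i,\sO(-p))=0$. So $h^0$ of the restriction is $0$ on every fiber, and consequently $R^0f_*\,\omega_{\mathscr{X}_1/B_1}^{-p}=0$.

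Next, because $f$ is flat and proper of relative dimension one, the Euler characteristic $\chi\big(\mathscr{X}_{1,b},\,\omega_{\mathscr{X}_1/B_1}^{-p}|_{\mathscr{X}_{1,b}}\big)$ is independent of $b$, and $H^2$ of every fiber vanishes; together with the vanishing of $h^0$ this forces $h^1$ to equal $-\chi$ on every fiber, hence to be constant. Computing $\chi$ on a smooth fiber by Riemann--Roch gives $\chi=-p(2g-2+r)+1-g$, so $h^1=p(2g-2+r)+g-1=(2p+1)(g-1)+pr$. Finally, by the base change theorem (Grauert; or Hartshorne III.12.11, using $R^2f_*=0$ so the degree-$2$ comparison map is trivially an isomorphism and $R^0f_*=0$), the sheaf $R^1f_*\,\omega_{\mathscr{X}_1/B_1}^{-p}$ is locally free with fibre $H^1(\mathscr{X}_{1,b},\omega_{\mathscr{X}_1/B_1}^{-p}|_{\mathscr{X}_{1,b}})$, hence locally free of rank $(2p+1)(g-1)+pr$; by the first step this is also the rank of $R^1f_*F_{\mathscr{X}_1}^*\sT_{\mathscr{X}_1/B_1}$.

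The genuinely delicate point is the fibre computation at $b=0$: one must justify that $\omega_{\mathscr{X}_1/B_1}$ restricts on the degenerate fibre to the \emph{log} dualizing sheaf of $\chi$ and that this sheaf has degree $1$ on each $\P^1$-component — this is exactly where F.\ Kato's theory of log smooth deformations of nodal curves (and the arithmetic identity $3N=2\delta+r$ relating the numbers of components, nodes and marked points, so that the total degree is $N=2g-2+r$) enters. Everything else is a routine application of Riemann--Roch together with standard semicontinuity and base change.
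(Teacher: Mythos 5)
Your argument is correct and lands in the same place as the paper's (fiberwise dimension count followed by Grauert's theorem), but it treats the only delicate fiber differently. The paper's own proof is a two-line reduction: it identifies the fiber of $R^1f_*F_{\mathscr{X}_1}^*\sT_{\mathscr{X}_1/B_1}$ at $b$ with $H^1(\mathscr{X}_{1,b},F^*\sT_{\mathscr{X}_{1,b}/k})$, quotes the already-established value $(2p+1)(g-1)+pr$ for smooth fibers, and then defers the degenerate fiber to the subsequent proposition on the totally degenerate curve, where $h^1(\chi,F^*\sT_{\chi/k})$ is computed \emph{directly} from the normalization sequence $0\to F^*\sT_{\chi/k}\to\gamma_*\gamma^*F^*\sT_{\chi/k}\to\oplus_{i}k_{P_i}\to 0$, giving $\nu(p-1)+\delta$ with $\nu=2g-2+r$ components and $\delta=3g-3+r$ nodes. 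You instead prove only that $h^0$ vanishes on \emph{every} fiber (on $\chi$ via the injection $H^0(\chi,-)\hookrightarrow\bigoplus_i H^0(\P^1,\sO(-p))=0$, valid because $\chi$ is reduced) and let local constancy of the Euler characteristic in the flat proper family transport the Riemann--Roch count from a smooth fiber to the degenerate one. Your route is more self-contained for this particular lemma and avoids recomputing $h^1$ on $\chi$; the paper's route costs nothing extra because the explicit description of $H^1(\chi,F^*\sT_{\chi/k})$ in terms of gluing data at the nodes is needed anyway as the engine of the whole degeneration argument of that section. The one input you rightly flag as delicate --- that $\omega_{\mathscr{X}_1/B_1}$ restricts on each $\P^1$-component of the central fiber to $\sO_{\P^1}(1)$, so that its $(-p)$-th power pulls back to $\sO_{\P^1}(-p)$ --- is exactly the fact the paper also uses, in the form $\gamma^*(F^*\sT_{\chi/k})|_{\chi_i}\cong\sO_{\P^1}(-p)$, and it is justified by F.~Kato's log smooth deformation theory as you indicate. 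No gaps.
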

\begin{proof}
For each closed point $b\in B_1$, the fiber $(F_{\mathscr{X}_1}^*
\sT_{\mathscr{X}_1/B_1})_b$ is naturally isomorphic to $F^*\sT_{\mathscr{X}_{1,b}/k}$. We have shown that for $b\neq 0$, the dimension of $h^1(F^*\sT_{\mathscr{X}_{1,b}/k})$ is equal to $(2p+1)(g-1)+pr$. By the theorem of Grauert, our problem is reduced to showing that it is so for the fiber over $0$, which is Proposition \ref{dg_of_A} (1) below.
\end{proof}

Now we shall construct two closed subsets $\sA$ and $\sK$ inside the vector bundle $R^1f_*F_{\mathscr{X}_1}^*\sT_{\mathscr{X}_1/B_1}$ over $B_1$, such that over a closed point $0\neq b\in B_1$, they specialize to the $A$ and $K$ given before. \\

{\itshape Construction of $\sA$:} Let
$$\nabla_{can}: F_{\mathscr{X}_1}^*
\sT_{\mathscr{X}_1/B_1}\to F_{\mathscr{X}_1}^*
\sT_{\mathscr{X}_1/B_1}\otimes \omega_{\mathscr{X}_1/B_1}
$$
be the relative flat bundle. From it, one gets the relative de-Rham complex
$$
\Omega_{dR}^*:=\Omega_{dR}^*(F_{\mathscr{X}_1}^*
\sT_{\mathscr{X}_1/B_1},\nabla_{can}).
$$
There are natural morphisms of complexes $\Omega_{dR}^*\to F_{\mathscr{X}_1}^*
\sT_{\mathscr{X}_1/B_1}$ and $
\sT_{\mathscr{X}_1/B_1}\to \Omega_{dR}^*$ which induce the corresponding morphism on higher direct images:
$$
\Gamma: R^1f_{*,dR}(F_{\mathscr{X}_1}^*
\sT_{\mathscr{X}_1/B_1},\nabla_{can})\to R^1f_*F_{\mathscr{X}_1}^*\sT_{\mathscr{X}_1/B_1},
$$
and
$$
\Lambda: R^1f_*\sT_{\mathscr{X}_1/B_1}\to R^1f_{*,dR}(F_{\mathscr{X}_1}^*
\sT_{\mathscr{X}_1/B_1},\nabla_{can}).
$$
The coherent sheaves $R^1f_{*,dR}(F_{\mathscr{X}_1}^*
\sT_{\mathscr{X}_1/B_1},\nabla_{can})$ and $R^1f_*\sT_{\mathscr{X}_1/B_1}$ are both locally free, and of rank $3g-2+r$ and $3g-3+r$ respectively by Proposition \ref{dg_of_A} (2)-(3) below. Moreover, the sheaf morphisms $\Gamma$ and $\Gamma\circ \Lambda$ are injective as they are injective at each fiber over a closed point $\neq 0$. Define $\sB$ (resp. $\sW_F$) to be subsheaf $\Gamma(R^1f_{*,dR}(F_{\mathscr{X}_1}^*
\sT_{\mathscr{X}_1/B_1},\nabla_{can}))$ (resp. $\Gamma\circ \Lambda(R^1f_*\sT_{\mathscr{X}_1/B_1})$). Clearly, $\sW_F\subsetneq \sB$. Take and then fix a nowhere vanishing section $\xi_0$ of $\sB-\sW_F$ (it always exists after shrinking $B_1$ if necessary). We define $\sA\subset R^1f_*F_{\mathscr{X}_1}^*\sT_{\mathscr{X}_1/B_1}$ to be the translation of $\sW_F$ by $\xi_0$.

{\itshape Construction of $\sK$:} We shall follow the construction of the periodic cone in the smooth case, see Section \ref{consec}. In Step 1, one replaces $p_i$ with the natural maps in the following Cartesian diagram of the fiber product:
$$
\begin{CD}
\text{Jac}(\mathscr{X}_1/B_1)\times_{B_1}\mathscr{X}_1@>p_2>>\mathscr{X}_1\\
@Vp_1VV @VVfV\\
\text{Jac}(\mathscr{X}_1/B_1)@>>>B_1,
\end{CD}
$$
where $\text{Jac}(\mathscr{X}_1/B_1)$
is the relative Jacobian of the semistable curve $f: \mathscr{X}_1\to B_1$. Let $\sL$ be the universal line
bundle over $\text{Jac}(\mathscr{X}_1/B_1)\times_{B_1}\mathscr{X}_1$.
Consider the vector bundle
$$
\V:=p_{1*}\mathcal{H}om(p_2^*F^*\sT_{\mathscr{X}_1/B_1},p_2^{*}
\sT_{\mathscr{X}_1/B_1}^{\frac{p+1}{2}}\otimes \sL^{-1}),
$$
and its associated projective bundle $\P$. Consider the following commutative diagram of fiber products:
$$
\xymatrix{
 \V\times_{B_1} \mathscr{X}_1   \ar[r]^{\pi}    \ar[d] _{g_1}               & \text{Jac}(\mathscr{X}_1/B_1)\times_{B_1}\mathscr{X}_1 \ar[d]_{p_1}    \ar[r]^{ \qquad   p_2}&   \mathscr{X}_1\ar[d]^{f}\\
 \V \ar[r]^{\pi_0 }    &  \text{Jac}(\mathscr{X}_1/B_1)\ar[r]^{}   &  B_1. }
$$
Set $g_2$ (resp. $v$) to be the composite of the upper (resp. lower) horizontal arrows. Note that they form the fiber product
$$
\begin{CD}
\V\times_{B_1}\mathscr{X}_1@>g_2>>\mathscr{X}_1\\
@Vg_1VV @VVfV\\
\V@>v>>B_1.
\end{CD}
$$
Then Step 2 proceeds as in the smooth case, so one obtains a closed subset $\textrm{Ker}(\phi)\subset R^1g_{1,*}g_2^*F^*\sT_{\mathscr{X}_1/B_1}$. By the flat base change theorem, one has a sheaf isomorphism
$$
R^1g_{1,*}g_2^*F^*\sT_{\mathscr{X}_1/B_1}\cong v^*R^1f_*F^*\sT_{\mathscr{X}_1/B_1},
$$
while the latter as vector bundle is isomorphic to $R^1f_*F^*\sT_{\mathscr{X}_1/B_1}\times_{B_1}\V$. By a similar argument in Step 3, $\textrm{Ker}(\phi)\cap (R^1f_*F^*\sT_{\mathscr{X}_1/B_1}\times_{B_1}(\V-\{0\}))$ descends to a closed subset of $R^1f_*F^*\sT_{\mathscr{X}_1/B_1}\times_{B_1}\P$, which we denote again by $\textrm{Ker}(\phi)$. So we consider the proper map
$$
\pi_1: \textrm{Ker}(\phi)\to R^1f_*F^*\sT_{\mathscr{X}_1/B_1}
$$
which is the composite
$$
\textrm{Ker}(\phi)\hookrightarrow R^1g_{1,*}g_2^*F^*\sT_{\mathscr{X}_1/B_1}\cong R^1f_*F^*\sT_{\mathscr{X}_1/B_1}\times_{B_1}\P\to R^1f_*F^*\sT_{\mathscr{X}_1/B_1}.
$$
We define $\sK$ to be the closed subvariety $\pi_1(\textrm{Ker}(\phi))$.

By specializing to a closed point $b\neq 0\in B_1$, one finds that $\sA_{b}$ is a nonzero translation of $\sW_{F,b}$. Of course, this may differ from the original definition of $A$ in the smooth case by a translation. See Proposition \ref{affine}. But it is clear that $\sA_{b}\cap \sK_{b}\neq \emptyset$ iff $A\cap K\neq \emptyset$ by Remark \ref{twist}. The following paragraphs are devoted to $\sA_0\cap \sK_0$. For simplicity of notation, we shall again use $A$ for $\sA_0$, $B$ for $\sB_0$, $K$ for $\sK_0$, $\alpha$ for the induced morphism of $\Gamma$ on fibers over 0, and similarly $\beta$ for $\Lambda$ over 0.

\subsection{The $(0,3)$-curve}\label{subsection on 0,3 curve}
We study first the unique $(0,3)$-curve up to isomorphism. We equip $\chi=\P^1$ with the log structure associated to the divisor of three distinct points. In this case, $\omega_{\chi/k}=\sO_{\P^1}(1)$ and we are considering the following logarithmic Higgs bundle
$$
(\sO_{\P^1}(1)\oplus \sO_{\P^1},\theta), \theta=id: \sO_{\P^1}(1)\to \sO_{\P^1}\otimes \omega_{\chi/k}.
$$
Thus the ambient vector space $H^1(\chi, F^*\sT_{\chi/k})=H^1(\P^1,\sO_{\P^1}(-p))$ is of dimension $p-1$; The linear subspace
$$
W_F =\textrm{Im}(F^*: H^1(\chi, \sT_{\chi/k})\to H^1(\chi, F^*\sT_{\chi/k}))
$$
is of dimension zero. Hence $A$ is just a nonzero vector inside the line $B: \textrm{Im}(H^1_{dR}(F^*\sT_{\chi/k},\nabla_{can})\to H^1(\chi, F^*\sT_{\chi/k}))$. Here the dimension and the injectivity are proved in the same way as in Lemma \ref{dimension of B} and Lemma \ref{injective}.

\begin{lemma}\label{unique intersection in (0,3)-curve}
The periodic cone $K\subset H^1(\chi, F^*\sT_{\chi/k})$ is the whole space. Therefore, $A\cap K$ consists of one point $\xi_0$.
\end{lemma}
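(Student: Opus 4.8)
The plan is to show directly that every nonzero class $\xi \in H^1(\P^1, \sO_{\P^1}(-p))$ lies in the periodic cone $K$, i.e.\ that the corresponding extension
$$
0 \to F^*\sO_{\P^1}(1) \to H_\xi \to F^*\sO_{\P^1}(-1) \to 0,
\qquad F^*\sO_{\P^1}(1) = \sO_{\P^1}(p),
$$
admits an invertible subsheaf of the form $\sO_{\P^1}(1) \otimes \mathscr{L}$ with $\mathscr{L} \in \mathrm{Pic}^0(\P^1)$. Since $\mathrm{Pic}^0(\P^1) = 0$, this $\mathscr{L}$ is necessarily trivial, so the task reduces to producing an embedding $\sO_{\P^1}(1) \hookrightarrow H_\xi$. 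Equivalently, using the description \eqref{eq05} (or rather its $(0,3)$-analogue, where $L_1$ is replaced by $\sO_{\P^1}(1)$ after the twist of Remark \ref{twist}), I must show $\xi \in \mathrm{Ker}(\phi_s)$ for some nonzero $s \in \mathrm{Hom}(\sO_{\P^1}(1), \sO_{\P^1}(p)) = H^0(\P^1, \sO_{\P^1}(p-1))$, where
$$
\phi_s : H^1(\P^1, \sO_{\P^1}(-2p)) \longrightarrow H^1(\P^1, \sO_{\P^1}(-1-p))
$$
is induced by $\check s : \sO_{\P^1}(-p) \to \sO_{\P^1}(-1)$.

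The key computation is a dimension count combined with the freedom to vary $s$. For a fixed nonzero $s$ with zero divisor $\mathrm{Div}(s)$ of degree $p-1$, the proof of Proposition \ref{cone} (Step 3 and the formula $\dim\mathrm{Ker}(\phi_s) = \deg\mathrm{Div}(s)$) gives $\dim \mathrm{Ker}(\phi_s) = p-1$. But the ambient space $H^1(\P^1, \sO_{\P^1}(-p))$ also has dimension $p-1$. Hence $\mathrm{Ker}(\phi_s) = H^1(\P^1, \sO_{\P^1}(-p))$ for \emph{every} nonzero $s$: the map $\phi_s$ is identically zero. Concretely, $\phi_s$ factors through $H^1(\P^1, \sO_{\P^1}(-1-p))$, and since $\sO_{\P^1}(-1-p)$ has $h^1 = p$ while the image of the corank-considerations forces the map to vanish — cleanest is simply to observe $\dim\mathrm{Ker}(\phi_s) = p-1 = \dim H^1(\P^1,\sO_{\P^1}(-p))$, so $\phi_s = 0$. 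Therefore every $\xi$ lies in $\mathrm{Ker}(\phi_s)$, so $H_\xi$ acquires the desired subsheaf $\sO_{\P^1}(1)$, i.e.\ $\xi \in K$. This proves $K = H^1(\chi, F^*\sT_{\chi/k})$.

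It then follows immediately that $A \cap K = A$. By the preceding paragraph of the excerpt, in the $(0,3)$-case $W_F = 0$, so $A$ is a single nonzero vector $\xi_0$ inside the line $B$; hence $A \cap K = A = \{\xi_0\}$ consists of exactly one point.

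I expect the only genuinely delicate point to be the bookkeeping in identifying the $(0,3)$-version of the formula \eqref{eq05} and of $\dim\mathrm{Ker}(\phi_s) = \deg\mathrm{Div}(s)$ — one must check that the degree count is insensitive to the twist by $\sO_{\P^1}(1)$ of Remark \ref{twist}, and that the normal-crossings log structure at the three marked points plays no role since $\sT_{\chi/k} = \sO_{\P^1}(-1)$ is already the log tangent sheaf. Once these identifications are in place, the argument is the dimension equality above, which is forced by Riemann--Roch. A secondary point worth a sentence is the degree reason $s \neq 0$ is automatic: the composite $\sO_{\P^1}(1) \hookrightarrow H_\xi \twoheadrightarrow \sO_{\P^1}(p)$ cannot vanish, else $\sO_{\P^1}(1) \hookrightarrow F^*\sO_{\P^1}(-1) = \sO_{\P^1}(-p)$, impossible since $1 > -p$.
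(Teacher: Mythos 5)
Your overall strategy (show that $K$ is the whole ambient space, then intersect with the single point $A$) is the same as the paper's, and your final paragraph deducing $A\cap K=\{\xi_0\}$ from $W_F=0$ is fine. The central step, however --- the claim that $\phi_s=0$ for every $s$ because $\dim\mathrm{Ker}(\phi_s)=p-1$ equals $\dim H^1(\P^1,\sO_{\P^1}(-p))$ --- does not hold, and the error comes from misidentifying the data of the periodic cone in the $(0,3)$-case. Since $\omega_{\chi/k}=\sO_{\P^1}(1)$ has odd degree, there is no theta characteristic $L_1$ here, and the paper works throughout \S4 with the twisted Higgs bundle $(\omega_{\chi/k}\oplus\sO_{\P^1},\theta)$: the relevant extension is $0\to\sO_{\P^1}\to H_\xi\to F^*\omega_{\chi/k}=\sO_{\P^1}(p)\to 0$ (your displayed sequence, with sub $\sO_{\P^1}(p)$ and quotient $\sO_{\P^1}(-p)$, lives in $\mathrm{Ext}^1(\sO_{\P^1}(-p),\sO_{\P^1}(p))=H^1(\sO_{\P^1}(2p))=0$, so it cannot be the right one), and the invertible subsheaf one must produce is $\omega_{\chi/k}^{(p+1)/2}\otimes\mathscr{L}=\sO_{\P^1}(\tfrac{p+1}{2})$, not $\sO_{\P^1}(1)$. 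Consequently $s\in\Hom\bigl(\sO_{\P^1}(\tfrac{p+1}{2}),\sO_{\P^1}(p)\bigr)=H^0(\sO_{\P^1}(\tfrac{p-1}{2}))$, $\deg\mathrm{Div}(s)=\tfrac{p-1}{2}$, and $\dim\mathrm{Ker}(\phi_s)=\tfrac{p-1}{2}<p-1$: no single $\mathrm{Ker}(\phi_s)$ fills the ambient space. (Your computation is also internally inconsistent: with $s\in\Hom(\sO_{\P^1}(1),\sO_{\P^1}(p))$ your $\phi_s$ has source $H^1(\sO_{\P^1}(-2p))$, of dimension $2p-1$, which you then compare with the $(p-1)$-dimensional space $H^1(\sO_{\P^1}(-p))$.)

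The statement $K=H^1(\chi,F^*\sT_{\chi/k})$ is nevertheless true, but it needs a different argument. The paper's is the simplest possible: by Grothendieck's splitting theorem $H_\xi\cong\sO_{\P^1}(d_1)\oplus\sO_{\P^1}(d_2)$ with $d_1+d_2=p$ odd, so $\max(d_1,d_2)\ge\tfrac{p+1}{2}$ and $H_\xi$ automatically contains $\sO_{\P^1}(\tfrac{p+1}{2})$ as an invertible subsheaf; hence every $\xi$ lies in $K$. Alternatively one could salvage a cone-theoretic argument: $K$ is the union of the $\tfrac{p-1}{2}$-dimensional kernels $\mathrm{Ker}(\phi_s)$ over the $\tfrac{p-1}{2}$-dimensional projective space of sections $s$, and by the mechanism of Proposition \ref{cone} together with Lemma \ref{nointer} it is an irreducible closed subvariety of dimension $(p-1)(2g-2+r)=p-1$, hence coincides with the irreducible $(p-1)$-dimensional ambient space. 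But that route is considerably heavier than the splitting argument, and as written your proof does not reach either conclusion.
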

\begin{proof}
Take any $\xi\in H^1(\chi, F^*\sT_{\chi/k})$ which is the extension class of the following short exact sequence:
\begin{align}\label{eq625}
0\to\mathcal{O}_{\P^1} \to H_{\xi}  \to F^*\omega_{\chi/k}=\sO_{\P^1}(p) \to 0.
\end{align}
Note that $H_{\xi}\cong \mathcal{O}_{\P^1}(d_1)\oplus  \mathcal{O}_{\P^1}(d_2)$ for two integers $d_1,d_2$ with $d_1+d_2=p$. Thus the larger number between $d_1$ and $d_2$ must be $\geq\frac{p+1}{2}$. This implies that $H_{\xi}$ has an invertible subsheaf of degree $\frac{p+1}{2}$. Therefore, $K$ is just the whole space $H^1(\chi, F^*\sT_{\chi/k})$.  The unique element of ${\xi_0}=A\cap K$ corresponds to the following nonsplit extension (unique up to isomorphism):
\begin{align}\label{eq627}
  0\to(\mathcal{O}_{\P^1},\nabla_{can}) \to (H_0,\nabla)  \to (F^*\omega_{\chi/k},\nabla_{can}) \to 0.
\end{align}
\end{proof}
Now we proceed to a clearer description of elements in $K=H^1(\chi, F^*\sT_{\chi/k})$. Assume the three marked points of $\chi$ are given by $\{0,1,\infty\}$. Then there is an obvious $W_2$-lifting, say $\tilde \chi$, of the log curve $\chi$, which is given by $(\P^1,0,1,\infty)$ over $W_2$. Let $C=C_{\chi\subset \tilde \chi}$ (resp. $C^{-1}=C_{\chi\subset \tilde \chi}^{-1}$) be the Cartier transform (resp. inverse Cartier transform), see Appendix. It is an equivalence of categories. From an extension of flat bundles,
\begin{align}\label{extension at (0,3)-curve}
0\to (\sO_{\chi},\nabla_{can})\to (H,\nabla)\to (F^*\omega_{\chi/k},\nabla_{can})\to 0,
\end{align}
one obtains an extension of Higgs bundles
\begin{align}\label{extension of Higgs bunlde at (0,3)-curve}
0\to (\sO_{\chi},0)\to (E,\theta)=C(H,\nabla)\to (\omega_{\chi/k},0)\to 0,
\end{align}
and vice versa. Forgetting the Higgs field in the above extension, one sees that $E$ is the direct sum $\sO_{\chi}\oplus \omega_{\chi/k}$, and therefore $(E,\theta)$ is of the form
$$
(E,\theta_a):= (\omega_{\P^1_{\log}}\oplus \sO_{\P^1},\theta_a), \ \theta=a\cdot id: \omega_{\P^1_{\log}}\to \sO_{\chi}\otimes \omega_{\P^1_{\log}} =\omega_{\P^1_{\log}}\ \textrm{for some}\ a\in k.
$$
Here $\omega_{\P^1_{\log}}$ is the sheaf of log differentials on $\P^1$ with poles along $\{0,1,\infty\}$. For each $(E,\theta_a)$, there is a natural extension
\begin{align}\label{explicit extension of Higgs bunlde at (0,3)-curve}
0\to (\sO_{\chi},0)\stackrel{i}{\to} (E,\theta_a)\stackrel{p}{\to} (\omega_{\chi/k},0)\to 0,
\end{align}
where $i: \sO_{\chi}\to E=\omega_{\chi/k}\oplus \sO_{\chi}$ is the natural inclusion $(0,id)$ and $p: E\to\omega_{\chi/k}$ is the natural projection $\left(
                                                                                                                                                          \begin{array}{c}
                                                                                                                                                            id \\
                                                                                                                                                            0\\
                                                                                                                                                          \end{array}
                                                                                                                                                        \right)
$. Set $(H_a,\nabla_a):=C_{\chi\subset \tilde \chi}^{-1}(E,\theta_a)$. Applying the inverse Cartier transform to the extension  (\ref{explicit extension of Higgs bunlde at (0,3)-curve}), we obtain the following extension of $(H_a,\nabla_a)$:
\begin{align}\label{explicit extension at (0,3)-curve}
0\to (\sO_{\chi},\nabla_{can})\to (H_a,\nabla_a)\to (F^*\omega_{\chi/k},\nabla_{can})\to 0.
\end{align}
It is clear that the extension (\ref{explicit extension at (0,3)-curve}) is nonsplit iff $a\neq 0$.
\begin{lemma}\label{max degree subsheaf}
For $a\neq 0$, $H_a=  \mathcal{O}_{\P^1}(\frac{p+1}{2})\oplus  \mathcal{O}_{\P^1}(\frac{p-1}{2})$. In particular, $H_a, a\neq 0$ admits a unique invertible subsheaf $\omega_{\chi/k}^{\frac{p+1}{2}}= \mathcal{O}_{\P^1}(\frac{p+1}{2})$ of degree $\frac{p+1}{2}$.
\end{lemma}
\begin{proof}
The idea for the proof is similar to that of Proposition \ref{prop20}. Note it suffices to show the maximal destabilizing subsheaf of $H_a$ has degree $\frac{p+1}{2}$. First of all, we have already seen in the proof of Lemma \ref{unique intersection in (0,3)-curve} that degree $\frac{p+1}{2}$ invertible subsheaf exists. Let $L\subset H_a$ be any invertible subsheaf of positive degree. Then $L$ cannot be invariant under $\nabla_a$, since the $p$-curvature of $\nabla_a$ is nonzero. So we get a nonzero morphism
$$
\bar \nabla_a: L\to (H_a/L)\otimes \omega_{\chi/k},
$$
which implies $\deg L\leq \frac{\deg H_a+\deg \omega_{\chi/k}}{2}=\frac{p+1}{2}$ as claimed.
\end{proof}
In the above proof, for the maximal destabilizing subsheaf $\omega_{\chi/k}^{\frac{p+1}{2}}\subset H_a$, one obtains further an isomorphism:
\begin{align}\label{graded iso}
 \bar \nabla_a: \omega_{\chi/k}^{\frac{p+1}{2}}\stackrel{\cong}{\longrightarrow} (H_a/\omega_{\chi/k}^{\frac{p+1}{2}}\cong \omega_{\chi/k}^{\frac{p-1}{2}})\otimes \omega_{\chi/k}.
\end{align}

Consider further
\[\xymatrix{
0\ar[r] &   \sO_{\chi}   \ar[r] &    H_a   \ar[r]   &   F^*\omega_{\chi/k}  \ar[r] & 0\\
        &                        & \omega_{\chi/k}^{\frac{p+1}{2}} \ar_{}[u]\ar_{s}[ur] & & \\
}\]
Here $s: \omega_{\chi/k}^{\frac{p+1}{2}}\to F^*\omega_{\chi/k}$ is the composite $\omega_{\chi/k}^{\frac{p+1}{2}}\hookrightarrow H_a\twoheadrightarrow F^*\omega_{\chi/k}$.
\begin{proposition}\label{nonzero position of hodge filtration}
Assume $a\neq 0$ and let $P$ be any marked point of $\chi$. Then the value $s(P)$ is nonzero.
\end{proposition}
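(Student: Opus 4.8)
The plan is to argue by contradiction, using the logarithmic pole of $\nabla$ at $P$. Suppose $s(P)=0$. First I would pass to fibres at $P$. Since $\omega_{\chi/k}^{\frac{p+1}{2}}\hookrightarrow H_0$ is saturated (by \eqref{graded iso} its cokernel is the line bundle $\omega_{\chi/k}^{\frac{p-1}{2}}$), the composite $\omega_{\chi/k}^{\frac{p+1}{2}}|_P\hookrightarrow H_0|_P\twoheadrightarrow F^*\omega_{\chi/k}|_P$ vanishes; as $\omega_{\chi/k}^{\frac{p+1}{2}}|_P$ and the kernel of $H_0|_P\twoheadrightarrow F^*\omega_{\chi/k}|_P$ are both one-dimensional, this gives
\[
\omega_{\chi/k}^{\frac{p+1}{2}}\big|_P\ =\ \sO_{\chi}\big|_P\ \subseteq\ H_0|_P ,
\]
where $\sO_{\chi}|_P$ is the fibre at $P$ of the flat subbundle $\sO_{\chi}\hookrightarrow H_0$ of \eqref{eq627}.

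Next I would compute the residue endomorphism $\Res_P(\nabla)\in\End(H_0|_P)$, which is defined because $\nabla$, being the inverse Cartier transform of a logarithmic Higgs bundle, has at most a logarithmic pole along the marked points. On the flat subbundle $\sO_{\chi}\subseteq H_0$ the connection restricts to the canonical connection of $F^*\sO_{\chi}=\sO_{\chi}$, i.e.\ the exterior derivative, whose residue at $P$ vanishes; on the quotient $F^*\omega_{\chi/k}$ the induced connection is $\nabla_{can}$, again residue-free, the canonical connection on a Frobenius pull-back being residue-free (pull-back frames are horizontal). Therefore $\Res_P(\nabla)$ annihilates $\sO_{\chi}|_P$ and maps $H_0|_P$ into $\sO_{\chi}|_P$.

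The contradiction then comes by comparing this with the graded Higgs field. By \eqref{graded iso}, $\bar\nabla:\omega_{\chi/k}^{\frac{p+1}{2}}\to\omega_{\chi/k}^{\frac{p-1}{2}}\otimes\omega_{\chi/k}$ is an isomorphism of line bundles, hence nowhere zero; composing the fibre $\bar\nabla|_P$ with the residue isomorphism $\Res_P:\omega_{\chi/k}|_P\to k$ (an isomorphism because $P$ is a marked point) produces a \emph{nonzero} map $\omega_{\chi/k}^{\frac{p+1}{2}}|_P\to\omega_{\chi/k}^{\frac{p-1}{2}}|_P$. On the other hand, compatibility of residues with passage to graded pieces identifies this map with $v\mapsto\Res_P(\nabla)(v)\bmod\omega_{\chi/k}^{\frac{p+1}{2}}|_P$. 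Under the assumption $\omega_{\chi/k}^{\frac{p+1}{2}}|_P=\sO_{\chi}|_P$, every such $v$ lies in $\sO_{\chi}|_P$, hence is killed by $\Res_P(\nabla)$, so the map is zero --- contradiction. Hence $s(P)\neq 0$.

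The step carrying actual content, as opposed to bookkeeping, is the pair of residue facts in the second paragraph (the flat sub and the quotient carry residue-free connections, so $\Res_P(\nabla)$ is strictly upper triangular for the flag $\sO_{\chi}|_P\subset H_0|_P$), together with the compatibility of $\Res_P(\bar\nabla)$ with $\Res_P(\nabla)$ read modulo the Hodge filtration. Both are standard for logarithmic connections in characteristic $p$, but I would record them as explicit lemmas; granted them, the argument above closes with no further computation.
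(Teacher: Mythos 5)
Your proof is correct and takes essentially the same route as the paper's: both derive the contradiction from the two facts that $\Res_P(\nabla)$ annihilates the fibre at $P$ of the flat subbundle $\sO_{\chi}\subset H_0$ (which under the assumption $s(P)=0$ contains the image of $\omega_{\chi/k}^{\frac{p+1}{2}}(P)$), while the residue at $P$ of the graded Higgs field $\bar\nabla$ of \eqref{graded iso} is an isomorphism. The only difference is how the residue fact is obtained --- the paper reads it off from the explicit local formula for the connection of $C^{-1}(E,\theta_a)$ computed just before the proposition, whereas you deduce it structurally from the residue-freeness of the canonical connections on the flat sub and quotient; both are valid.
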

We shall take first an important digression into a description of residues of $\nabla_a$ at marked points, which will be applied several times in the later paragraphs. Let us look at the connection of around the node $0$. Let $[x:y]$ be the homogenous coordinate of $\P^1$ with $t=x/y$ an affine coordinate of $\{y\neq 0\}\subset \P^1$. Then $U=\Spec \ k[t,(t-1)^{-1}]$ is an open affine neighborhood of $0$. Take $\tilde U=\Spec \ W_2[t,(t-1)^{-1}]\subset \tilde \chi$, together with the standard log Frobenius lifting $\tilde F$ determined by $t\mapsto t^p$. Use the global basis $e_1=1$ for $\sO_{\P^1}$ and the local basis $e_2=\frac{dt}{t}$ for $\omega_{\P^1_{\log}}$ over $U$. Then by the construction of the inverse Cartier transform as in the Appendix, a local expression of the connection $\nabla_a$ over $U$ is given by:
$$
\nabla_a\{e_1\otimes 1, e_2\otimes 1\}=\{e_1\otimes 1, e_2\otimes 1\}\left(
                                                                 \begin{array}{cc}
                                                                   0 & a^p\frac{dt}{t} \\
                                                                   0 & 0 \\
                                                                 \end{array}
                                                               \right),
$$
where $[e_i]:=e_i\otimes 1, i=1,2$ is the \emph{natural} basis of $H$ over $U$. It follows immediately that the residue of $\nabla_a$ at the origin is expressed by
$$
\textrm{Res}_{0}(\nabla_a)\{[e_1](0),[e_2](0)\}=\{[e_1](0),[e_2](0)\}\left(
                                                                 \begin{array}{cc}
                                                                   0 & a^p \\
                                                                   0 & 0 \\
                                                                 \end{array}
                                                               \right).
$$
Using the transformation $t\mapsto t-1$ (resp. $t\mapsto t^{-1}$), one obtains the analogue for the marked points 1 (resp. $\infty$).

\begin{proof}[Proof of Proposition \ref{nonzero position of hodge filtration}]
Let $P$ be any marked point. By the previous description, the $k$-linear map $\textrm{Res}_{P}(\nabla_a): H_a(P)\to H_a(P)$ has the property that $\textrm{Res}_{P}(\nabla_a)([e_1](P))=0$. On the other hand, the isomorphism (\ref{graded iso}) induces the isomorphism at the point $P$:
$$
\textrm{Res}_{P}(\bar \nabla_a): \omega^{\frac{p+1}{2}}_{\chi/k}(P)\stackrel{\cong}{\longrightarrow} \omega^{\frac{p-1}{2}}_{\chi/k}(P).
$$
Now, if $s(P)$ were zero then $[e_1](P)$ would be a basis for the image of $\omega^{\frac{p+1}{2}}_{\chi/k}(P)\to H_a(P)$. But this is impossible.
\end{proof}
The last lemma of this subsection will be applied in the next subsection.
\begin{lemma}\label{inv under auto}
Let $\phi: \P^1\to \P^1$ be an automorphism preserving $\{0,1,\infty\}$ (i.e. an automorphism of the log curve $\chi$). For any $a\in k$, $\phi^*(H_a,\nabla_a)=(H_a,\nabla_a)$.
\end{lemma}
\begin{proof}
It is equivalent to show $\phi^*(E,\theta_a)=(E,\theta_a)$. But this is obvious.
\end{proof}

\subsection{A totally degenerate $(g,r)$-curve}
Let $\chi$ be  a t.d. $(g,r)$-curve. A direct calculation shows that $\chi$ has $\nu=2g-2+r$ irreducible components and $\delta=3g-3+r$ nodes. Let $\{P_i\in \chi|1\leq i \leq \delta\}$ be the set of nodes of $\chi$. Let $\gamma:\tilde \chi\to \chi$ be the normalization and $\{\chi_i| 1\leq i \leq \nu\}$ its irreducible components. Considering the inverse image of nodes as marked points on $\tilde \chi$, $\tilde \chi$ is therefore a disjoint union of $(0,3)$-curves. In other words, one obtains $\chi$ by gluing $\nu$-copies of $(0,3)$-curves along the marked points in a suitable way. We can assume each copy $\chi_i$ to be $(\P^1, 0,1, \infty)$.
\begin{proposition}\label{dg_of_A}
\begin{enumerate}
\item $\dim H^1(\chi, F^*\sT_{\chi/k})= (2p+1)(g-1)+pr$;
\item The map $F^*=\alpha\circ \beta: H^1(\chi,\sT_{\chi/k})\to H^1(\chi,F^*\sT_{\chi/k})$ is injective and thus $\dim W_F=3g-3+r$;
\item The map $\alpha: H^1_{dR}(F^*\sT_{\chi/k},\nabla_{can})\to H^1(\chi,F^*\sT_{\chi/k})$ is injective and hence $\dim B=3g-2+r$.
\end{enumerate}
\end{proposition}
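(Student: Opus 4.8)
The plan is to reduce everything to the normalization $\gamma\colon\tilde\chi\to\chi$, whose components are $\nu=2g-2+r$ copies of $(\P^1,0,1,\infty)$, together with elementary cohomology of line bundles on $\P^1$. Note first that, however the three special points of a component $\chi_i$ are split between nodes and marked points, one has $\gamma^*\omega_{\chi/k}|_{\chi_i}\cong\sO_{\P^1}(1)$, hence $\gamma^*\sT_{\chi/k}|_{\chi_i}\cong\sO_{\P^1}(-1)$, $\gamma^*F^*\sT_{\chi/k}|_{\chi_i}\cong\sO_{\P^1}(-p)$ and $\gamma^*(F^*\sT_{\chi/k}\otimes\omega_{\chi/k})|_{\chi_i}\cong\sO_{\P^1}(1-p)$; write $\mathbf P$ for the set of $\delta=3g-3+r$ nodes. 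For part (1): tensor the normalization sequence $0\to\sO_\chi\to\gamma_*\sO_{\tilde\chi}\to\bigoplus_{P\in\mathbf P}k_P\to0$ by the line bundle $F^*\sT_{\chi/k}$; since $H^0(\P^1,\sO_{\P^1}(-p))=0$ and $\dim H^1(\P^1,\sO_{\P^1}(-p))=p-1$, the long exact cohomology sequence yields $H^0(\chi,F^*\sT_{\chi/k})=0$ together with a short exact sequence
$$0\to\bigoplus_{P\in\mathbf P}(F^*\sT_{\chi/k})(P)\to H^1(\chi,F^*\sT_{\chi/k})\to H^1(\tilde\chi,\gamma^*F^*\sT_{\chi/k})\to0,$$
so $\dim H^1(\chi,F^*\sT_{\chi/k})=\delta+\nu(p-1)=(3g-3+r)+(2g-2+r)(p-1)=(2p+1)(g-1)+pr$.

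Next I treat the injectivity statements. The map $\alpha$ is injective by a degree argument, exactly as in Lemma \ref{injective}: a class in $\ker\alpha$ is a flat extension $0\to(\sO_\chi,d)\to(H,\nabla)\to(F^*\omega_{\chi/k},\nabla_{can})\to0$ whose underlying extension of bundles splits, and for a bundle splitting $j\colon F^*\omega_{\chi/k}\hookrightarrow H$ the deviation of $j$ from being $\nabla$-horizontal is an $\sO_\chi$-linear map $\tau\colon F^*\omega_{\chi/k}\to\sO_\chi\otimes\omega_{\chi/k}\cong\omega_{\chi/k}$; on each $\chi_i$ this is a map $\sO_{\P^1}(p)\to\sO_{\P^1}(1)$, hence zero as $p>1$, so $\tau$ vanishes on the dense open $\bigsqcup_i\chi_i^{\circ}$ and therefore, being a section of a line bundle on the reduced curve $\chi$, vanishes identically; thus the extension is flat-split and $\ker\alpha=0$. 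The map $\beta$ is injective because the functor $\mathcal E\mapsto(F^*\mathcal E,\nabla_{can})$ is exact and fully faithful --- this is log Cartier descent, which applies since $\chi$ is log smooth over the standard log point --- so it reflects split exact sequences; concretely, if $\beta([E])=0$ then the flat extension attached to $E$ splits, forcing $E$ to split. Consequently $F^*=\alpha\circ\beta$ is injective.

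To finish parts (2) and (3) it remains to compute two dimensions. Running the normalization argument with $\sT_{\chi/k}$ in place of $F^*\sT_{\chi/k}$, and using $H^0(\P^1,\sO_{\P^1}(-1))=H^1(\P^1,\sO_{\P^1}(-1))=0$, the connecting map becomes an isomorphism $\bigoplus_{P\in\mathbf P}\sT_{\chi/k}(P)\xrightarrow{\ \sim\ }H^1(\chi,\sT_{\chi/k})$, so $\dim H^1(\chi,\sT_{\chi/k})=3g-3+r$; with $F^*$ injective this gives $\dim W_F=3g-3+r$ and proves (2). For (3), since $\alpha$ is injective we have $\dim B=\dim H^1_{dR}(F^*\sT_{\chi/k},\nabla_{can})$, and I compute the latter from the hypercohomology spectral sequence in cohomology sheaves of $F_{\chi,*}\bigl[F^*\sT_{\chi/k}\xrightarrow{\nabla_{can}}F^*\sT_{\chi/k}\otimes\omega_{\chi/k}\bigr]$: by the log Cartier isomorphism these sheaves are $\mathcal H^0\cong\sT_{\chi/k}$ and $\mathcal H^1\cong\sT_{\chi/k}\otimes\omega_{\chi/k}\cong\sO_\chi$, and since $\chi$ is a curve the spectral sequence degenerates at $E_2$, giving $\dim H^1_{dR}(F^*\sT_{\chi/k},\nabla_{can})=\dim H^1(\chi,\sT_{\chi/k})+\dim H^0(\chi,\sO_\chi)=(3g-3+r)+1=3g-2+r$ (using $H^0(\chi,\sO_\chi)=k$, as $\chi$ is connected, reduced and proper).

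The main obstacle is precisely the two invocations of log Cartier theory --- descent for $\beta$, and the Cartier isomorphism for $\dim H^1_{dR}$ --- in the singular case: unlike the situations treated in Lemmas \ref{injective} and \ref{dimension of B}, here $\chi$ is singular as a scheme and is log smooth only over the \emph{standard log point}, not over $\mathrm{Spec}\,k$ with trivial log structure, so one must check that these statements remain valid over a nontrivial log base. I would do so concretely: $\chi$ admits the explicit $W_2$-lift obtained by gluing the evident $W_2$-lifts $(\P^1,0,1,\infty)$ of its $(0,3)$-components (cf. the discussion of the $(0,3)$-curve above), and with the explicit local Frobenius liftings $t\mapsto t^p$ and the excellent labeling controlling the node data one can run the Deligne--Illusie recipe of Lemma \ref{dimension of B} relative to the standard log point, and likewise carry out the Cartier-descent argument; alternatively, one can avoid Cartier theory for $\chi$ altogether by gluing the explicit $(0,3)$-curve computation behind Lemma \ref{unique intersection in (0,3)-curve} along a de Rham analogue of the normalization sequence.
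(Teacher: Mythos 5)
Your proposal is correct in substance, and part (1) together with the computation $\dim H^1(\chi,\sT_{\chi/k})=\delta$ coincides with the paper's use of the normalization sequence; but on the two substantive points you take a genuinely different route. For the injectivity of $F^*$ the paper does not factor through $\beta$: it classifies extensions of $\omega_{\chi/k}$ by $\sO_\chi$ explicitly by their gluing matrices $\bigl(\begin{smallmatrix}1&\lambda_i\\0&-1\end{smallmatrix}\bigr)$ at the nodes and observes that Frobenius pullback replaces $\lambda_i$ by $\lambda_i^p$, which is injective on $k^{\delta}$; you instead get injectivity of $\beta$ from log Cartier descent and of $\alpha$ from a degree count (the latter is close in spirit to the paper's own proof of (3), which reduces to the $(0,3)$-components). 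For $\dim H^1_{dR}=3g-2+r$ the paper proves the lower bound by upper semi-continuity from the smooth fibres (cf.\ Lemma \ref{dimension of B}) and the upper bound by an explicit count: nonsplit middle terms $H$ are classified by $\delta$ gluing parameters, the connection on a fixed nonsplit $H$ is unique up to isomorphism (via the residue computation at the nodes), and classes with isomorphic middle terms differ by a scalar. Your conjugate spectral sequence argument is cleaner and gives the dimension in one stroke, but it imports Kato's log Cartier isomorphism for the nodal curve over the standard log point --- a setting not covered by the paper's Appendix, which treats only smooth $X$ with a normal crossing divisor --- and your $\beta$-argument likewise imports log Cartier descent there. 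You correctly flag this as the main point to check; both inputs do hold here ($\chi\to\mathrm{Spec}\,k$ with these log structures is log smooth of Cartier type, and at a node $k[x,y]/(xy)$ the kernel of $d$ is exactly the subring of $p$-th powers), and your proposed fallback of explicit local Frobenius liftings glued via the $(0,3)$-computations is in effect what the paper carries out. One small imprecision to repair: on the nodal curve the absolute Frobenius is not flat, so $\mathcal{E}\mapsto F^*\mathcal{E}$ is not exact on all coherent sheaves; this is harmless for you because every extension in play has locally free terms and is therefore locally split, but the blanket exactness claim should be restricted accordingly.
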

\begin{proof}
(1) One has the following short exact sequence of $\sO_{\chi}$-modules:
\begin{align}\label{eq605}
0 \to F^*\sT_{\chi/k} \to \gamma_*\gamma^*(F^*\sT_{\chi/k})\to \oplus_{i=1}^\delta
k_{P_i}\to 0,
\end{align}
where $k_{P_i}$ denotes the skyscraper sheaf with stalk $k$ at $P_i$ and $0$ elsewhere. Over each component $\chi_i$, one has
$$
\gamma^*(F^*\sT_{\chi/k})|_{\chi_i}\cong F^*\sT_{\chi_i/k}\cong \sO_{\P^1}(-p).
$$
Indeed, one verifies that the natural morphism of locally free $\sO_{\chi_i}$-sheaves $$\gamma^*(\omega_{\chi/k})|_{\chi_i}\to \omega_{\chi_i/k}$$ is an isomorphism (which is \emph{not} automatic as pointed out by the referee since $\gamma$ is only a morphism of schemes). Since $H^1(\chi,\gamma_*\gamma^*(F^*\sT_{\chi/k}))=H^1(\widetilde{\chi}, \gamma^*(F^*\sT_{\chi/k}))$ as
$\gamma$ is finite, it follows that
\[h^1(\chi, F^*\sT_{\chi/k})=\sum_{i=1}^\nu h^1(\chi_i,\sO_{\P^1}(-p))+\delta= (2p+1)(g-1)+pr. \]

(2) Using the long exact sequence of cohomologies associated with the short exact sequence
\begin{align}\label{eq613}
0 \to \sT_{\chi/k} \to \gamma_*\gamma^* \sT_{\chi/k}\to \oplus_{i=1}^\delta
k_{P_i}\to 0,
\end{align}
one calculates the dimension:
$$h^1(\chi, \sT_{\chi/k})=h^0(\chi,  \oplus_{i=1}^\delta k_{P_i})=\delta.$$
It remains to show the injectivity of $F^*$. Like in the smooth case, we shall show that if an extension of the form
\begin{align}\label{eq610}
0\to  \sO_{\chi}   \to  E \to  \omega_{\chi/k} \to 0
\end{align}
is nonsplit, then its Frobenius pull-back is nonsplit.

The restriction of $E$ to each component $\chi_i$ is split. In other words, $E$ is obtained by gluing copies of $ \omega_{\P^1_{\log}}\oplus \sO_{\P^1}$ along marked points.  Any node $P_i$ of $\chi$ is obtained either by gluing two different components along one of three marked points or by gluing one component along two different marked points (i.e. the case of self normal crossing). Since the bundle $\omega_{\P^1_{\log}}\oplus \sO_{\P^1}$ does not change under an automorphism of the log curve $\P^1$, we can assume that \emph{formal} locally at $P_i$ the bundle $E$ is described by gluing two copies of  $[\omega_{\P^1_{\log}}\oplus \sO_{\P^1}]|_{
\hat{U}}$ at 0, where $\hat U$ is the completion at $0$ of the open affine $U$ considered in the $(0,3)$-curve case, and $[\omega_{\P^1_{\log}}\oplus \sO_{\P^1}]|_{
\hat{U}}$ is the pull-back of $[\omega_{\P^1_{\log}}\oplus \sO_{\P^1}]|_{U}$ via the base change $\hat U\to U$. We thank the referee for suggesting us to work formal locally in the case of self-normal crossing. Thus, in the case of self-normal crossing, our computations below should be understood over $\hat U$ by taking the obvious base change. Take the basis $\{e_1\}$ (resp. $\{e_2\}$) of $\sO_{\P^1}$ (resp. $\omega_{\P^1_{\log}}$) in the open affine neighborhood $U$ of $0$. Let $\{e_{1,i},e_{2,i}\}_{i=1,2}$ be two copies of such. We shall see that the set of isomorphism classes of extensions (\ref{eq610}) is in one-to-one correspondence with the set $\{A_i\}_{1\leq i\leq \delta}$ with $A_i$ the following upper triangular matrix
$$
A_i=\left(
                       \begin{array}{cc}
                         1 & \lambda_i \\
                        0& -1 \\
                       \end{array}
                     \right).
$$
Indeed, the formula
\begin{align}\label{eq612}
\{e_{1,2}(0),e_{2,2}(0)\}=\{e_{1,1}(0),e_{2,1}(0)\}A_i
\end{align}
gives the gluing data of the fibers of two copies of $\sO_{\P^1}\oplus \omega_{\P^1_{\log}}$ at 0 and any different $\lambda_i$ gives a locally non-isomorphic $E$ over an open neighborhood of $P_i$. The second diagonal element in $A_i$ is $-1$ for the following reason: Let $\{t_i\}_{i=1,2}$ be the local coordinate of an open affine neighborhood $U_i\subset \chi_i, i=1,2$ of the node $P_i$. Set $U:=U_1\cup U_2$. Then $\omega_{\chi/k}(U)$ is expressed by
$$
\frac{\sO_U\{\frac{dt_1}{t_1}\}\oplus \sO_{U}\{\frac{dt_2}{t_2}\}}{\sO_U\{\frac{dt_1}{t_1}+\frac{dt_2}{t_2}\}}.
$$
Therefore, $\frac{dt_2}{t_2}=-\frac{dt_1}{t_1}$ is the transition relation for the gluing of $\omega_{\chi/k}$ at the node. Thus, we have obtained an explicit $k$-linear isomorphism
$$
H^1(\chi, \sT_{\chi/k})\cong k^{\delta}, \quad [E]\mapsto (\lambda_1,\cdots,\lambda_{\delta}).
$$

Now let $\{[e_{1,i}], [e_{2,i}]\}_{i=1,2}$ be the natural basis of the restrictions of $F^*E$ to $\chi_1$ and $\chi_2$. It is clear that at $P_i$, the following formula holds:
$$
\{[e_{1,2}](0), [e_{2,2}](0)\}=\{[e_{1,1}](0), [e_{2,1}](0)\} \left(
                       \begin{array}{cc}
                         1 & \lambda_i^p \\
                        0& -1 \\
                       \end{array}
                     \right).
$$
Thus, if $F^*E$, as an extension of $F^*\omega_{\chi/k}$ by $F^*\sO_{\chi}$, is nonsplit, then there is a nonzero $\lambda_{i_0}^p$ for some $i_0$. As $\lambda_{i_0}$ is nonzero, $E$ is nonsplit as well. \\

(3) Note that one still has the isomorphism
$$
H^1_{dR}(F^*\sT_{\chi/k},\nabla_{can})\cong \textrm{Ext}^1((F^*\omega_{\chi/k},\nabla_{can}),(F^*\sO_{\chi},\nabla_{can})).
$$
The latter space classifies the isomorphism classes of extensions of the following form:
\begin{align}\label{eq604}
0\to  (F^*\sO_{\chi},\nabla_{can})   \to  (H, \nabla) \to (F^*\omega_{\chi/k}, \nabla_{can}) \to 0.
\end{align}
Like in the smooth case, one interprets $\alpha$ as the forgetful map
$$
\textrm{Ext}^1((F^*\omega_{\chi/k},\nabla_{can}),(F^*\sO_{\chi},\nabla_{can}))\to \textrm{Ext}^1(F^*\omega_{\chi/k},F^*\sO_{\chi})\cong H^1(\chi, F^*\sT_{\chi/k}).
$$
Next we shall show that if the bundle $H$ of the extension $(H,\nabla)$ of form (\ref{eq604}) is split, then the extension is split. The proof is similar to (2) but easier. First of all, the splitting of $H$ restricts to a splitting of $H|_{\chi_i}$ on the irreducible component $\chi_i$ for each $i$. As $\chi_i$ is the $(0,3)$ curve, it follows that $(H,\nabla)|_{\chi_i}$ is also split. Thus $\nabla|_{\chi_i}$ is nothing but the canonical connection $\nabla_{can}$. Therefore, $\nabla$ is just the canonical connection on $H=F^*\omega_{\chi/k}\oplus F^*\sO_{\chi}$, and therefore $(H,\nabla)$ is split.

As $H^1_{dR}(F^*\sT_{\chi/k},\nabla_{can})$ is the fiber of the coherent sheaf $R^1f_{*,dR}(F_{\mathscr{X}_1}^*
\sT_{\mathscr{X}_1/B_1},\nabla_{can})$ at $0\in B_1$, its dimension is greater than or equal to $3g-2+r$ by upper semi-continuity. Thus we need only show $\dim H^1_{dR}(F^*\sT_{\chi/k},\nabla_{can})\leq 3g-2+r$. Consider the following commutative diagram of linear morphisms:
$$
\xymatrix{
 B= H^1_{dR}(F^*\sT_{\chi/k},\nabla_{can}) \ar[d]_{\alpha} \ar[r]^{{\rm res}|_{\chi_1}} & H^1_{dR}(F^*\sT_{\chi_1/k},\nabla_{can}) \ar[d]^{\alpha} \\
 H^1(\chi,F^*\sT_{\chi/k}) \ar[r]^{{\rm res}|_{\chi_1}} & H^1(\chi_1,F^*\sT_{\chi_1/k}).}
$$
The vertical morphisms $\alpha$ are injective by the previous discussion. Set $L=\alpha(H^1_{dR}(F^*\sT_{\chi_1/k},\nabla_{can}))$ (which is just the line $B$ in the subsection \ref{subsection on 0,3 curve}) and let $B_0$ be the kernel of the morphism ${\rm res}|_{\chi_1}: \alpha(B)\to L$.  We shall get a key characterization of the image $\alpha(B)\subset  H^1(\chi,F^*\sT_{\chi/k})$ by the following calculation involving residues:

Take an extension of the form (\ref{eq604}). Let $(H_i,\nabla_i)$ be the restriction of $(H,\nabla)$ to $\chi_i$, and let $P$ be a node obtained by gluing $\chi_1$ and $\chi_2$ (they may equal) along marked points. As discussed in the $(0,3)$-curve case, $(H_i,\nabla_i)=C^{-1}(E:=\omega_{\P^1_{\log}}\oplus \sO_{\P^1},\theta_{a_i})$ for a scalar $a_i\in k$. Also, the scalar $a_i$ is kept under any automorphism of the log curve by Lemma \ref{inv under auto}. Thus, we can assume that under the natural basis $\{[e_{1,i}](P),[e_{2,i}](P)\}$ of $H_i(P), i=1,2$, the gluing matrix at the node $P$ is given by
$\left(
                                                                           \begin{array}{cc}
                                                                             1 &
                                                                             \lambda \\
                                                                            0 &-1 \\
                                                                           \end{array}
                                                                         \right)
$,
that is, the linear transition relation from $H_1(P)$ to $H_2(P)$ is determined by
\begin{align}\label{transition relation B-matrix}
\{[e_{1,2}](P),[e_{2,2}](P)\}=\{[e_{1,1}](P),[e_{2,1}](P)\}\left(
                                                                           \begin{array}{cc}
                                                                             1 &
                                                                             \lambda \\
                                                                            0 &-1 \\
                                                                           \end{array}
                                                                         \right).
\end{align}
By the discussion on the residues of connections at marked points in the $(0,3)$-curve case, and since $\nabla_1$ and $\nabla_2$ glue at the node $P$, we obtain the equality
$$
\left(
  \begin{array}{cc}
    0 & a_1^p \\
    0 & 0 \\
  \end{array}
\right)\left(
         \begin{array}{cc}
           1 & \lambda \\
           0 & -1 \\
         \end{array}
       \right)=\left(
  \begin{array}{cc}
    1 & \lambda \\
    0 & -1 \\
  \end{array}
\right)\left(
         \begin{array}{cc}
           0& -a_2^p \\
           0 & 0 \\
         \end{array}
       \right).
$$
It follows that $a_1=a_2$. As $\chi$ is connected, it follows that for each $i$,
$$
(H_i,\nabla_i)=C^{-1}(E,\theta_{a})
$$
for an $a\in k$.

Assume now that the restriction to $\chi_1$ of an extension (\ref{eq604}) is split. Then $a=a_1=0$ and consequently, all restrictions to $\chi_i$s are split. This means that elements in $B_0$ are contained in those obtained by gluing copies of the split extension
$$
0\to F^*\sO_{\P^1}\to F^*\omega_{\P^1_{\log}}\oplus F^*\sO_{\P^1}\to F^*\omega_{\P^1_{\log}}\to 0
$$
along the nodes, which are exactly elements of the subspace $W_F\subset  H^1(\chi,F^*\sT_{\chi/k})$ in (2). Thus we have obtained the required inequality
$$
\dim B=\dim \alpha(B)\leq \dim B_0+\dim L\leq \dim W_F+1=3g-2+r.
$$
\end{proof}
The above proof shows that the morphism ${\rm res|_{\chi_1}}: \alpha(B)\to L$ is surjective and its kernel is exactly $W_F$.  That is, we have a short exact sequence of $k$-vector spaces
$$
0\to W_F\to \alpha(B)\stackrel{{\rm res|_{\chi_1}}}{\to}L\to 0.
$$
By equipping extensions in $W_F$ with the canonical connections,  one identifies naturally $W_F$ with its inverse image $\alpha^{-1}(W_F)\subset B$. On the other hand, recall that $A$ is by definition a translation of $W_F$ by the element $\xi_0$ in $B-W_F$.  Thus we know that $\xi_0$ restricts to $\chi_1$ (and also any other component $\chi_i$) a nonsplit extension. Thus the line $k\{\xi_0\}\subset B$ induces a splitting of the short exact sequence of $k$-vector spaces
$$
0\to W_F\to B\stackrel{{\rm res|_{\chi_1}}}{\to}H^1_{dR}(F^*\sT_{\chi_1/k},\nabla_{can})\to 0.
$$
Therefore, $B=W_F\oplus k\{\xi_0\}$.  After these preparations, we proceed now to prove $A\cap K\neq \emptyset$.
\begin{proposition}\label{AKinter}
For every totally degenerate $(g, r)$-curve, the set $A\cap K$ consists of one unique element.
\end{proposition}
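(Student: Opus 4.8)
The plan is to work entirely on the normalization $\tilde\chi$, using the explicit gluing description of extensions established in Proposition \ref{dg_of_A}. Recall that $H^1(\chi,F^*\sT_{\chi/k})$ has dimension $(2p+1)(g-1)+pr$, that $A$ is a fixed nonzero translate of the $(3g-3+r)$-dimensional subspace $W_F$, and that by dimension count \eqref{eq07} one expects $A\cap K$ to be a finite set; the goal is to pin it down to exactly one point. I would parametrize a point $\xi\in H^1(\chi,F^*\sT_{\chi/k})$ by its component data: via the sequence \eqref{eq605}, $\xi$ is determined by its image $(\xi_1,\dots,\xi_\nu)$ in $\bigoplus_i H^1(\chi_i,\sO_{\P^1}(-p))$ together with a gluing contribution from the $\delta$ nodes. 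The subspace $W_F$ is, by the proof of Proposition \ref{dg_of_A}(2), exactly the locus where each $\xi_i=0$ (i.e.\ the component-wise Frobenius pullbacks of the $(0,3)$-extension classes, all of which vanish since $H^1(\P^1,\sT_{\P^1_{\log}})=0$), so $W_F$ is the "pure gluing" part; and $A=\xi_0+W_F$ where $\xi_0$ is the distinguished element whose restriction to each component $\chi_i$ is the unique nonsplit flat $(0,3)$-extension class of Lemma \ref{unique intersection in (0,3)-curve}.

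Next I would analyze membership in $K$. A point $\xi\in A$ gives an extension $0\to F^*\omega_{\chi/k}^{-1}\to H_\xi\to F^*\omega_{\chi/k}\to 0$ (in the twisted normalization of Remark \ref{twist}) whose restriction to every component $\chi_i$ is the flat bundle $H_0\cong\sO_{\P^1}(\tfrac{p+1}{2})\oplus\sO_{\P^1}(\tfrac{p-1}{2})$ from \eqref{eq627}, carrying the canonical Hodge/sub-line $\omega_{\chi_i}^{(p+1)/2}\hookrightarrow H_0$, with the section $s\colon\omega_{\chi_i}^{(p+1)/2}\to F^*\omega_{\chi_i}$ nonvanishing at all three marked points by Proposition \ref{nonzero position of hodge filtration}. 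Membership of $\xi$ in $K$ means precisely that the component-wise sub-line bundles $L_1\otimes\mathscr L|_{\chi_i}$ glue along the $\delta$ nodes into a global invertible subsheaf of $H_\xi$ — equivalently, that the one-dimensional fibers $\mathrm{Im}(s)(P)\subset H_\xi(P)$ on the two branches at each node $P$ are matched by the gluing matrix $A_i$ (resp.\ $B_i$) of that extension. Because the unique nonsplit $(0,3)$-datum is rigid (Lemma \ref{unique intersection in (0,3)-curve} plus the residue computation: the gluing matrix at a node is forced, as in the uniqueness argument in Proposition \ref{dg_of_A}(3)), each node imposes one scalar condition that is \emph{affine-linear and nondegenerate} in the gluing parameter $\lambda_i$ at that node. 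Since $A$ is cut out inside the $(2p+1)(g-1)+pr$-dimensional space as the $\xi_0$-translate of the $\delta$-dimensional $W_F$ (coordinates $\lambda_1,\dots,\lambda_\delta$), and $K\cap A$ imposes exactly one nondegenerate equation per node, the intersection is a single point — provided the equations are consistent and the solution indeed lies in $K$ (i.e.\ the resulting global subsheaf has the right degree $\tfrac{2g-2+r}{2}$, which it does since each local piece has degree $\tfrac{1}{2}$ and the marked-point nonvanishing forces the divisor of $s$ to avoid the nodes).

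The step I expect to be the main obstacle is showing that the $\delta$ nodal gluing conditions on $A$ have a (unique) common solution — in other words, that one can actually match the Hodge sub-lines across every node simultaneously. Locally at one node this is automatic from the rigidity of the $(0,3)$-datum, but globally there is a potential cocycle obstruction: going around a loop in the dual graph of $\chi$ one must return to a consistent identification. I would handle this by the same residue bookkeeping used in Proposition \ref{dg_of_A}(3): the residues $\mathrm{Res}_P(\nabla)$ are nilpotent with prescribed form $\left(\begin{smallmatrix}0&a^p\\0&0\end{smallmatrix}\right)$, the gluing matrices are upper-triangular with diagonal $(1,-1)$, and compatibility of $\nabla_1,\nabla_2$ at each node forces $a_1=a_2$; connectedness of $\chi$ then propagates a single value $a=a_1$ over all components, which both fixes the flat bundle $(H,\nabla)$ up to isomorphism and trivializes the would-be monodromy obstruction, leaving exactly one gluing class. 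Tracking the action of $\mathrm{Aut}$ of each $\sO_{\P^1}\oplus\omega_{\P^1_{\log}}$ (diagonal torus) to normalize the $\lambda_i$ is the remaining bookkeeping; once it is done, the count "$\dim A=\delta$ minus $\delta$ nondegenerate node equations $=0$" gives the claimed unique point, and its nonvanishing at marked points together with the degree computation certifies it lies in $A\cap K$.
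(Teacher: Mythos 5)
Your proposal is correct and follows essentially the same route as the paper: parametrize $A$ by the $\delta$ nodal gluing parameters over the rigid component datum $\xi_0$, use the uniqueness of $\omega_{\chi_i/k}^{(p+1)/2}\hookrightarrow H_i$ on each $(0,3)$-component together with the nonvanishing of $s$ at the marked points (so $b_j\neq 0$), and observe that matching the sub-lines at the node $P_j$ is a single nondegenerate condition in $\mu_j$ alone, with unique solution $\mu_j=-2/b_j$. The ``cocycle obstruction'' you single out as the main obstacle is actually vacuous: since each nodal condition involves only its own parameter $\mu_j$ and the values $b_j$ are determined by the fixed, $\mu$-independent component datum, the $\delta$ equations decouple and are automatically simultaneously solvable, so the residue/connection-uniqueness bookkeeping you invoke (which the paper uses only to bound $\dim B$ in Proposition \ref{dg_of_A}(3)) is not needed at this step.
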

\begin{proof}
Let $(H_0,\nabla)$ be the corresponding extension to $\xi_0$. Then the elements $(H,\nabla)$ of $A$ are in one-to-one correspondence with tuples $\mu:=\{\mu_1,\cdots, \mu_{\delta}\}\in k^{\delta}$, so that $\mu=0$ corresponds to $(H_0,\nabla)$. Explicitly, let $(H=H_{\mu},\nabla)$ be the corresponding flat bundle over $\chi$ and let $(H_i,\nabla_i)$ be the restriction of $(H,\nabla)$ to $\chi_i$, then the gluing datum under the natural basis at a node $P_i$ is given by a upper triangular matrix $B_i=\left(
                                                                           \begin{array}{cc}
                                                                             1 &
                                                                             \mu_i \\
                                                                            0 &-1 \\
                                                                           \end{array}
                                                                         \right)
$ with the transition relation
\begin{align}\label{transition relation B-matrix}
\{[e_{1,2}](P_i),[e_{2,2}](P_i)\}=\{[e_{1,1}](P_i),[e_{2,1}](P_i)\}B_i.
\end{align}
We claim that there exists a unique tuple $\mu$ such that the corresponding $H_{\mu}$ admits an invertible subsheaf $(\omega_{\chi/k})^{\frac{p+1}{2}}\otimes \mathscr{L}$ for some degree zero invertible sheaf $\mathscr{L}$. This claim is equivalent to saying that $A$ intersects with $K$ at one unique point (multiplicity not counted), and it involves again a gluing argument.

By Lemma \ref{max degree subsheaf}, we know that the restriction $H_i$ of $H$ to $\chi_i$ ($i$ arbitrary) admits a unique $\omega_{\chi_i/k}^{\frac{p+1}{2}}\hookrightarrow H_i$. Assume that $P_i$ is a node obtained by gluing $\chi_j,j=1,2$ (they may equal) along marked points. By Proposition \ref{nonzero position of hodge filtration}, the image of the fiber $\omega_{\chi_j/k}^{\frac{p+1}{2}}(P_i)$ inside $H_j(P_i)$ admits a basis of the form
$$
h_j=b[e_{1,j}](P_i)+[e_{2,j}](P_i).
$$
Thus, in order to glue these two invertible subsheaves $\omega_{\chi_j/k}^{\frac{p+1}{2}}, j=1,2$ along $P_i$ together, the necessary and sufficient condition is the one-dimensional subspace $\omega_{\chi_1/k}^{\frac{p+1}{2}}(P_i)\subset H_1(P_i)$ maps onto the one-dimensional subspace $\omega_{\chi_2/k}^{\frac{p+1}{2}}(P_i)\subset H_2(P_i)$ under the transition from $H_1(P_i)$ to $H_2(P_i)$, which is given by the matrix $B_i$ under the natural basis as above. By the relation (\ref{transition relation B-matrix}), it is equivalent to the solvability of the following equation:
$$
\left(
  \begin{array}{c}
    b \\
    1 \\
  \end{array}
\right)=u\left(
                  \begin{array}{cc}
                    1 & \mu_i \\
                    0 & -1 \\
                  \end{array}
                \right)\left(
           \begin{array}{c}
             b \\
             1 \\
           \end{array}
         \right), \quad\textrm{for some}\ u\in k.
$$
One obtains the unique solution $u=-1$ and $\mu_i=-2b$. It follows that there is a unique $\mu\in k^{\delta}$ such that the invertible subsheaves $\{\omega_{\chi_i/k}^{\frac{p+1}{2}}\}_i$ of $H_i$ glue into an invertible subsheaf of the corresponding $H_{\mu}$. Note that the sheaf
$\omega_{\chi/k}^{\frac{p+1}{2}}$ restricts to the sheaf $\omega_{\chi_{i}/k}^{\frac{p+1}{2}}$ over the component $\chi_i$ and the transition at each node is the multiplication by $(-1)^{\frac{p+1}{2}}$. The above calculation shows that the transition at each node of the glued sheaf is always the multiplication by $-1$. Thus, if $\frac{p+1}{2}$ is odd, they are indeed the same sheaf. If $\frac{p+1}{2}$ is even, they differ from each other by the invertible sheaf $ \mathscr{L}$ obtained by gluing $\sO_{\chi_i}$s along nodes with transitions by $-1$. However, $\mathscr{L}$ is two-torsion in either case and hence degree zero.
\end{proof}

Let $\xi_1$ be the unique point in $A\cap K$. Assume that
$\xi_1\in\mathrm{Ker}(\phi_s)$ for some section  $s\in
\Hom((\omega_{\chi/k})^{\frac{p+1}{2}}\otimes \mathscr{L}, F^*(\omega_{\chi/k}) )$, where $\mathscr{L}$ is a certain two-torsion line bundle. Its dual $\check s$ induces the composite
\begin{align}\label{eq614}
 F^*\sT_{\chi/k}=\sT_{\chi/k}^{p} \overset{\check{s}}\to  \sT_{\chi/k}^{\frac{p+1}{2}}\otimes \mathscr{L}^{-1} \overset{\check{s}}\to  \sT_{\chi/k}\otimes \mathscr{L}^{-2}= \sT_{\chi/k}.
\end{align}
The induced map on the first cohomologies is also denoted by $\check{s}^2$.
\begin{proposition}\label{tdord}
Use notation as above. The composite map
\begin{align} \label{ordinary3}
H^1(\chi, \sT_{\chi/k}  )\overset{F^*}\to H^1(\chi,  F^*\sT_{\chi/k})
\overset{\check{s}^2}\to H^1(\chi, \sT_{\chi/k}  )
\end{align}
is injective.
\end{proposition}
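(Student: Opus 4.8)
The plan is to turn the claim into a linear-algebra statement inside $H^1(\chi,F^*\sT_{\chi/k})$ and then verify it by a normalization argument. Since $F^*\colon H^1(\chi,\sT_{\chi/k})\to H^1(\chi,F^*\sT_{\chi/k})$ is injective with image $W_F$ (Proposition \ref{dg_of_A}), the composite in \eqref{ordinary3} is injective if and only if
\[
W_F\cap \mathrm{Ker}\bigl(\check s^2\colon H^1(\chi,F^*\sT_{\chi/k})\to H^1(\chi,\sT_{\chi/k})\bigr)=0 .
\]
By Proposition \ref{dg_of_A} one has $\dim W_F=3g-3+r$ and $\dim H^1(\chi,F^*\sT_{\chi/k})=(2p+1)(g-1)+pr$, so this is the expected-codimension situation once one knows $\dim\mathrm{Ker}(\check s^2)=(p-1)(2g-2+r)$; the content is that the intersection is actually $0$.

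First I would describe $\check s^2$ at the level of sheaves. Writing $F^*\sT_{\chi/k}=\sT_{\chi/k}^{\otimes p}$ and using $\mathscr{L}^{\otimes 2}\cong\sO_\chi$, the composite in \eqref{eq614} is multiplication by a nonzero section of $\omega_{\chi/k}^{p-1}\cong\mathcal{H}om_{\sO_\chi}(F^*\sT_{\chi/k},\sT_{\chi/k})$ whose divisor is $2D$, where $D=\mathrm{Div}(s)$ has $\deg D=\tfrac{p-1}{2}(2g-2+r)$; set $D_i:=D|_{\chi_i}$, so $\deg D_i=\tfrac{p-1}{2}$. The crucial geometric input is Proposition \ref{nonzero position of hodge filtration}, applied componentwise: $s|_{\chi_i}$ does not vanish at any of the three special points of $\chi_i$, so $\mathrm{Supp}(2D)$ lies in the smooth locus of $\chi$, disjoint from the nodes $P_1,\dots,P_\delta$. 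Hence $\check s^2$ is an isomorphism near every node, in particular injective, and fits into a short exact sequence
\[
0\longrightarrow F^*\sT_{\chi/k}\xrightarrow{\ \check s^2\ }\sT_{\chi/k}\longrightarrow \sT_{\chi/k}|_{2D}\longrightarrow 0
\]
whose cokernel is a torsion sheaf supported away from the nodes. Since $H^0(\chi,\sT_{\chi/k})=0$ (a global section restricts to $0$ on each $\chi_i\cong\P^1$ as $\sT_{\chi_i/k}=\sO_{\P^1}(-1)$), the connecting map $\partial\colon H^0(\chi,\sT_{\chi/k}|_{2D})\to H^1(\chi,F^*\sT_{\chi/k})$ is injective with image $\mathrm{Ker}(\check s^2)$; this also computes $\dim\mathrm{Ker}(\check s^2)=\deg 2D=(p-1)(2g-2+r)$.

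Next I would identify $W_F$. Let $r\colon H^1(\chi,F^*\sT_{\chi/k})\to\bigoplus_i H^1\bigl(\chi_i,(F^*\sT_{\chi/k})|_{\chi_i}\bigr)=\bigoplus_i H^1(\P^1,\sO_{\P^1}(-p))$ be restriction to the components. The normalization sequence $0\to F^*\sT_{\chi/k}\to\gamma_*\gamma^*F^*\sT_{\chi/k}\to\bigoplus_i k_{P_i}\to0$ together with $H^0(\widetilde\chi,\gamma^*F^*\sT_{\chi/k})=0$ show that $\mathrm{Ker}(r)$ is the $\delta$-dimensional image of $\bigoplus_i k_{P_i}$. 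On the other hand, for an extension $E$ of $\omega_{\chi/k}$ by $\sO_\chi$ each $E|_{\chi_i}$ is split (it is classified by $H^1(\P^1,\sO_{\P^1}(-1))=0$), hence $(F^*E)|_{\chi_i}=F_{\chi_i}^*(E|_{\chi_i})$ is split, so $r\circ F^*=0$; comparing dimensions ($\dim W_F=\delta=\dim\mathrm{Ker}(r)$) forces $W_F=\mathrm{Ker}(r)$. It remains to prove that $r\circ\partial$ is injective. Because $\mathrm{Supp}(2D)$ misses the nodes, the above short exact sequence restricts on $\chi_i$ to the exact sequence $0\to\sO_{\P^1}(-p)\to\sO_{\P^1}(-1)\to\sT_{\chi_i/k}|_{2D_i}\to0$, and applying $\gamma_*\gamma^*$ to the sequence on $\chi$ preserves exactness. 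By naturality of connecting homomorphisms (for the adjunction map $\mathrm{id}\to\gamma_*\gamma^*$ of short exact sequences), $r\circ\partial$ factors as
\[
H^0(\chi,\sT_{\chi/k}|_{2D})\xrightarrow{\ \sim\ }\bigoplus_i H^0\bigl(\chi_i,\sT_{\chi_i/k}|_{2D_i}\bigr)\xrightarrow{\ \oplus_i\partial_{\chi_i}\ }\bigoplus_i H^1(\P^1,\sO_{\P^1}(-p)),
\]
the first arrow being an isomorphism because $\gamma$ is an isomorphism over $\mathrm{Supp}(2D)$, and each $\partial_{\chi_i}$ being an isomorphism because $H^0(\P^1,\sO_{\P^1}(-1))=H^1(\P^1,\sO_{\P^1}(-1))=0$. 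Hence $r\circ\partial$ is an isomorphism, so $\mathrm{Im}(\partial)\cap\mathrm{Ker}(r)=\mathrm{Ker}(\check s^2)\cap W_F=0$, which is the assertion.

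I expect the only genuinely delicate step to be the node bookkeeping: verifying that $\mathrm{Div}(s)$ avoids every node — which is exactly where Proposition \ref{nonzero position of hodge filtration} and the excellent labelling are used — and checking that the torsion sequence $0\to F^*\sT_{\chi/k}\to\sT_{\chi/k}\to\sT_{\chi/k}|_{2D}\to0$ stays short exact after restriction to, and after $\gamma_*\gamma^*$ of, a component. Both come down to the disjointness of $2D$ from the singular locus of $\chi$; granting that, what remains is a diagram chase with the normalization sequences and the elementary cohomology of line bundles on $\P^1$.
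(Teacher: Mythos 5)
Your proof is correct, and while it rests on the same two essential inputs as the paper's argument --- the normalization sequence $0\to F^*\sT_{\chi/k}\to\gamma_*\gamma^*F^*\sT_{\chi/k}\to\oplus_{i}k_{P_i}\to 0$ and the fact from Proposition \ref{nonzero position of hodge filtration} that $s$, hence $\check{s}^2$, is nonvanishing at every node --- the mechanics are genuinely different. The paper stacks the normalization sequences for $\sT_{\chi/k}$, $F^*\sT_{\chi/k}$ and $\sT_{\chi/k}$ into a three-row commutative diagram whose skyscraper column carries the maps $(\cdot)^p$ followed by an isomorphism; since $H^1(\chi,\sT_{\chi/k})\cong\oplus_i k_{P_i}$, the composite \eqref{ordinary3} is directly identified with an injective (indeed bijective) map. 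You instead reformulate the claim as $W_F\cap\mathrm{Ker}(\check{s}^2)=0$ and realize the two subspaces through different exact sequences: $W_F$ as the kernel of pullback to the normalization, and $\mathrm{Ker}(\check{s}^2)$ as the image of the connecting map of the vanishing-divisor sequence $0\to F^*\sT_{\chi/k}\to\sT_{\chi/k}\to\sT_{\chi/k}|_{2D}\to 0$, which does not appear in the paper; transversality then follows because $\gamma^*\circ\partial$ is an isomorphism componentwise. The paper's route is shorter and gives the composite explicitly as a $p$-power map; yours makes the ``expected codimension'' structure visible and yields $\dim\mathrm{Ker}(\check{s}^2)=(p-1)(2g-2+r)$ as a byproduct, at the cost of the extra bookkeeping (which you correctly identify and justify) that $2D$ avoids the nodes so that the divisor sequence remains exact after applying $\gamma_*\gamma^*$. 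Two cosmetic points: your map $r$ should be read as pullback along the normalization $\gamma$ rather than literal restriction to the subcurves $\chi_i$, since a component of a totally degenerate curve may carry a self-node; and the letter $r$ is already in use for the number of marked points.
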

\begin{proof}
First of all, Proposition \ref{nonzero position of hodge filtration} shows that the value of $s$ at each node of $\chi$ is nonzero. So is its dual $\check s$. This fact gives rise to the isomorphism in the lower right corner of the
following commutative diagram:
\[\xymatrix{
0   \ar[r] &    \sT_{\chi/k}  \ar[r] \ar[d]^{F^*}  &\gamma_*\gamma^* \sT_{ \chi/k}    \ar[d]^{F^*}   \ar[r]& \oplus_{i=1}^\delta
k_{P_i}   \ar[r]  \ar[d]^{(\cdot)^p} &  0\\
0  \ar[r] &F^*\sT_{\chi/k}   \ar[r]   \ar[d]^{\check{s}^2} & \gamma_*\gamma^* F^* \sT_{ \chi/k}   \ar[d]^{\check{s}^2}   \ar[r] & \oplus_{i=1}^\delta
k_{P_i} \ar[d]^{\cong}  \ar[r]  &  0\\
0   \ar[r] &    \sT_{\chi/k}  \ar[r]  &\gamma_*\gamma^* \sT_{ \chi/k}
\ar[r]&  \oplus_{i=1}^\delta
k_{P_i}     \ar[r] &  0. }\]
After taking cohomologies, we get
\[\xymatrix{
 \oplus_{i=1}^\delta
k_{P_i}   \ar[rr]^-{\cong}  \ar[d]^{(\cdot)^p} & &  H^1( \sT_{\chi/k})  \ar[d]^{F^*}  \\
   \oplus_{i=1}^\delta
k_{P_i}\ar[d]^{\cong}  \ar[rr]^-{\hookrightarrow} & & H^1(F^*\sT_{\chi/k})     \ar[d]^{\check{s}^2}   \\
  \oplus_{i=1}^\delta
k_{P_i}  \ar[rr]^-{\cong } & &   H^1(\sT_{\chi/k})
}\]
Clearly (\ref{ordinary3}) is injective. This completes the proof.
\end{proof}

\subsection{Nonemptiness of $A\cap K$ and ordinariness for a generic smooth $(g,r)$-curve}\label{relative case}
Building upon Proposition \ref{AKinter} and Proposition \ref{tdord}, we show the nonemptiness of $A\cap K$ and the existence of ordinary sections for a generic smooth $(g,r)$-curve, by applying a continuity argument.
\begin{proposition}\label{generic nonempty intersection}
For a generic curve $X_1$ in $\mathscr{M}_{g,r}$, $A$ and $K$ intersect.
\end{proposition}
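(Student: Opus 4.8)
The plan is to propagate Proposition~\ref{AKinter} — the fact that $A\cap K$ is a single reduced point for a totally degenerate $(g,r)$-curve — to smooth deformations by a semicontinuity argument applied to the relative families $\sA$, $\sK$ of \S\ref{section on relative A and K}. Fix a totally degenerate $(g,r)$-curve $\chi$, let $B$ be a local log smooth deformation base of $\chi$ inside $\overline{\mathscr{M}}_{g,r}$ with special point $0$, and let $f\colon \mathscr{X}_1\to B$ be the family, so that $f^{-1}(0)\cong\chi$ and the other fibres are smooth $(g,r)$-curves. Carrying out the constructions of \S\ref{section on relative A and K} over $B$ produces closed subvarieties $\sA,\sK\subset\mathcal V:=R^1f_*F^*_{\mathscr{X}_1}\sT_{\mathscr{X}_1/B}$; for $b\neq 0$ the fibres $\sA_b,\sK_b$ recover the $A,K$ attached to $\mathscr{X}_{1,b}$ up to the innocuous twist of Remark~\ref{twist}, and $\sA_0\cap\sK_0$ is the single point produced by Proposition~\ref{AKinter}.

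Since $\sA$ is an affine subbundle and $\sK$ a relative cone, neither is proper over $B$; I therefore pass to the projectivized picture, exactly as in the discussion leading to \eqref{eq007} and Proposition~\ref{prop20}. Inside the projective bundle $\mathbb P(\mathcal V)\to B$ we have the closed subvarieties $\mathbb P(\sW_F)\subset\mathbb P(\sB)$ and $\mathbb P(\sK)$, and fibrewise $p(\sA_b)=\mathbb P(\sB_b)\setminus\mathbb P(\sW_{F,b})$, so that $\sA_b\cap\sK_b\neq\emptyset$ if and only if $Z_b\not\subseteq\mathbb P(\sW_{F,b})$, where $Z:=\mathbb P(\sB)\cap\mathbb P(\sK)$. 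Both $Z$ and $Z':=Z\cap\mathbb P(\sW_F)$ are \emph{proper} over $B$. The fibrewise form of the dimension identity behind \eqref{eq07}--\eqref{eq007} holds at every point of $B$, including $0$ by Proposition~\ref{dg_of_A}, so $Z\to B$ has nonempty fibres; and the standard lower bound for intersections in the smooth scheme $\mathbb P(\mathcal V)$ shows every irreducible component of $Z$ has dimension $\ge\dim B$.

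The core step is to show the locus $\{b\in B:Z_b\not\subseteq\mathbb P(\sW_{F,b})\}$ contains a neighbourhood of $0$. By Proposition~\ref{AKinter}, $Z_0\setminus Z'_0$ is a single point $\xi_1$; let $\widetilde Z$ be the component of $Z$ through $\xi_1$. Then $\widetilde Z\not\subseteq\mathbb P(\sW_F)$, so $\widetilde Z\setminus Z'$ is dense open in $\widetilde Z$ and has dimension $\ge\dim B$, while $\widetilde Z\setminus Z'$ meets the fibre over $0$ only in $\xi_1$; combined with $\dim\widetilde Z\ge\dim B$ and properness of $Z\to B$, one shows that $\widetilde Z$ must dominate $B$ (it cannot be trapped over a proper subvariety of $B$). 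Hence the image of $Z\setminus Z'\to B$ is a constructible set containing the generic point of $B$, so it contains a dense open subset of $B$; by Remark~\ref{twist} this gives $A\cap K\neq\emptyset$ for every $[X_1]$ in a nonempty Zariski-open subset of $\mathscr{M}_{g,r}$, which is the assertion.

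I expect the genuinely delicate input to be the relative dimension bookkeeping that underlies the third paragraph: one must control the irreducible components of the relativized intersection $Z$ — in particular rule out a component through $\xi_1$ that collapses over a proper subvariety of $B$ — and this rests on the rank computations of Proposition~\ref{dg_of_A}, the identity \eqref{eq07} (cf. Propositions~\ref{affine} and~\ref{cone}), and the flatness of the relative Picard and cone constructions of \S\ref{consec}. Granting this, the properness and upper-semicontinuity arguments, and the translation between $\sA\cap\sK$ and $\mathbb P(\sB)\cap\mathbb P(\sK)$, are formal.
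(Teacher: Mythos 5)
Your first step reproduces the paper's argument: the same relative $\sA$, $\sK$ over a one-parameter smoothing $B$ of a totally degenerate curve, the same lower bound $\dim\sA+\dim\sK-\dim\sV=\dim B$ on the components of the intersection coming from \eqref{eq07}, and the same use of Proposition \ref{AKinter} to force the component through $\xi_1$ to dominate $B$. The only genuine difference is that you projectivize first; this is harmless (your observation that $\xi_1$ is isolated in $Z_0$, because $Z_0\setminus Z_0'$ is a single point and open in $Z_0$, is exactly what rescues the domination argument there), but it is also unnecessary: properness plays no role, since constructibility of the image of an irreducible variety dominating $B$ already yields a dense open subset of $B$. The paper argues directly with $\sA\cap\sK$ in the affine total space, where the fibre over $0$ is literally the single point $\xi_1$.

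The gap is in your last sentence. Your base $B$ is forced to be one-dimensional: the family of \S\ref{section on relative A and K} that you invoke is a log smooth affine \emph{curve}, and in any case a base all of whose nonzero fibres are smooth meets the boundary divisor of $\overline{\mathscr{M}}_{g,r}$ only at $0$, which pins $\dim B=1$. What your argument therefore produces is a dense open subset of a one-dimensional family of smooth curves on which $A\cap K\neq\emptyset$; for $3g-3+r>1$ this locus is nowhere dense in $\mathscr{M}_{g,r}$ and does not certify the statement for a generic curve. The paper closes exactly this gap with a second pass: having found a single smooth curve $b_1$ with $A\cap K\neq\emptyset$, it reruns the relative construction over a versal, $(3g-3+r)$-dimensional deformation of $X_{1,b_1}$ --- where every fibre is smooth, so the constructions of \S\ref{consec} globalize with no degeneration issues --- and applies the same dimension count there to conclude that the constructible locus $\{b:\sA_b\cap\sK_b\neq\emptyset\}$ contains a Zariski open subset of $\mathscr{M}_{g,r}$. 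You need this second step, or some other device for spreading the conclusion from a curve in moduli to an open subset of moduli; as written, the proposal stops one dimension short.
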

\begin{proof}
Take an arbitrary t.d. curve $\chi\in \overline{\mathscr{M}}_{g,r}$ and a log smooth deformation $f: \mathscr{X}_1\to B_1$ of $\chi$ as given in \S\ref{section on relative A and K}. Write $\sV$ for the vector bundle $R^1f_*(F^*\sT_{\mathscr{X}_1/B_1})$ over $B_1$. Consider the irreducible closed subsets $\sA$ and  $\sK$ and the composite
$$
\pi: \sA\cap \sK\subset \sV\to B_1.
$$
We know that: (i). $\dim \sA+\dim \sK=\dim \sV+\dim B_1$ which can be seen from Formula (\ref{eq07}). This equality implies that any irreducible component of $\sA\cap \sK$ is of dimension $\geq \dim \sA+\dim \sK-\dim \sV=\dim B_1=1$; (ii). By Proposition \ref{AKinter}, $\xi_1= \pi^{-1}(0)$. Take any irreducible component $Z$ of $\sA\cap \sK$ passing through $\xi_1$ and consider the restriction of $\pi$ to $Z$: $\pi_{Z}: Z\to B_1$. Since the fiber of $\pi_Z$ over $0$ is zero dimensional, the morphism $\pi_Z$ has to be \emph{dominant}, and moreover, its relative dimension is zero (i.e. $Z$ is a \emph{curve}). Thus, there is a nonempty open subset of $B_1$ such that the fiber of $\pi_Z$ over a closed point in which consists of finitely many points. Shrinking this open subset if necessary, we shall obtain an open subset $V$ of $B_1$ such that the fiber of $\sA\cap \sK$ over a closed point in which has an isolated point as irreducible component.\\

Let $0\neq b_1\in B_1$ be a closed point in $V$, and let $X_1$ be the fiber of $f$ over $b_1$, which is a smooth $(g,r)$-curve. Let $B_2$ be an \emph{open} affine neighborhood of $b_1$ in $\mathscr{M}_{g,r}$ (not the compactification!) such that a local universal family $g: \mathscr{X}_2\to B_2$ exists. For $g$, we repeat the constructions of two closed subsets $\sA$ and $\sK$ inside the vector bundle $\sV:=R^1g_*F^*_{\mathscr{X}_2}\sT_{\mathscr{X}_2/B_2}$ as given in \S4.1. Take an isolated point $\xi_2$ in $A\cap K$, which is the fiber of $\sA\cap \sK$ over $b_1$, and an irreducible component $W$ of $\sA\cap \sK$ passing through $\xi_2$. Consider the composite of natural morphisms
$$
\pi_{W}: W\subset \sA\cap \sK\subset \sV\to B_2,
$$
which are defined similarly as above. Again, by Formula (\ref{eq07}), it follows that $\dim W\geq \dim \sA+\dim \sK-\dim \sV=\dim B_2$. Since the fiber of the map $\pi_{W}$ over $b_1$ is zero dimensional, one concludes again that $\pi_{W}$ is \emph{dominant} (and there is a nonempty open subset of $B_2$ over which closed fibers of $\pi_W$ are of zero dimensional). Therefore, there exists a nonempty Zariski open subset $U$ of $\mathscr{M}_{g,r}$ such that $A\cap K\neq \emptyset$ over any closed point in $U$. This completes the proof.
\end{proof}

We are now ready to supply a proof for Theorem \ref{thm03}.
\begin{proof}[Proof of Theorem \ref{thm03}]
This theorem follows from Proposition \ref{prop20} and Proposition \ref{generic nonempty intersection}.
\end{proof}

Finally, we proceed to show the existence of ordinary sections for a generic curve. First, we make the following

\begin{definition}\label{ordinary curve}
A smooth $(g,r)$-curve $X_1$ is called ordinary if $A\cap K\neq \emptyset$ and moreover if there exists an ordinary section $s
\in \text{Hom}(L_1\otimes \mathscr{L}, F^*L_1)$ such that  $A\cap
K \cap \mathrm{Ker}(\phi_s) \neq \emptyset$.
\end{definition}

\begin{proposition}\label{ordinary}
A generic curve $X_1$ in $\mathscr{M}_{g,r}$ is
ordinary.
\end{proposition}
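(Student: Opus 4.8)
The plan is to run the degeneration/continuity argument of Proposition \ref{generic nonempty intersection} one more time, but now keeping track of the section $s$ as well, starting from a totally degenerate curve, where Proposition \ref{tdord} has already shown that the unique point of $A\cap K$ is cut out by an ordinary section.

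First I would relativise the ordinary condition. Over a log smooth deformation $f:\mathscr{X}_1\to B_1$ of a t.d. $(g,r)$-curve $\chi$ as in \S\ref{section on relative A and K}, recall the vector bundle $\sV=R^1f_*F^*\omega_{\mathscr{X}_1/B_1}^{-1}$, the closed subvarieties $\sA,\sK\subset\sV$, the relative projective bundle $q:\mathbb{P}\to\mathrm{Jac}(\mathscr{X}_1/B_1)\to B_1$ parametrising pairs $([\mathscr{L}],[s])$, and the relative kernel $\mathrm{Ker}(\phi)$ with its proper map $\pi_1$ onto $\sK$. Formed fibrewise, the composite \eqref{ordinary1} is a morphism between two vector bundles over $\mathbb{P}$ of the \emph{same} rank $3g-3+r$ --- both are $H^1$ of a line bundle of degree $-(2g-2+r)<0$, so $h^0=0$ and Riemann--Roch gives the common rank --- hence its injectivity locus $\mathbb{P}^{\mathrm{ord}}\subset\mathbb{P}$ is Zariski open. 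Set $\mathscr{Z}:=\pi_1^{-1}(\sA\cap\sK)\subset\mathrm{Ker}(\phi)$, a closed subvariety, together with its two maps $q:\mathscr{Z}\to\mathbb{P}$ and $\mathscr{Z}\to B_1$.

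Next I would study the central fibre and then spread out. Over $0\in B_1$, Proposition \ref{AKinter} gives $(\sA\cap\sK)_0=\{\xi_1\}$ together with a \emph{unique} invertible subsheaf of the corresponding bundle; hence there is a unique lift $z_1\in\mathrm{Ker}(\phi)_0$ of $\xi_1$, so $\mathscr{Z}_0=\{z_1\}$, and the same uniqueness keeps $\xi_1$ off the ``bad locus'' of Lemma \ref{nointer}. Moreover Proposition \ref{tdord}, interpreted through the twist identification of Remark \ref{twist}, says exactly that $q(z_1)\in\mathbb{P}^{\mathrm{ord}}$. Now pick an irreducible component $Z$ of $\mathscr{Z}$ through $z_1$. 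Using the dimension count \eqref{eq07} (which forces every component of $\sA\cap\sK$ to have dimension $\ge 1$) together with $\mathscr{Z}_0=\{z_1\}$ and the generic injectivity of $\pi_1$ near $z_1$, one argues exactly as in Proposition \ref{generic nonempty intersection} that $Z\to B_1$ is dominant. Since $q^{-1}(\mathbb{P}^{\mathrm{ord}})\cap Z$ is a nonempty (it contains $z_1$) open, hence dense, subset of the irreducible variety $Z$, its image in $B_1$ is dense and constructible, so it contains a dense open $U_1\subset B_1$; for $b\in U_1$, a point of $q^{-1}(\mathbb{P}^{\mathrm{ord}})\cap Z$ lying over $b$ provides $\xi\in A\cap K\cap\mathrm{Ker}(\phi_s)$ with $s$ ordinary, so $X_{1,b}$ is ordinary.

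Finally, having produced one ordinary smooth $(g,r)$-curve, I would run the same relative construction over a versal deformation of its moduli point inside $\mathscr{M}_{g,r}$ and repeat the argument to conclude that the locus of ordinary curves contains a nonempty Zariski-open subset of $\mathscr{M}_{g,r}$, that is, a generic curve is ordinary. The main obstacle is exactly the dominance of $Z$ over the base: it rests on combining the equidimensionality bound \eqref{eq07}, the uniqueness of Proposition \ref{AKinter} (which both collapses $\mathscr{Z}_0$ to the single point $z_1$ and keeps $\xi_1$ off the bad locus of Lemma \ref{nointer}, so that $\pi_1$ is generically injective there), and Proposition \ref{tdord} (which lands $z_1$ in the open $\mathbb{P}^{\mathrm{ord}}$); the openness of $\mathbb{P}^{\mathrm{ord}}$ and the final passage from $B_1$ to $\mathscr{M}_{g,r}$ are routine.
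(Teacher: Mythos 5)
Your argument is correct and is essentially the paper's own proof: the paper likewise isolates the ordinary condition as an open locus (defining $K^0=\bigcup_{s\ \mathrm{ordinary}}\mathrm{Ker}(\phi_s)$, open in $K$ by \eqref{eq05}, rather than your $\mathbb{P}^{\mathrm{ord}}$ upstairs in $\mathbb{P}$), relativises it to an open subscheme $\sK^0\subset\sK$, uses Proposition \ref{tdord} to see that $\sA\cap\sK^0$ is a nonempty open subset of $\sA\cap\sK$ containing the point over the totally degenerate curve, and then reruns the continuity argument of Proposition \ref{generic nonempty intersection} with $\sA\cap\sK^0$ in place of $\sA\cap\sK$, finishing with the same versal-deformation step. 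Whether the openness is recorded on $K$ itself or on the parameter space $\mathbb{P}$ of sections is purely presentational.
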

\begin{proof}
By definition of ordinariness, the set of ordinary sections makes an open subset of $\text{Hom}(L_1\otimes \mathscr{L}, F^*L_1)$. Thus for every smooth $(g,r)$-curve, the subset $K^0:=\cup_{s \text{
ordinary}}\mathrm{Ker}(\phi_s)$ is an open subset of $K$ in view of
\eqref{eq05}. By extending to the relative case, we obtain an open
subscheme $\sK^0\subset \sK$. By Proposition \ref{tdord}, $\sA\cap \sK^0$ is a \emph{nonempty} open subset of $\sA\cap \sK$. Thus replacing $\sA\cap \sK$ with $\sA\cap \sK^0$ in the argument of Proposition \ref{generic nonempty intersection} gives the desired result.
\end{proof}

\section{Lifting periodic Higgs bundles to a truncated Witt ring}\label{lifting section}
In the previous two sections, we have basically established the existence of a periodic Higgs-de Rham flow initializing a Higgs bundle with maximal Higgs field (\ref{Higgsbdl}) in characteristic $p$. The purpose of this section is therefore to lift a given periodic Higgs-de Rham flow in characteristic $p$ to the mixed characteristic. This will be done inductively. Namely, assuming the existence of a periodic Higgs-de Rham flow over the Witt ring $W_{n-1}$ of length $n-1$, we show it can be lifted to a periodic Higgs-de Rham flow over $W_n$, under the ordinary condition.

Before moving on to the exact arguments via detailed calculations, we shall give an overview of the whole lifting process. To start with, we shall fix an ordinary curve $X_1$, together with an ordinary section $s\in \Hom(L_1\otimes \tL, F^*L_1)$ satisfying $A\cap K\cap {\rm Ker}(\phi_s)\neq \emptyset$ (see Definitions \ref{ordinary curve} and \ref{ordinary section}). Thus we get a morphism $\tilde{s}: L_1\otimes \tL\to H$, whose image is denoted by $Fil_1\subset H$, such that its composite with the natural projection $H\to F^*L_1$ is $s$. As shown in Proposition \ref{prop20}, we get a $W_2$-lifting $X_2$ of the log curve $X_1$ such that we can take an isomorphism (which is unique up to scalar in $k^*$):
$$
\phi_1: Gr_{Fil_1}\circ C^{-1}_{X_1\subset X_2}(E_1,\theta_1)\cong (E_1,\theta_1)\otimes (\tL,0).
$$
Moreover, this naturally makes the Higgs bundle $(E_1,\theta_1)$ two-periodic (see Corollary \ref{cor of thm 03}). Set $(H_1,\nabla_1)=C^{-1}_{X_1\subset X_2}(E_1,\theta_1)$ and let $(E_1,\theta_1,Fil_1,\phi_1)$ denote the two-periodic Higgs-de Rham flow over $X_1/k$ defined as above.\\

Let $(X_n,D_n)/W_n$ be a lifting of the log curve $(X_1,D_1)/k$, and let $\omega_{X_n/W_n}:=\Omega_{X_n}(\log D_n)$ be the sheaf of log differential forms with poles along $D_n$. The following simple lemma will be used later.
\begin{lemma}\label{unique lifting of maximal Higgs field}
Use notation as above. There exists a unique graded Higgs module $(E_n,\theta_n)$ over $X_n$ lifting $(E_1,\theta_1)$ up to isomorphism.
\end{lemma}
\begin{proof}
First of all, there exists a unique lifting $L_n$ of $L_1$ over $X_n$ such that $L_n^{\otimes 2}=\omega_{X_n/W_n}$. Let us show the uniqueness of lifting first, and let $L_n'$ be another such lifting. Then $L_n'=L_n\otimes M_n$ with $M_n$ an invertible $\sO_{X_n}$-module satisfying $M_n^{\otimes 2}=\sO_{X_n}$ and $M_n\otimes k=\sO_{X_1}$. As $p\geq 3$ by our assumption on the characteristic of $k$, $M_n$ defines a finite \'{e}tale covering of $X_n$ over $W_n$ of degree at most two which becomes trivial by reduction modulo $p$. Thus the covering is indeed trivial and $M_n=\sO_{X_n}$, which implies that $L_n'=L_n$. To show the existence, we proceed by induction on $n$. Let $\{\omega_{ij}\}$ be a \v{C}ech-representative of $\omega_{X_n/W_n}$. Modulo $p^{n-1}$, one gets the induced \v{C}ech-representative $\{\bar{\omega}_{ij}\}$ of $\omega_{X_{n-1}/W_{n-1}}$, for which we can assume the equation $l_{ij}^2=\bar{\omega}_{ij}$, where $\{l_{ij}\}$ is a \v{C}ech-representative of the unique lifting $L_{n-1}$ by induction hypothesis. Take an arbitrary lifting $\tilde{L}_{n}$ of $L_{n-1}$ with a \v{C}ech-representative $\tilde{l}_{ij}$ which lifts $l_{ij}$. We are going to solve $\{a_{ij}\}$ satisfying the equation
$$
(\tilde{l}_{ij}+p^{n-1}a_{ij})^2=\omega_{ij}.
$$
Clearly, it leads to the unique solution $a_{ij}=(2\bar{l}_{ij})^{-1}b_{ij}$, where $\bar{l}_{ij}$ is the mod $p$-reduction of $l_{ij}$ and $b_{ij}=\frac{\omega_{ij}-\tilde{l}_{ij}^2}{p^{n-1}}$. One checks furthermore that, by the 2-cocycle condition of $\{\omega_{ij}\}$, $\{\tilde{l}_{ij}+p^{n-1}a_{ij}\}$ satisfies the 2-cocycle condition, which defines a line bundle $L_n$ lifting $L_{n-1}$ and satisfying $L_n^{\otimes 2}=\omega_{X_n/W_n}$ as required. Therefore, we obtain a graded Higgs module $(E_n=L_n\oplus L_n^{-1},\theta_n)$ with
$$
\theta_n: L_n\cong L_n^{-1}\otimes \omega_{X_n/W_n},
$$
whose reduction mod $p$ gives
$$
\bar{\theta}_n: L_1\cong L_1^{-1}\otimes \omega_{X_1/k}.
$$
Note $\bar{\theta}_n$ differs from $\theta_1$ only by a nonzero scalar in $k$. Thus, by adjusting $\theta_n$ with a suitable scalar in $W_{n}$,
we can always require $(E_n,\theta_n)$ lift $(E_1,\theta_1)$. The uniqueness of $(E_n,\theta_n)$ lifting $(E_1,\theta_1)$ is also clear: since $L_n$ is unique, two liftings can only differ on the Higgs field. Fixing the unique lifting $\theta_{n-1}$ (up to isomorphism), the set of liftings $\theta_n$ forms a torsor under $H^0(X_1,\sO_{X_1})=k$ which means that two liftings differ by a scalar of form $\mu:=1+p^{n-1}\lambda\in W_n$. It is clear that $(E_n,\theta_n)$ and $(E_n,\mu\theta_n)$ are isomorphic graded Higgs modules.
\end{proof}

Now we address the problem of lifting the two-periodic Higgs-de Rham flow from $X_1/k$ to $X_2/W_2$. First, there is a unique lifting of the graded Higgs bundle $(E_2,\theta_2)$ over $X_2$ of $(E_1,\theta_1)$ by Lemma \ref{unique lifting of maximal Higgs field}. Second, by choosing \emph{any} $W_3$-lifting $X_3$ of the log curve $X_2/W_2$, we get the inverse Cartier transform
\footnote{For a sketch of the construction of the inverse Cartier transform over a truncated Witt ring, including the log curve case, we refer the reader to paragraphs after the proof of Theorem \ref{thm04} and before the proof of Proposition \ref{anti-obstr}.}
$C^{-1}_{X_2\subset X_3}(E_2,\theta_2)$ (see \S5 \cite{LSZ}) which is a flat bundle over $X_2$ whose reduction mod $p$ is $(H_1,\nabla_1)$. Third, given $(H_2,\nabla_2):=C^{-1}_{X_2\subset X_3}(E_2,\theta_2)$ as above, there is an obstruction to lifting $Fil_1\subset H_1$ to a Hodge filtration $Fil_2\subset H_2$. It is clear that the obstruction class lies in $H^1(X_1,\Hom(Fil_1,H_1/Fil_1))$. The upshot is that the natural map (compare the map (\ref{eq03}) and Lemma \ref{observs} (1) on its torsor property), obtained from the previous consideration,
\begin{align}\label{obstr}
\varrho: \{X_2\subset X_{3} \} \to  H^1(X_1,\Hom (Fil_1,H_1/Fil_1))
\end{align}
from the set of isomorphism classes of $W_{3}$-liftings of the log curve $X_2$ over $W_2$ to the obstruction space is indeed a \emph{torsor} map under $H^1(X_1,\sT_{X_1/k})$ (see Proposition \ref{anti-obstr}). As a consequence, the ordinary condition on the section $s$ guarantees the existence of a lifting $X_3$ (which actually turns out to be unique) such that one is able to lift $Fil_1$ to a Hodge filtration $Fil_2\subset H_2=C^{-1}_{X_2\subset X_3}(E_2,\theta_2)$. Once this is established, the remaining issues for obtaining a two-periodic Higgs-de Rham flow $(E_2,\theta_2,Fil_2,\phi_2)$ over $X_2/W_2$ which lifts the original one over $X_1/k$ become straightforward. Then we continue the lifting from $W_2$ to $W_3$, and the previous arguments apply. Assuming the existence of a two-periodic Higgs-de Rham flow $(E_{n-1},\theta_{n-1},Fil_{n-1},\phi_{n-1})$ over $X_{n-1}$, we can formulate the generalization of the map (\ref{obstr}) for $n\geq 2$ in the same way:
\begin{align}\label{obstr_gen}
\varrho: \{X_{n}\subset X_{n+1} \} \to  H^1(X_1,\Hom (Fil_1,H_1/Fil_1)).
\end{align}
Before showing the torsor property of $\varrho$, we shall first formulate a sheaf map. Let us write the Higgs field $\theta_1$ of $E_1$ in the following way:
$$
\theta_1: \sT_{X_1/k}\to \End(E_1).
$$
Note that the integrability of the Higgs field means precisely that $\theta_1$ is a morphism of Higgs bundles $(\sT_{X_1/k},0)\to (\End(E_1),\End(\theta_1))$. Applying the inverse Cartier transform to $\theta_1$, we get the corresponding morphism (of flat bundles)
$$
C^{-1}_1(\theta_1):=C^{-1}_{X_1\subset X_2}(\theta_1): F^*\sT_{X_1/k}\to \End(H_1).
$$
Also, there is the obvious projection map
$$
Pr: \End(H_1)\to \Hom(Fil_1,H_1/Fil_1).
$$
So we can consider the composite of the following morphisms of sheaves of abelian groups over $X_1/k$:
$$
\psi: \sT_{X_1/k}\stackrel{F^*}{\longrightarrow}  F^*\sT_{X_1/k}\stackrel{C^{-1}_1(\theta_1)}{\longrightarrow} \End(H_1)\stackrel{Pr}{\longrightarrow} \Hom(Fil_1,H_1/Fil_1),
$$
and its induced map (still denoted by $\psi$ by abuse of notation) on the first cohomology groups:
$$
\psi: H^1(X_1,\sT_{X_1/k})\to H^1(X_1,\Hom(Fil_1,H_1/Fil_1)).
$$
\begin{proposition}\label{anti-obstr}
Use notation as above. Then for $\tau\in \{X_n\subset X_{n+1}\}$ and $\nu\in H^1(X_1,\sT_{X_1/k})$, it holds that
\begin{align}\label{eq501}
\varrho(\tau+\nu)=\varrho(\tau)-\psi(\nu).
\end{align}
\end{proposition}
Let us postpone the proof of this until after showing Theorem \ref{thm04}, which is a direct consequence of the proposition and the next two simple lemmas.
\begin{lemma}\label{commutativity for ordinary}
Suppose the section $s\in \Hom(L_1\otimes \sL,F^*L_1)$ satisfies $A\cap K\cap {\rm Ker}(\phi_s)\neq \emptyset$. Then the following diagram commutes:
\begin{diagram}
 \sT_{X_1/k} &\rTo^{\psi} &\Hom(Fil_1,H_1/Fil_1)\\
     \dTo^{\theta_1}_{\cong}    &              &\dTo_{\cong}\\
           L_1^{-2} &\rTo^{\check{s}^2\circ F^*} &L_1^{-2},\\
 \end{diagram}
where $\check{s}^2: F^*L_1^{-2}\to L_1^{-2}$ is induced by $s$ and the right vertical isomorphism is the obvious one.
\end{lemma}
\begin{proof}
It suffices to show the following diagram commutes:
\begin{diagram}
 F^*\omega_{X_1/k} &\lTo^{\psi^*} &[\Hom(Fil_1,H_1/Fil_1)]^*\\
     \uTo^{F^*\theta^*_1}_{\cong}    &              &\uTo_{\cong}\\
           F^*L_1^{2} &\lTo^{s^2} &L_1^{2}.\\
\end{diagram}
To show this, we first make the following identifications
$$\End(H_1)\cong H_1^{*}\otimes H_1\cong H_1\otimes H_1,$$ where the latter isomorphism is because $H_1\cong H_1^*\otimes \det(H_1)\cong H_1^*$. Under this identification, the dual of the natural projection $Pr$ becomes the inclusion $$Fil_1\otimes Fil_1=L_1^2\otimes \tL^2=L_1^2\hookrightarrow H_1^{\otimes 2}.$$ On the other hand, the composite
$$
H_1\otimes H_1\stackrel{[C^{-1}_{1}(\theta_1)]^*}{\longrightarrow}F^*\omega_{X_1/k}\stackrel{F^*{\theta_1}}{\longrightarrow}
F^*L_1^2
$$
is the self tensor product of the natural projection $H_1\to F^*L_1$. Therefore, the commutativity of the diagram follows from the fact that the Hodge filtration $Fil_1\subset H_1$ is defined by lifting the morphism $s$, that is, the following diagram commutes:
$$
\begin{xy}
   \xymatrix{
      Fil_1\ar[r]^{\hookrightarrow}  & H_1 \ar[r]&F^*L_1  \\
      L_1\otimes \tL  \ar[u]^{\cong}\ar[r]_{=} & L_1\otimes \sL \ar[u]_{\tilde s}  \ar[ur]_{s}&
      }
\end{xy}
$$
\end{proof}
We postpone the proof of the next lemma to the end of this section.
\begin{lemma}\label{inverse cartier transform of torsion bundle}
Let $\tL_{1}=\tL$ be the two torsion line bundle over $X_{1}$ given above. For any sequence of $W_n$-liftings $X_1\subset X_2\subset \cdots\subset X_n\subset \cdots$, let $\tL_n$ be the two torsion line bundle over $X_n$ which is the unique lifting of $\tL_1$. Then one gets inductively that
$$
C_{X_n\subset X_{n+1}}^{-1}(\tL_n,0)=(\tL_{n},\nabla_n),\quad n\geq 2,
$$
where $\nabla_n$ is an integrable connection on $\tL_n$.
\end{lemma}

\begin{proof}[Proof of Theorem \ref{thm04}]
We use induction on $n\geq 2$ and assume we have already lifted the two-periodic Higgs-de Rham flow $(E_1,\theta_1,Fil_1,\phi_1)$ over $X_1/k$ to a two-periodic Higgs-de Rham flow $(E_{n-1},\theta_{n-1},Fil_{n-1},\phi_{n-1})$ over $X_{n-1}/W_{n-1}$ with respect to the $W_n$-lifting $X_{n}$ of the log curve $X_{n-1}$. By Lemma \ref{commutativity for ordinary}, we know the semilinear map $\psi: H^1(X_1,\sT_{X_1/k}\cong L_1^2)\to H^1(X_1,\Hom(Fil_1,H_1/Fil_1)\cong L_1^2)$ is identified with morphism (\ref{ordinary1}) in Definition \ref{ordinary section}. Thus, because of the ordinary assumption on $s$, Proposition \ref{anti-obstr} shows that there exists a unique $W_{n+1}$-lifting $X_{n+1}$ of the log curve $X_n/W_n$ such that the Hodge filtration $Fil_{n-1}$ lifts to a Hodge filtration $Fil_n$ on the flat bundle $C^{-1}_{X_{n}\subset X_{n+1}}(E_{n},\theta_n)$, where $(E_n,\theta_n)$ is the unique lifting of $(E_1,\theta_1)$ to $X_n$. Also, let $\tL_{n}$ be the unique two-torsion line bundle over $X_n$ lifting $\tL_1:=\tL$ over $X_1$.
Note $Gr_{Fil_n}\circ C^{-1}_{X_{n}\subset X_{n+1}}(E_{n},\theta_n)$ and $(E_{n},\theta_n)\otimes (\tL_n,0)$ are two graded Higgs modules over $X_n$ lifting $(E_{1},\theta_{1})\otimes (\tL_{1},0)$ which is also of maximal Higgs field. By the uniqueness in the Lemma \ref{unique lifting of maximal Higgs field} and its proof, there is an isomorphism
$$
\tilde \phi_n: Gr_{Fil_n}\circ C^{-1}_{X_{n}\subset X_{n+1}}(E_{n},\theta_n)\cong (E_{n},\theta_n)\otimes (\tL_n,0)
$$
whose mod $p^{n-2}$ reduction has the form $\mu\phi_{n-1}$ for some $\mu\in W^{\times}_{n-1}$. Take any lift $\tilde \mu\in W^{\times}_n$ of $\mu$ and set $\phi_n=(\tilde\mu)^{-1}\tilde \phi_n$. Then $\phi_n$ lifts $\phi_{n-1}$. Finally, using Lemma \ref{inverse cartier transform of torsion bundle} and the argument in the proof of Corollary \ref{cor of thm 03}, we obtain a two-periodic Higgs-de Rham flow $(E_n,\theta_n,Fil_n,\phi_n)$ over $X_n/W_n$ with respect to $W_{n+1}$-lifting $X_{n+1}$ of $X_n$, which lifts the starting one over $X_{n-1}$. This completes the induction step and therefore the proof of the theorem.
\end{proof}
In the next several paragraphs, we shall digress into a brief account of the construction of the inverse Cartier transform $C^{-1}_{X_n\subset X_{n+1}}(E_n,\theta_n)$ following the theory in \S5 \cite{LSZ}, which may help the reader to understand the local calculations in the proof of Proposition \ref{anti-obstr}. By doing so, we can also lay out some notation used in the proof. First, we give the following:

\begin{definition}\label{log Frobenius lifting over W_n}
Let $X_n, n\geq 2$ be a smooth scheme over $W_n$ with a reduced normal crossing divisor $D_n\subset X_n$ relative to $W_n$ (that is, each component of $D_n$ is flat over $W_n$). A log Frobenius lifting $F_n: X_n\to X_n$ with respect to $D_n$ is a morphism on $X_n$ satisfying (i) its reduction modulo $p$ is the absolute Frobenius on $X_1$; (ii) its restriction to $W_n\subset \sO_{X_n}$ is the Frobenius automorphism of $W_n$; (iii) $F_n^*\sO(-D_n)=\sO(-pD_n)$.
\end{definition}
Given a log Frobenius lifting $F_n$ as above, we get the $\sO_{X_n}$-morphism
$$
dF_n: F_n^*\omega_{X_n/W_n}\to p\omega_{X_n/W_n}\subset \omega_{X_n/W_n},
$$
and hence the Hasse-Witt map over $W_{n-1}$, that is the $\sO_{X_{n-1}}$-morphism
$$
\frac{dF_n}{p}: F_{n-1}^*\omega_{X_{n-1}/W_{n-1}}\to \omega_{X_{n-1}/W_{n-1}}.
$$

Recall that $(E_{n-1},\theta_{n-1},Fil_{n-1}, \phi_{n-1} )$ is a two-periodic Higgs de-Rham flow over $X_{n-1}$ lifting $(E_1,\theta_1,Fil_1,\phi_1)$ over $X_1$. More explicitly,
$$
\phi_{n-1}: Gr_{Fil_{n-1}}\circ C^{-1}_{X_{n-1}\subset X_n}(E_{n-1},\theta_{n-1})\cong (E_{n-1},\theta_{n-1})\otimes (\tL_{n-1},0)
$$
is an isomorphism of graded Higgs bundles over $X_{n-1}$ lifting $\phi_1$,
and it induces the isomorphism $\psi_{n-1}:=\phi_{n-1}\circ Gr_{Fil_{n-1}\otimes Fil_{tr}}\circ C_{X_{n-1}\subset X_n}^{-1}(\phi_{n-1})$
which reads
$$
\psi_{n-1}: Gr_{Fil_{n-1}\otimes Fil_{tr}}\circ C_{X_{n-1}\subset X_n}^{-1}\circ Gr_{Fil_{n-1}}\circ C^{-1}_{X_{n-1}\subset X_n}(E_{n-1},\theta_{n-1})\cong (E_{n-1},\theta_{n-1}),
$$
where $Fil_{n-1}\otimes Fil_{tr}$ is the tensor filtration on $C^{-1}_{X_{n-1}\subset X_n}(E_{n-1},\theta_{n-1})\otimes (\tL_{n-1},\nabla_{n-1})$. \\

{\bf Notation:} Write $(E,\theta)$ for the unique lifting $(E_n,\theta_n)$ of $(E_{n-1},\theta_{n-1})$ to $X_n$. Let $(\bar H,\bar \nabla,\overline{Fil})$ denote the de Rham bundle over $X_{n-1}$:
$$
(\bar H,\bar \nabla)=C_{X_{n-1}\subset X_n}^{-1}(E_{n-1},\theta_{n-1})\otimes (\tL_{n-1},\nabla_{n-1}),\quad \overline{Fil}=Fil_{n-1}\otimes Fil_{tr}.
$$
By the above discussion, $\phi_{n-1}$ induces an isomorphism
$$
\bar \psi: Gr_{\overline{Fil}}(\bar H,\bar \nabla)\cong (\bar E,\bar \theta):=(E_{n-1},\theta_{n-1})
$$
The tuple $(E,\theta,\bar H,\bar \nabla,\overline{Fil},\bar \psi)$ is therefore an object in the category $\sH(X_n)$ (\S5 \cite{LSZ}). Fix an open affine covering $\sU''=\{U''_{i}\}_{i\in I}$ for $X_{n+1}$ such that the $\sO_{U_i''}$-module $\omega_{X_{n+1}/W_{n+1}}(U''_i)$ is free of rank one, which induces the corresponding open affine covering $\sU'=\{U'_i\}$ (resp. $\sU=\{U_i\}$) for $X_n$ (resp. $X_1$) by reduction modulo $p^{n}$ (resp. $p$). Define $U''_{ij}=U''_i\cap U''_j$ and similarly define $U'_{ij}$ and $U_{ij}$. Following a suggestion of the referee, we shall assume in the following that the covering $\sU''$ of $X_{n+1}$ is so chosen that $U''_{ij}\cap D_{n+1}=\emptyset$ for all $i,j$. Certainly, this can be done, since one can take for example $U''_i=X''_{n+1}-\sum_{j\neq i}D_{n+1,i}$ where the boundary divisor $D_{n+1}=\sum D_{n+1,i}$ is written into the summation of disjoint $W_{n+1}$-rational points of $X_{n+1}$. By doing so, we are saved from carrying out extra calculations in the log setting in the proofs of Proposition \ref{anti-obstr} and Lemma \ref{cohclass} and in the verification of the 2-cocycle condition of the Taylor formula. For each $i\in I$, fix a log Frobenius lifting $F''_{i}: U''_i\to U''_i$ whose restriction to $U'_i$ is denoted by $F'_i$. Without loss of generality, we assume that $F''_{i}$ restricts to a morphism on the overlap $U''_{ij}$ for each $i$. Also, we can choose a local coordinate $t''_{ij}$ for $U''_{ij}$ such that $D_{n+1}$ is locally defined by $t''_{ij}$, whose mod $p^{n}$ (resp. mod $p$) reduction will be denoted by $t'_{ij}$ (resp. $t_{ij}$). Set
$$
z_{ij}=\frac{F''^*_i(t''_{ij})-F''^*_j(t''_{ij})}{p}\in \sO_{U'_{ij}}.
$$
The inverse Cartier transform $C_n^{-1}=C^{-1}_{X_n\subset X_{n+1}}$ is the composite of two functors\footnote{The notation differs slightly from \S5 \cite{LSZ}. To simplify the notation, we have not bothered to make the obvious base change of the studied objects over $X'_n=X_n\times_{\sigma}W_n$.}
$$
\sH(X_n)\stackrel{\sT_n}{\to}\widetilde{MIC}(X_n)\stackrel{\sF_n}{\to} MIC(X_n).
$$
The precise definition of each category shall not be recalled here. But since we face a log situation which has not been considered in \cite{LSZ}, we need to point out the necessary modifications in the construction. Roughly speaking, the first functor $\sT_n$ constructs a $\sO_{X_n}$-module with a quasi-nilpotent integrable $p$-connection $(\tilde H, \tilde \nabla)$ from the given tuple $(E,\theta,\bar H,\bar \nabla,\bar \psi)$ in $\sH(X_n)$. The explicit formulas in the second approach to the construction of $\sT_n$ show clearly that it works verbatim in the log situation. The modification occurs in the second functor $\sF_n$. Write $\tilde H_i$ for the restriction $\tilde H|_{U'_{i}}$ and similarly define $\tilde \nabla_i$. Then for each $i$, define a local flat bundle over $U'_i$
$$
H_i:=F^{'*}_i\tilde H_i,\quad \nabla_i: H_i\to H_i\otimes \omega_{U''_{i}/W_n},
$$
where
\begin{align}\label{def of connection}
\nabla_i(f\otimes e):=df\otimes e+f(\frac{dF''_i}{p}\otimes 1)(1\otimes \tilde \nabla_i(e)), \quad f\in \sO_{U'_i}, \ e\in \tilde H_i.
\end{align}
On the overlap $U'_{ij}$, the transition matrix is provided by the Taylor formula. By the assumption that the boundary divisor does not appear in $U'_{ij}$, the usual Taylor formula applies and it reads
$$
G_{ij}(e\otimes 1)=\sum_{n\geq0}\tilde{\nabla}_{\partial_{t'_{ij}}} ^n(e)\otimes \frac{z^n_{ij}}{n!}.
$$
By the well-known 2-cocycle condition of the Taylor formula (also as verified in \S5 \cite{LSZ}), $\{G_{ij}\}$ glues $(H_i,\nabla_i)$ into a global flat bundle $(H,\nabla)$ which is defined to be $C_n^{-1}(E,\theta)$. As a final remark, the functor $\sT_n$ does not depend on the choice of $W_{n+1}$-liftings of $X_n$, but the functor $\sF_n$ does.

Now we can proceed to prove Proposition \ref{anti-obstr}. Let $\hat{X}_{n+1}$ be another $W_{n+1}$-lifting of $X_n$. We put the \emph{hat} on the notations for $X_{n+1}$ to mean the corresponding notations for $\hat X_{n+1}$. For example, $\hat U''_i$ means an open affine subset of $\hat X_{n+1}$. We shall assume that the reductions mod $p^n$ of the corresponding objects for both liftings are the \emph{same}.
\begin{proof}[Proof of Proposition \ref{anti-obstr}]
Use notation as above. Put also
$$
(H',\nabla')=(H,\nabla)\otimes \Z/p^{n-1}, \quad Fil'=Fil_{n-1}.
$$
Caution: The de Rham bundle $(H',\nabla',Fil')$ over $X_{n-1}$ is \emph{not} $(\bar H,\bar \nabla,\overline{Fil})$ (it differs by a twist of $\tL_{n-1}$). For the lifting $X_{n+1}$, let $\{G_{ij}\}$ be the gluing functions for the local flat bundles $\{(H_i,\nabla_i)\}$ to obtain $(H,\nabla)$, provided by the Taylor formula in the functor $\sF_n$. Correspondingly, let $\{\hat G_{ij}\}$ be the gluing functions for $\{(\hat H_i,\hat \nabla_i)\}$ to obtain $(\hat H,\hat \nabla)=C^{-1}_{X_n\subset \hat X_{n+1}}(E,\theta)$ with respect to the lifting $\hat X_{n+1}$. We remind the reader that the bundles with $p$-connection obtained via the functor $\sT_n$ are the same for both liftings so we can identify $\hat H_i$ with $H_i$ for each $i$.

For the fixed lifting $X_{n+1}$, we are going to compute a \v{C}ech-representative of the obstruction class $\varrho(X_n\subset X_{n+1})$ (\ref{obstr_gen}) to lift $Fil'$ to a Hodge filtration $Fil$ (which is a locally direct summand of $H$). We have the following diagram of morphisms:
$$
\begin{xy}
   \xymatrix{
     H_i\ar[dr]_{res}&                  &                        &               &\ar[dl]^{res}H_j\\
                & H_i|_{U''_{ij}}  \ar[rr]^{\stackrel{G_{ij}}{\cong}}&& H_j|_{U''_{ij}}            &
                }
\end{xy}
$$
For each $i\in I$, we choose a local section $e_i$ of $H_i$, satisfying: (i) $e_i\mod p^{n-1}$ is a local basis for $Fil'|_{U'_i}$; (ii) $G_{ij}({\rm res}(e_i))=u'_{ij}{\rm res}(e_j) \mod p^{n-1}$ in $H'|_{U'_{ij}}$ for some unit $u'_{ij}$ (this is possible because $Fil'$ is a global subbundle of $H'$). Therefore, for any lifting $u''_{ij}$ of $u'_{ij}$ to $U''_{ij}$, $G_{ij}({\rm res}(e_i))$ and $u''_{ij}{\rm res}(e_j)$ are two liftings of the same local basis in $Fil'|_{U'_{ij}}$, and $\frac{G_{ij}({\rm res}(e_{i}))-u''_{ij}{\rm res}(e_{j})}{p^{n-1}}$ is then an element in $H_1|_{U_{ij}}$. Recall the projection map $Pr: \End(H_1)\to \Hom(Fil_1,H_1/Fil_1)$ defined above. Regarding $G_{ij}$ as an element in $\End(H|_{U''_{ij}})$, $O_{ij}:=Pr\circ \frac{G_{ij}-u''_{ij}\cdot Id}{p^{n-1}}$ defines a section of $\Hom(Fil_1,H_1/Fil_1)$ over $U_{ij}$. Now we proceed to verify the 2-cocycle condition for $\{O_{ij}\}$. Complete the above local section $e_i$ into a local basis $\{e_i,f_i\}$ of $H_i$ and then represent the transition $G_{ij}$ by the matrix
$\left(
  \begin{array}{cc}
    a_{ij} & b_{ij} \\
    c_{ij} & d_{ij} \\
  \end{array}
\right)$.
The assumption (ii) on $e_i$ implies $c_{ij}=0\mod p^{n-1}$. Let $\bar c_{ij}=\frac{c_{ij}}{p^{n-1}}\in \sO_{U_{ij}}$. For a temporary use, let us put \emph{bar} over a local section to mean its reduction mod $p$, and let $\pi: H_1\to H_1/Fil_1$ be the projection. Then $O_{ij}(\bar e_i|_{U_{ij}})=\bar c_{ij}\pi(\bar f_j|_{U_{ij}})$. One derives the 2-cocyle condition from the 2-cocycle condition satisfied by $\{G_{ij}\}$ as follows: over $U''_{ijk}:=U''_{i}\cap U''_{j}\cap U''_{k}$, one has the equality
$$
\left(
  \begin{array}{cc}
    a_{jk} & b_{jk} \\
    c_{jk} & d_{jk} \\
  \end{array}
\right)\left(
  \begin{array}{cc}
    a_{ij} & b_{ij} \\
    c_{ij} & d_{ij} \\
  \end{array}
\right)=\left(
  \begin{array}{cc}
    a_{ik} & b_{ik} \\
    c_{ik} & d_{ik} \\
  \end{array}
\right),
$$
which gives particularly the following equality in $\sO_{U''_{ijk}}$:
$$a_{ij}c_{jk}+c_{ij}d_{jk}=c_{ik}.$$ Dividing both sides by $p^{n-1}$, one gets $\bar a_{ij}\bar c_{jk}+\bar c_{ij}\bar d_{jk}=\bar c_{ik}$, where $\bar a_{ij}$ (resp. $\bar d_{jk}$) is the mod $p$ reduction of $a_{ij}$ (resp. $d_{jk}$). On the other hand, over $U_{ijk}$,
$$
(O_{ij}+O_{jk})(\bar e_i|_{U_{ijk}})=(\bar c_{ij}\bar d_{jk}+\bar a_{ij}\bar c_{jk})\pi(\bar f_k|_{ijk}),
$$
and $O_{ik}(\bar e_i|_{U_{ijk}})=\bar c_{ik}\pi(\bar f_k|_{U_{ijk}})$. Thus $O_{ij}+O_{jk}=O_{ik}$ holds. Moreover, it is easy to check further that the 1-cocyle $\{O_{ij}\}$ is a coboundary $dh, h=(h_{i})$ iff $c_{ij}$ can be modified to be zero (so that $\{e_i\}$s glue into a sub line bundle $Fil$ of $H$ lifting $Fil'$. So $\{O_{ij}\}$ gives us a representative for the obstruction class lifting the Hodge filtration.

Now we shall measure the difference of two obstruction classes coming from two liftings $\hat X_{n+1}$ and $X_{n+1}$. Recall the function $\hat z_{ij}=\frac{\hat{F}''^*_i(t''_{ij})-\hat{F}''^*_j(t''_{ij})}{p}\in \sO_{U'_{ij}}$ in the Taylor formula $\hat G_{ij}$. Define $\alpha_{ij}=\hat z_{ij}-z_{ij}$. By the assumption on the Frobenius liftings, $\alpha_{ij}$ is divisible by $p^{n-1}$. A direct calculation shows the following relation between the transition matrices $\hat G_{ij}$ and $G_{ij}$:
$$
\hat G_{ij}=W_{ij}\cdot G_{ij},
$$
where for $e\in \tilde H|_{U'_{ij}}$,
$$
W_{ij}(e\otimes 1)=e\otimes1 + \tilde{\nabla}_{\partial t_{ij}'}(e)\otimes \alpha_{ij}.
$$
Thus, the difference between the obstruction classes can be represented by
$$
\{Pr\circ(\frac{\hat{G}_{ij}-u''_{ij}\cdot Id}{p^{n-1}})-Pr\circ(\frac{G_{ij}-u''_{ij}\cdot Id}{p^{n-1}})=
Pr\circ(\frac{W_{ij}-Id}{p^{n-1}})\circ \bar{G}_{ij}\},
$$
where $\bar{G}_{ij}$ denotes the reduction of $G_{ij}$ mod $p$.  As the reduction of $p$-connection $\tilde \nabla$ mod $p$ is just the Higgs field $\theta_1$, it follows that
$$
\frac{W_{ij}-Id}{p^{n-1}}= F^*(\theta_{1,\partial_{ t_{ij}}})\frac{\alpha_{ij}}{p^{n-1}}.
$$
Let $\nu=[\hat X_{n+1}]-[X_{n+1}]\in H^1(X_1,\sT_{X_1/k})$. The next lemma shows that the cycle $\{-\frac{\alpha_{ij}}{p^{n-1}}(\partial_{t_{ij}}\otimes 1)\}$ represents the class $F^*\nu$ in $H^1(X_1,F^*\sT_{X_1/k})$. Thus we get $C_1^{-1}(\theta_1)(F^*\nu)=-C_1^{-1}(\theta_{1,\partial_{t_{ij}}})\frac{\alpha_{ij}}{p^{n-1}}$ over the overlap $U_{ij}$, and now we need a clearer expression of it. For this, we choose a local basis $e_i=\{e_{i,1},e_{i,2}\}$ of $E_1|_{U_i}$ and $e_j=\{e_{j,1},e_{j,2}\}$ of $E_1|_{U_j}$, and let $M_{ij}$ be the transition matrix between the two bases on the overlap $U_{ij}$, that is, $e_j|_{U_{ij}}=e_i|_{U_{ij}}M_{ij}$. Then $e_i\otimes 1$ (resp. $e_j\otimes 1$) gives a local basis for $F^*E_1|_{U_i}$ (resp. $F^*E_1|_{U_j}$) and the corresponding transition matrix (to glue into $F^*E_1$) is $F^*M_{ij}$. By the construction of the inverse Cartier transform, $H_1$ is obtained by gluing $F^*E_1|_{U_i}$ and $F^*E_1|_{U_j}$ (using the above basis) via the gluing matrix $\bar G_{ij}\cdot(F^*M_{ij})$ (the so-called exponential twisting in \cite{LSZ0}), where $\bar G_{ij}=Id+F^*(\theta_{1,\partial_{t_{ij}}})\bar z_{ij}$ with $\bar z_{ij}$ the mod $p$ reduction of $z_{ij}$. Thus, on the overlap, the morphism
$$
F^*E_1|_{U_{ij}}\stackrel{F^*(\theta_{1,\partial t_{ij}})}{\longrightarrow} F^*E_1|_{U_{ij}}
$$
is expressed into $F^*M_{ij}F^*\theta_{1,\partial t_i}$, where we use the restriction of $e_i\otimes 1$ on the source and the restriction of $e_j\otimes 1$ on the target as local basis. On the other hand, the morphism
$$
H_1|_{U_{ij}}\stackrel{C^{-1}_1(\theta_{1,\partial t_{ij}})}{\longrightarrow} H_1|_{U_{ij}}
$$
can be expressed as $\bar G_{ij}F^*M_{ij}F^*\theta_{1,\partial t_i}$ using the same bases as above. Thus, we conclude that
$$
C_1^{-1}(\theta_{1,\partial t_{ij}})=\bar G_{ij}\circ F^*(\theta_{1,\partial_{ t_{ij}}})=F^*(\theta_{1,\partial_{ t_{ij}}})\circ \bar G_{ij},
$$
where the second equality can be seen by the expression of $\bar G_{ij}$. Therefore $\{ Pr\circ\frac{W_{ij}-Id}{p^{n-1}}\circ \bar{G}_{ij}\} $ represents the class
$-Pr\circ C_1^{-1}(\theta_1)\circ F^*(\nu)$. This finishes the proof.
\end{proof}
The proof of the following lemma is analogous to that of Lemma \ref{observs} (1).
\begin{lemma}\label{cohclass}
Use notation as above. Then the image of $\nu=[\hat X_{n+1}]-[X_{n+1}]$ under the map $F^*: H^1(X_1,\mathcal{T}_{X_1/k})\to H^1(X_1, F^*\mathcal{T}_{X_1/k})$ is represented by the class $\{-\frac{\alpha_{ij}}{p^{n-1}}(\partial_{t_{ij}}\otimes 1)\}$.
\end{lemma}
\begin{proof}
Each open affine $U'_i$ (resp. $U'_{ij}$ which is again affine) has a unique $W_{n+1}$-lifting $V''_i$ (resp. $V''_{ij}$) up to isomorphism (we shall suppress the choice of isomorphisms in our argument). Fix embeddings $\{V''_{ij}\to V''_j\}$, and corresponding to the $W_{n+1}$-lifting $\hat X_{n+1}$ (resp. $X_{n+1}$) of $X_n$, there are embeddings $\{\hat{g}_{ij}: V_{ij}''\to
V_i''\}$ (resp. $\{g_{ij}:V_{ij}''\to V_i''\}$).
Note that $\hat g_{ij}$ and $g_{ij}$ have the same
reduction modulo $p^{n}$. Thus $\frac{\hat g_{ij}^*-g_{ij}^*}{p^{n}}: \mathcal{O}_{U_{i}}\to
\mathcal{O}_{U_{ij}}$ is a $k$-derivation and gives rise to a \v{C}ech-representative $\{\nu_{ij}\}$
for $\nu\in H^1(\sT_{X_1/k})$. Now we choose and then fix (log) Frobenius liftings $\{G''_i:V_i''\to V_i''\}_{i\in I}$ which are assumed to induce Frobenius liftings over $V''_{ij}$. Thus we get two composite morphisms
$$
g_{ij}\circ G''_j,\ G''_i\circ g_{ij}: V''_{ij}\to V''_i
$$
which are two (log) Frobenius liftings over $V''_{ij}$. By the identification $U''_{ij}=V''_{ij}$, we can assume $F''_i=G''_i\circ g_{ij}$ and $F''_j=g_{ij}\circ G''_j$ on the overlap. Then we get
$$
z_{ij}=\frac{g_{ij}^*\circ (G''_i)^*(t''_{ij})-(G''_j)^*\circ g_{ij}^*(t''_{ij})}{p}.
$$
Similarly, we obtain
$$
\hat{z}_{ij}=\frac{\hat{g}_{ij}^*\circ
(G''_i)^*(t''_{ij})-(G''_j)^*\circ \hat{g}_{ij}^*(t''_{ij})}{p}.
$$
Because of the mod $p^n$-reduction property of $\hat g_{ij}$ and $g_{ij}$, the difference $\alpha_{ij}=\hat z_{ij}-z_{ij}$ is divisible by $p^{n-1}$, and we get
$$
\frac{\alpha_{ij}}{p^{n-1}}=\frac{\hat z_{ij}-z_{ij}}{p^{n-1}}=\frac{1}{p^{n}}\left((\hat g_{ij}^*-g_{ij}^*)\circ
(G''_i)^*(t''_{ij})-(G''_j)^*\circ
(\hat g_{ij}^*(t''_{ij})-g_{ij}^*(t''_{ij}))\right).
$$
As $\frac{\hat g_{ij}^*-g_{ij}^*}{p^{n}}$ is a derivation and
$d(G''_i)^*(t''_{ij})$ is divisible by $p$,
$\frac{(\hat g_{ij}^*-g_{ij}^*)\circ (G''_i)^*(t''_{ij})}{p^{n}}=0$.
Therefore,
$$
-\frac{\alpha_{ij}}{p^{n-1}}=\frac{(G''_j)^*\circ (\hat g_{ij}^*(t''_{ij})- g_{ij}^*(t''_{ij}))}{p^n}=F^*\circ(\frac{
\hat g_{ij}^*-g_{ij}^*}{p^{n}})(t_{ij}),
$$
which means $\{-\frac{\alpha_{ij}}{p^{n-1}}(\partial t_{ij}\otimes 1)\}$ represents the class
$\{F^*\nu\}\in H^1(X_1,F^*\sT_{X_1/k})$.
\end{proof}

Finally, we shall provide the proof of Lemma \ref{inverse cartier transform of torsion bundle}, which follows directly from the construction of the inverse Cartier transform.
\begin{proof}[Proof of Lemma \ref{inverse cartier transform of torsion bundle}]
Start with $n=1$. Note that $(\tL_{1},0,Fil_{tr},id)$ defines a one-periodic Higgs de-Rham flow over $X_1$. Therefore, the tuple $$(\tL_2,0,\tL_1,\nabla_{can},Fil_{tr},id)$$ is an object in $\sH(X_2)$, where $\nabla_{can}$ is the canonical connection on $F^*\tL_1=\tL_1^p=\tL_1$. Let $(\tilde H_2,\tilde \nabla_2)$ be the corresponding bundle with $p$-connection. According to the second approach in the construction of the functor $\sT_2$ given in \cite{LSZ}, $\tilde H_2$ can be identified with the cokernel of the morphism
$$
\tL_2\to \tL_1\oplus \tL_2, \quad x\mapsto (\bar x, -px),
$$
where $\bar x$ is the reduction of $x$ mod $p$. It factors as
$$
\tL_2\stackrel{\mod p}{\twoheadrightarrow} \tL_1\stackrel{(id,-p)}{\hookrightarrow} \tL_1\oplus \tL_2,
$$
whose image is clearly identified with the kernel of the morphism
$$
\tL_1\oplus \tL_2\stackrel{\left(
                 \begin{array}{c}
                   p \\
                   id \\
                 \end{array}
               \right)
}{\twoheadrightarrow} \tL_2.
$$
Hence $\tilde H_2=\tL_2$. Let $e$ be a local section of $\tL_2$ such that its mod $p$ reduction is a local ($\nabla_{can}$-)flat basis of $\tL_1$. Then $e$ is flat with respect to the $p$-connection $\tilde \nabla_2$ by its definition. Next, by Formula (\ref{def of connection}), it is also clear that $1\otimes e$ is a flat basis of the local flat bundle $(H_2,\nabla_2)=\sF_{2}(\tilde H_2,\tilde \nabla_2)$ restricted to some open affine subset. Suppose the transition function of $\tilde H_2=\tL_2$ is given by $g_{ij}$. Then by the Taylor formula expressed via the local flat bases, we know that the transition function for $H_2$ is given by  $g^p_{ij}$ which simply means $H_2=\tL_2^p=\tL_2$. Then one constructs inductively the one-periodic Higgs-de Rham flow $(\tL_n,0, Fil_{tr},id)$ over $X_{n}/W_n$ (or equivalently the object $(\tL_n,0,\tL_{n-1},\nabla_{n-1},Fil_{tr},id)$ in $\sH(X_n)$) and shows, by local calculations similar to the above, that $C^{-1}_{X_n\subset X_{n+1}}(\tL_n,0)=(\tL_{n},\nabla_{n})$ for $n\geq 2$ and $\tL_n$ admits a local flat basis with respect to $\nabla_n$.
\end{proof}

\section{Appendix: Cartier transform and inverse Cartier transform in a special log case}
Let $k$ be a perfect field of positive characteristic and $X$ a smooth algebraic variety over $k$. Let $D=\sum_iD_i\subset X$ be a simple normal crossing divisor, where $D_i$ is a smooth irreducible component. This gives rise to one of standard examples of the logarithmic structure on $X$ (Example 1.5 (1) \cite{KKa}). The aim of this appendix is to provide the Cartier/inverse Cartier transform of Ogus-Vologodsky \cite{OV} in this special log case. Note the thesis of D. Schepler (see \cite{S}) has treated this issue in a more general log setting. In particular, we believe our main result Theorem \ref{equivalence} is really contained in Corollary 4.11 (iii) in \cite{S}. Thus, the main point of this appendix is to provide the exponential twisting approach (see \cite{LSZ0} for $D=\emptyset$) to the Cartier/inverse Cartier transform of Ogus-Vologodsky and Schepler in this special log setting. Also, the construction of the inverse Cartier transform serves as the basis for its lifting to a truncated Witt ring as sketched in \S 5. We acknowledge our referee for pointing out an error in the original formulation of the category $\MIC^0_{\leq n}(X_{\log}/k)$.

Let $\omega_{X_{\log}/k}=\Omega_X(\log D)$ be the sheaf of log differential forms which is a locally free $\sO_X$-module. Let $\sT_{X_{\log}/k}$ be its $\sO_X$-dual. Let us recall the short exact sequence of the residue map:
$$
0\to \omega_{X/k}\to \omega_{X_{\log}/k}\stackrel{\oplus res_{D_i}}{\longrightarrow} \oplus_i\sO_{D_i}\to 0,
$$
where $\omega_{X/k}$ is the sheaf of regular differential one forms on $X$.  First we formulate two categories: for a nonnegative integer $n$, let $\HIG_{\leq n}(X_{\log}/k)$ be the category of nilpotent logarithmic Higgs sheaves over $X/k$ of exponent $\leq n$. Any object $(E,\theta)\in \HIG_{\leq n}(X_{\log}/k)$ is of form
$$
\theta: E\to E\otimes \omega_{X_{\log}/k}
$$
satisfying $$
\theta_{\partial_1}\cdots\theta_{\partial_{n+1}}=0
$$
for any local sections $\partial_1,\cdots, \partial_{n+1}$ of $\sT_{X_{\log}/k}$\footnote{The convention of exponent adopted here differs from the one used in \cite{LSZ0} which originated from N. Katz \cite{KA}, but conforms with the one used in Ogus-Vologodsky \cite{OV}.}. Let $\HIG^{0}_{\leq n}(X_{\log}/k)$ be the full subcategory of $\HIG_{\leq n}(X_{\log}/k)$ with the tor condition :
 \begin{align}\label{tor for higgs}
 \mathcal{T}or_1(E,F_*\sO_{D_i})= 0, \quad \forall \ i.
 \end{align}
Note that the tor condition is a local condition: $\mathcal{T}or_1(E,F_*\sO_{D_i})= 0$ iff for every closed point $x\in D_i$, $Tor^{\sO_{X,x}}_1(E_x,F_*\sO_{D_{i.x}})= 0$. Note also that the tor condition is obviously satisfied in the case that either
$E$ is locally free or $D$ is simply an empty set. Correspondingly, let $\MIC_{\leq n}(X_{\log}/k)$ be the category of nilpotent logarithmic flat sheaves of exponent $\leq n$ (its object is similarly defined with the nilpotent condition referring to its $p$-curvature) and $\MIC^{0}_{\leq n}(X_{\log}/k)$ the full subcategory of $\MIC_{\leq n}(X_{\log}/k)$ consisting of objects $(H,\nabla)$ satisfying the tor condition
 \begin{align}\label{tor for flat}
 \mathcal{T}or_1(H,\sO_{D_i})= 0, \quad \forall \ i,
 \end{align}
 together with the residue condition that the \emph{residue} of $\nabla$ is nilpotent of exponent $\leq n$ which is explained as follows: recall that for a logarithmic flat (resp. Higgs) sheaf $(H,\nabla)$ (resp. $(E,\theta)$), the residue along $D_i$ is an $\sO_{D_i}$-linear morphism $$\textrm{Res}_{D_i}\nabla: H|_{D_i}:=H\otimes \sO_{D_i} \to H|_{D_i}$$ (resp. $\textrm{Res}_{D_i} \theta: E|_{D_i}\to E|_{D_i}$) obtained from the composite
$$
H\stackrel{\nabla}{\longrightarrow} H\otimes \omega_{X_{\log}/k}\stackrel{id\otimes \textrm{res}_{D_i}}{\longrightarrow} H\otimes \sO_{D_i}.
$$
If for each $i$, $(\textrm{Res}_{D_i} \nabla)^{n+1}=0$ (resp. $(\textrm{Res}_{D_i} \theta)^{n+1}=0$), we call the residue of $\nabla$ (resp. $\theta$) nilpotent of exponent $\leq n$. Note that, for $(E,\theta)\in \HIG_{\leq n}(X_{\log}/k)$, $(\textrm{Res}\ \theta)^{n+1}=0$ holds automatically; but this is not the case for $\MIC_{\leq n}(X_{\log}/k)$.
\begin{theorem}[Corollary 4.11 (iii) \cite{S}; Theorem 1.2 \cite{LSZ0} for $D=\emptyset$]\label{equivalence}
Assume $(X,D)$ is $W_2$-liftable. Then there is an equivalence of categories
$$
\HIG^0_{\leq p-1}(X_{\log}/k)\xrightleftharpoons[\ C\ ]{\ C^{-1}\ }\MIC^0_{\leq p-1}(X_{\log}/k).
$$
\end{theorem}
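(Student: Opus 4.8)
The plan is to adapt the explicit chart-by-chart construction of \cite{LSZ0}, which handles the case $D=\emptyset$, to the present logarithmic situation; the only genuinely new ingredient is the bookkeeping of residues along $D$. Fix a $W_2(k)$-lifting $(X_2,D_2)$ of $(X,D)$, choose a finite affine open cover $X=\bigcup_{i}U_i$ together with opens $\tilde U_i\subset X_2$ lifting $U_i$, and pick on each $\tilde U_i$ a \emph{logarithmic} Frobenius lifting $F_i$, i.e. a lift of the absolute Frobenius of $U_i$ compatible with the log structure, so that along a local branch $t$ of $D$ one has $F_i^{*}(t)=t^{p}$ (up to a unit); such $F_i$ exist on small affines. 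The elementary fact underlying everything is that for such an $F_i$ the differential $dF_i^{*}$ carries log one-forms into $p$ times log one-forms, so that $\tfrac{dF_i^{*}}{p}$ descends to a well-defined $\sO$-linear operator on log differentials mod $p$, exactly as in the smooth case.

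On each chart I would set $C^{-1}_i(E,\theta):=(F^{*}E,\ \nabla_i)$ with $\nabla_i=\nabla_{can}+\tfrac{dF_i^{*}}{p}\circ F^{*}\theta$. As in \cite{LSZ0} one checks that $\nabla_i$ is integrable with nilpotent $p$-curvature. The new point is the residue: writing things in local coordinates where a branch of $D$ is $\{t=0\}$ and $F_i^{*}t=t^{p}$, one has $\tfrac{dF_i^{*}}{p}\big(\tfrac{dt}{t}\big)=\tfrac{dt}{t}$, so $\Res\,\nabla_i$ is the Frobenius-semilinear image of $\Res\,\theta$; since $(E,\theta)\in\HIG_{p-1}(X_{\log}/k)$ forces $(\Res\,\theta)^{p}=0$, it follows that $(\Res\,\nabla_i)^{p}=0$ and hence $C^{-1}_i(E,\theta)\in\MIC^{0}_{p-1}(U_{i,\log}/k)$. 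Conversely, on a chart the $p$-curvature of an object $(H,\nabla)$ of $\MIC^{0}_{p-1}$ produces a Higgs field via the log Cartier isomorphism, and Cartier descent recovers an underlying module, giving a local quasi-inverse $C_i$; the condition $(\Res\,\nabla)^{p}=0$ is precisely what is needed for the output to lie in $\HIG_{p-1}$.

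To globalize, the two local constructions on an overlap $U_{ij}$ attached to $F_i$ and $F_j$ must be canonically identified. Following \cite{LSZ0} I would use the Taylor-type gluing isomorphism
$$
G_{ij}(s\otimes 1)=\sum_{k\ge 0}\frac{1}{k!}\,\big(\nabla(\partial t)^{k}s\big)\otimes z_{ij}^{k},\qquad z_{ij}:=\tfrac{F_i^{*}}{p}(t)-\tfrac{F_j^{*}}{p}(t),
$$
for a local (log) coordinate $t$ on $U_{ij}$; the exponent-$\le p-1$ hypothesis makes the sum finite and the divided powers $\tfrac{1}{k!}$ meaningful in characteristic $p$. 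Horizontality of $G_{ij}$, the cocycle identity for $\{G_{ij}\}$, and independence of the glued object from all choices up to canonical isomorphism are verified as in the smooth case, the only extra observation being that for a branch $t$ of $D$ the element $z_{ij}$ still lies in $\sO_{U_{ij}}$ because $F_i^{*}(t)-F_j^{*}(t)$ is divisible by $p$. Gluing the $C^{-1}_i$ by $\{G_{ij}\}$ produces the global functor $C^{-1}$; gluing the $C_i$ (equivalently, running the construction in reverse) produces $C$, and the local statements that $C_i$ and $C^{-1}_i$ are mutually quasi-inverse propagate to $C$ and $C^{-1}$.

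I expect the main obstacle to be exactly this residue accounting near $D$: one must verify that a logarithmic Frobenius lifting interacts with $\nabla_{can}+\tfrac{dF_i^{*}}{p}F^{*}\theta$ so as to preserve simultaneously the nilpotency of the $p$-curvature and the nilpotency of the residue with the correct exponent bound, and that $C$ and $C^{-1}$ exchange these two conditions --- this is what forces the target to be the subcategory $\MIC^{0}_{p-1}$ rather than all of $\MIC_{p-1}$. Away from $D$ the logarithmic and ordinary constructions coincide, so no further work is needed there, and the remaining verifications are a routine transcription of \cite{LSZ0}.
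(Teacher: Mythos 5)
Your overall strategy coincides with the paper's: fix a $W_2$-lifting, work with logarithmic Frobenius liftings on affines, define $C^{-1}$ locally by the exponential twist $\nabla_{can}+\tfrac{dF_i^*}{p}\circ F^*\theta$, glue with the Taylor formula, and invert by twisting a flat bundle by its $p$-curvature before applying Cartier descent. The forward direction is complete in your sketch: the identity $\Res\,\nabla_i=F^*\Res\,\theta$ is exactly how the paper shows that $C^{-1}$ lands in $\MIC^0_{\leq p-1}$.

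The gap is in the converse direction, and you have in fact named it yourself without closing it. After twisting $\nabla$ by $\zeta(\psi_\nabla)$ (where $\zeta=\tfrac{dF_i^*}{p}$ and $\psi_\nabla$ is the $p$-curvature) to kill the $p$-curvature, Cartier descent does not yet apply: a log connection with vanishing $p$-curvature can still have nonzero residue, e.g.\ $d+\lambda\tfrac{dt}{t}$ on $\sO$ with $\lambda\in\F_p$, $\lambda\neq 0$, and such a connection is not a Frobenius pullback with its canonical connection. The paper's Claim 6.2 supplies the missing identity
$$
(\mathrm{id}\otimes\mathrm{res})\circ(\mathrm{id}\otimes\zeta)\circ\psi_\nabla \;=\; -\Res\,\nabla,
$$
so that the twisted connection $\nabla'=\nabla+\zeta(\psi_\nabla)$ has residue $\Res\,\nabla-\Res\,\nabla=0$ and descent becomes available. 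Its proof reduces to one variable with $F^*t=t^p$: since $(t\partial_t)^{p}=t\partial_t$ as differential operators, one has $\psi_{t\partial_t}=\nabla_{t\partial_t}^{p}-\nabla_{t\partial_t}$, whose value at $t=0$ is $(\Res\,\nabla)^p-\Res\,\nabla=-\Res\,\nabla$, using precisely the hypothesis $(\Res\,\nabla)^p=0$. This also corrects your gloss that $(\Res\,\nabla)^p=0$ is ``what is needed for the output to lie in $\HIG_{p-1}$'': its actual role is to force the residue of the twisted connection to vanish so that Cartier descent is possible at all, which is why the equivalence is with $\MIC^0_{\leq p-1}$ rather than with $\MIC_{\leq p-1}$. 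You should add this computation; the rest of your outline is a faithful transcription of the paper's argument.
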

Our argument follows the line of \cite{LSZ0} which is completely elementary. The tor condition first appears in the logarithmic analogue of the Cartier descent theorem due to A. Ogus (see Theorem 1.3.4 \cite{O}), from which we borrow for the following simple reason:
\begin{lemma}\label{cartier descent}
Let $\HIG^0_{\leq 0}(X/k)$ (resp. $\MIC^0_{\leq 0}(X/k)$) be the full subcategory of $\HIG_{\leq 0}(X/k)$ (resp. $\MIC_{\leq 0}(X/k)$) with the tor condition (\ref{tor for higgs}) (resp. \ref{tor for flat}). Then the category $\HIG^0_{\leq 0}(X_{\log}/k)$ is equal to $\HIG^0_{\leq 0}(X/k)$ while the category $\MIC^0_{\leq 0}(X_{\log}/k)$ is equal to $\MIC^0_{\leq 0}(X/k)$. The classical Cartier descent theorem gives an equivalence of categories between $\HIG^0_{\leq 0}(X/k)$ and $\MIC^0_{\leq 0}(X/k)$.
\end{lemma}
\begin{proof}
Since $\omega_{X/k}$ is naturally a subsheaf of $\omega_{X_{\log}/k}$, $\HIG^0_{\leq 0}(X/k)$ (resp. $\MIC^0_{\leq 0}(X/k)$) is a subcategory of $\HIG^0_{\leq 0}(X_{\log}/k)$ (resp. $\MIC^0_{\leq 0}(X_{\log}/k)$). Take an object $(H,\nabla)$ in $\MIC^0_{\leq 0}(X_{\log}/k)$. By the tor condition, one has a short exact sequence
$$
0\to H\otimes \omega_{X/k}\to H\otimes \omega_{X_{\log}/k}\stackrel{id\otimes \textrm{res}}\to \oplus_i H\otimes \sO_{D_i}\to 0.
$$
The residue condition says that $\nabla(H)\subset H\otimes \omega_{X_{\log}/k}$ goes to zero under the morphism $id\otimes \textrm{res}$ and therefore is contained in $H\otimes \omega_{X/k}$ by the exactness of the previous sequence. Thus the logarithmic flat sheaf $(H,\nabla)$ is indeed a flat sheaf on $X$. So
$$
\MIC^0_{\leq 0}(X/k)=\MIC^0_{\leq 0}(X_{\log}/k).
$$
The equality $\HIG^0_{\leq 0}(X/k)=\HIG^0_{\leq 0}(X_{\log}/k)$ is clear. It remains to show the Cartier descent preserves the tor conditions. Let $(E,0)$ in $\HIG_{\leq 0}(X/k)$, and let $(H=F^*E, \nabla_{can})\in \MIC_{\leq 0}(X/k)$ be the corresponding object. Since $X/k$ is smooth, the absolute Frobenius morphism $F$ is flat. By the flat base change for tor, one has the equality of $\sO_{X}$-modules:
$$
F_*\mathcal{T}or_1(H,\sO_{D_i})= \mathcal{T}or_1(E,F_*\sO_{D_i}).
$$
From this, one sees that $(E,0)$ satisfies the tor condition (\ref{tor for higgs}) iff $(H,\nabla)$ satisfies the tor condition (\ref{tor for flat}). This completes the proof.
\end{proof}
We can proceed to the proof of Theorem \ref{equivalence}, whose idea is to reduce the general case to the classical Cartier descent (Lemma \ref{cartier descent}) via an exponential twisting \cite{LSZ0}.
\begin{proof}
Given the explicit exposition of the constructions in \cite{LSZ0} for the case where $D$ is absent, we shall not repeat the whole argument but rather emphasize the new ingredients in the new situation. Fix a $W_2$-lifting $(\tilde X,\tilde D)$ of $(X,D)$, see Definition 8.11 \cite{EV}. Recall that, for an open affine subset $\tilde U\subset \tilde X$, a \emph{log Frobenius lifting} over $\tilde U$, respecting the divisor $\tilde D_U:= \tilde D\cap \tilde U$, is a morphism over the Frobenius on $W_2$
$$
\tilde F_{(\tilde U,\tilde D_U)}: \sO_{\tilde U}\to \sO_{\tilde U}
$$
lifting the absolute Frobenius morphism on $U:=\tilde U\times k$ and satisfying
$$
\tilde F_{(\tilde U,\tilde D_U)}^*\sO_{\tilde U}(-\tilde D_U)=\sO_{\tilde U}(-p\tilde D_U).
$$
Such a lifting exists and two such liftings differ by an element in $F^*\sT_{X_{\log}/k}$ over $U$. See Propositions 9.7 and 9.9 in \cite{EV}. Then, proceeding in the same manner as in Section 2.2 \cite{LSZ0} for the inverse Cartier transform $C^{-1}$, one obtains $(H,\nabla)=C^{-1}(E,\theta)$, which clearly belongs to $\MIC_{\leq p-1}(X_{\log}/k)$. Since $E$ satisfies the tor condition and $H$ is locally $F^*E$, it follows that $H$ also satisfies the tor condition from the proof of Lemma \ref{cartier descent}.  Now we use $\zeta$ to represent the map
$$
\frac{d\tilde F_{(\tilde U,\tilde D_U)}}{p}: F^*\omega_{X_{\log}/k}(U)\to \omega_{X_{\log}/k}(U).
$$
Then locally over $U$, the connection
is given by
$$
\nabla|_{U}=\nabla_{can}+\zeta(F^*\theta|_{U}).
$$
It follows that $\Res_{D_i}\nabla|_{U}=F^*\Res_{D_i}\theta|_{U}$ and therefore $(H,\nabla)\in \MIC^0_{\leq p-1}(X_{\log}/k)$. Conversely, given an object $(H,\nabla)\in \MIC^0_{\leq p-1}(X_{\log}/k)$, one proceeds as Section 2.3 \cite{LSZ0}. In the same way, one shows that the new connection
$$
\nabla'|_{U}=\nabla|_{U}+\zeta(\psi|_{U}),
$$
where $\psi=\psi_{\nabla}: H\to H\otimes F^*\omega_{X_{\log}/k}$ is the $p$-curvature of $\nabla$, has vanishing $p$-curvature. However, there is one new ingredient here, namely, the following.
\begin{claim}\label{new equality}
The following diagram commutes:
\begin{align}\label{comm diagram}
\xymatrix{
H \ar[d]^{} \ar[r]^-{\psi_{\nabla} } &H\otimes F^*\omega_{X_{\log}/k} \ar[r]^-{id\otimes \zeta }& H\otimes \omega_{X_{\log}/k} \ar[d]^{id\otimes \textrm{res}_{D_i}} \\
H\otimes \sO_{D_i} \ar[rr]^{-\Res_{D_i}\nabla} &&H\otimes \sO_{D_i}. }
\end{align}
\end{claim}
Granted the truth of the claim (which is proven below), the residue of $\nabla'|_{U}$ vanishes. In this way, we reduce it to Lemma \ref{cartier descent}.
\end{proof}
Here is the proof of the claim just mentioned above:
\begin{proof}
Let $x\in D_i$ be a closed point and let $\{t_1,\cdots,t_d\}$ be a set of local coordinates for an open affine neighborhood $\tilde U\subset \tilde X$ of $x$, such that $\tilde D_{\tilde U}$ is defined by $\prod_{1\leq j\leq r}t_j=0$. Then $\{\frac{dt_j}{t_j}, dt_l\}_{1\leq j\leq r, r+1\leq l\leq d}$ is a basis for $\Gamma(\tilde U,\omega_{X_2/W_2})$, where $\omega_{X_2/W_2}$ stands for the sheaf $\Omega_{\tilde X}(\log \tilde D)$. Assume moreover that $\tilde{D}_{i,\tilde U}$ is defined by $t_1=0$. Let $R=\Gamma(\tilde U,\sO_{\tilde U})$. Then a log Frobenius lifting over $\tilde U$ is of the following form:
$$
\tilde F(t_j)=t_j^pu_j, \ u_j=1+pa_j, \ 1\leq j\leq r, \quad \tilde F(t_l)=t_l^p+pb_l,  \ r+1\leq l\leq d, \quad  a_j, b_l\in R.
$$
Consider the composite over $U$ (here we write $\zeta_{\tilde F}$ for $\zeta$ to distinguish the choice of $\tilde F$ in the definition):
$$
F^*\omega_{X_{\log}/k}\stackrel{\zeta_{\tilde F}}{\to}\omega_{X_{\log}/k}\stackrel{\textrm{res}_{D_i}}{\longrightarrow}\sO_{D_i}.
$$
We claim that it is independent of the choice of $\tilde F$. Indeed, because
$$
\zeta_{\tilde F}(\frac{dt_j}{t_j})=\frac{dt_j}{t_j}+da_j; \quad \zeta_{\tilde F}(dt_l)=t_l^{p-1}dt_l+db_l,
$$
it follows that $\textrm{res}_{D_i}\circ \zeta_{\tilde F}(\frac{dt_1}{t_1})=1$ and
$$
\textrm{res}_{D_i}\circ \zeta_{\tilde F}(\frac{dt_j}{t_j})=\textrm{res}_{D_i}\circ \zeta_{\tilde F}(dt_l)=0, \quad 2\leq j\leq r, r+1\leq l\leq d.
$$
Thus, we can take the standard log Frobenius lifting $\tilde F(t_i)=t_i^p$ for each $i$ in the following calculation. Using the standard Frobenius lifting, the composite $(id\otimes \zeta)\circ \psi$ is expressed into
$$
\psi=\psi_1\otimes \frac{dt_1}{t_1}+\cdots+\psi_r\otimes \frac{dt_r}{t_r}+t_{r+1}^{p-1}\psi_{r+1}\otimes dt_l+\cdots+t_{d}^{p-1}\psi_{d}\otimes dt_d,
$$
where $\psi_j$ (resp. $\psi_l$) is the endomorphism of $H$ over $U$ defined by $\psi_{1\otimes t_j\partial_{t_j}}$ (resp. $\psi_{1\otimes \partial_{t_l}}$). Similarly, we express the connection over $U$ into
$$
\nabla=\nabla_1\otimes\frac{dt_1}{t_1}+\cdots+\nabla_r\otimes \frac{dt_r}{t_r}+\nabla_{r+1}\otimes dt_l+\cdots+\nabla_{d}\otimes dt_d.
$$
Since we are considering the residue along $D_i$ which is defined by $t_1=0$, to show the commutativity of the diagram (\ref{comm diagram}), it suffices to verify the equality
$$
\psi_1|_{t_1=0}=-\nabla_1|_{t_1=0},
$$
whose verification is formally like the curve case. For simplicity, we write $t$ for $t_1$. By an elementary calculation, one finds $(t\partial_{t})^p=t\partial_{t}$. Thus,
$$
\psi_1=\psi_{1\otimes t\partial_t}=\nabla^p_{t\partial_t}-\nabla_{t\partial_t},
$$
so we conclude that its residue $\psi_1|_{t=0}$ at $\{t=0\}$ equals
$$
\nabla^p_{t\partial_t}|_{t=0}-\nabla_{t\partial_t}|_{t=0}=
(\nabla_{t\partial_t}|_{t=0})^p-\nabla_{t\partial_t}|_{t=0}=-\nabla_{t\partial_t}|_{t=0}
$$
which is just $-\nabla_1|_{t=0}$. Note the second equality in above follows from the assumption $(\Res_{D_i}\nabla)^p=0$. The claim follows.
\end{proof}

{\bf Acknowledgement:} We would like to thank Jie Xia for sending us the manuscript \cite{Xia} and drawing our attention to the work of S. Mochizuki. We thank warmly Adrian Langer for helpful comments on Remark \ref{general inject}. We are grateful to the referee for pointing out several errors in the original manuscript, which include an error in the early construction of the algebraic structure on the periodic cone $K$ and an error in the original formulation of the Ogus-Vologodsky correspondence in the special log case (Theorem \ref{equivalence}), and kindly providing us many valuable comments and suggestions, especially in log geometry. The improvement in the exposition of the work is due to the great efforts made by the referee. We thank him/her sincerely.

\end{document}